%% file: paper.tex
\documentclass[11pt]{article}
\usepackage[a4paper,margin=2.4cm]{geometry}
\usepackage{amsmath,amssymb,amsthm,mathtools}
\usepackage{booktabs,array,enumitem,microtype}
\usepackage{tikz}
\usetikzlibrary{positioning,calc}
\usepackage[hidelinks]{hyperref}
\usepackage{cleveref}
\newtheorem{theorem}{Theorem}[section]
\newtheorem{proposition}[theorem]{Proposition}
\newtheorem{lemma}[theorem]{Lemma}
\newtheorem{corollary}[theorem]{Corollary}
\newtheorem{conjecture}[theorem]{Conjecture}
\theoremstyle{definition}\newtheorem{definition}[theorem]{Definition}
\newtheorem{problem}[theorem]{Problem}
\theoremstyle{remark}\newtheorem{remark}[theorem]{Remark}
\newcommand{\psq}{\psi_{sq}}\newcommand{\pq}{\psi_q}\newcommand{\bii}{\beta_2}
\title{Sub-quorum colorings of graphs}
\author{\begin{tabular}{c}
Haobo Ma$^{1,2}$ \qquad Rafik Sahbi$^3$ \qquad Wenlin Zhang$^{1,4}$\\[6pt]
\small $^1$The Omega Institute\\
\small $^2$ChronoAI Pte Ltd\\
\small $^3$Department of Fundamental Science and Technology\\
\small National Higher School of Advanced Technologies\\
\small B.P. 474, Martyrs Square, Algiers 16001, Algeria\\
\small $^4$National University of Singapore\\
\small 21 Lower Kent Ridge Road, Singapore 119077\\[4pt]
\small \texttt{auric@aelf.io}; \texttt{r.sahbi@g.essa-alger.edu.dz}\\
\small \texttt{e1327962@u.nus.edu}
\end{tabular}}
\date{}
\begin{document}
\maketitle
\begin{abstract}
A sub-quorum coloring is a partial vertex coloring in which every colored vertex sees at least half of its colored closed neighborhood in its own color. Hedetniemi, Hedetniemi, Laskar and Mulder introduced its maximum number of colors, $\psq(G)$, as an open direction in their foundational work on quorum colorings. We establish general bounds, relate $\psq$ to $2$-independence, discuss computational complexity, and determine exact values for several classical families. For rectangular grids $G_{m,n}=P_m\square P_n$, we give a new profile proof of the known dissociation-number formula, equivalent to earlier exact $3$-path vertex-cover results. The proof supplies equality and rigidity information used to establish the same formula for the auxiliary parameter when the representative matching is restricted to one direction. We also obtain a five-sixths inequality for mixed-direction matchings on even-by-even rectangles. Exact transfer certificates establish the sub-quorum coloring formula for all fixed strip widths $2\le m\le11$. For hypercubes, we prove the dimension-free identity $\psq(Q_n)=\bii(Q_n)=2^{n-1}$ for every $n\ge2$. The upper bound for the sub-quorum coloring number follows from Huang's signed adjacency matrix through a restricted energy estimate and an injective linear map. The computer-assisted grid claims use integer arithmetic and are independently reproducible by the accompanying verifier.
\end{abstract}
\noindent\textbf{Keywords:} quorum coloring; sub-quorum coloring; defensive alliance; $2$-independence; matching; grid graph; hypercube; NP-completeness.\\
\textbf{MSC 2020:} 05C15, 05C69, 05C85.

\section{Introduction}
Quorum colorings were introduced by Hedetniemi, Hedetniemi, Laskar and Mulder \cite{HHLM2013}. A partition of $V(G)$ is a quorum coloring when each vertex has a weak majority of its closed neighborhood in its own color class. This coloring language is equivalent to partitioning the graph into defensive alliances, and therefore connects quorum colorings to the extensive theory of alliances in graphs \cite{Fricke2003,HaynesLachniet2007,ErohGera2012}. The original paper established basic structural results, treated several standard graph families (including hypercubes), and ended with a list of open problems. In particular, its Problem 11 introduced the sub-quorum coloring number and asked what could be said about this parameter. A subsequent preprint by Sahbi, Belkina and Bennadji determined exact sub-quorum coloring numbers for some infinite families of caterpillars \cite{SahbiBelkinaBennadji2021}. The present paper develops the parameter in different directions, with particular emphasis on complexity, grids and hypercubes.

A sequence of papers subsequently addressed several of those open questions. Sahbi and Chellali \cite{SahbiChellali2018} settled three of them, including complexity and Nordhaus--Gaddum-type questions. Sahbi \cite{Sahbi2020} strengthened the complexity picture, and \cite{Sahbi2021,SahbiErratum2021} answered four further questions and clarified structural issues. For trees, Sahbi \cite{Sahbi2022} obtained a new sharp lower bound computable in linear time. More recently, linear-time optimal quorum colorings were obtained for a subclass of perfect trees \cite{SahbiBoumalhaIssad2025}, and the exact quorum coloring number of all perfect trees was determined with a linear-time construction \cite{SahbiIssad2026}. Here we return to the sub-quorum variant proposed in \cite{HHLM2013}, which has received much less attention than the original, total-coloring problem.

The alliance viewpoint also explains why quorum-type conditions arise naturally. A color class is locally self-supporting: every member has at least as much support inside its class (counting itself) as outside it. The alliance literature provides several motivations for such local-support conditions, including distributed systems, social and similarity networks, and data clustering \cite{OuazineSlimaniTari2018}. In particular, alliance partitions have been studied as graph models for data clustering: vertices represent data items, adjacency represents sufficient similarity, and a satisfactory cluster requires each item to have at least as much local support inside its own group as outside it \cite{Shafique2004,OuazineSlimaniTari2018}. We use these interpretations only as motivation; all results below are graph-theoretic.

Our aim is to develop the subject from first principles. We first make the paper self-contained by fixing all notation and by recording the local form of the sub-quorum condition. We prove the universal lower bound $\psq(G)\ge\bii(G)$. We give exact values for paths, cycles, complete and multipartite graphs, stars, selected corona and join families. For rectangular grids, the exact $3$-path vertex-cover results of \cite{Bresar2013,JakovacTaranenko2013} already determine $\bii(G_{m,n})$ through $\bii(G)=|V(G)|-\tau_3(G)$. Our new profile proof extracts equality and rigidity information and extends the profile argument to directional representative matchings. We also prove a mixed-direction five-sixths bound and obtain exact sub-quorum values for all strip widths through $11$. Finally, we determine the sub-quorum coloring number of every hypercube, proving $\psq(Q_n)=\bii(Q_n)=2^{n-1}$ for all $n\ge2$ by combining Huang's signed adjacency matrix with a selection argument for arbitrary sub-quorum colorings.

\section{Definitions and notation}
All graphs are finite, simple and undirected. For a graph $G=(V,E)$ and $v\in V$, let $N_G(v)$ and $N_G[v]=N_G(v)\cup\{v\}$ denote the open and closed neighborhoods, and let $d_G(v)=|N_G(v)|$. For $S\subseteq V$, $G[S]$ is the subgraph induced by $S$ and $d_S(v)=|N_G(v)\cap S|$. We write $\Delta(G)$ and $\delta(G)$ for the maximum and minimum degrees. The complement is $\overline G$.

A matching is a set of pairwise vertex-disjoint edges; its maximum size is $\mu(G)$. For $k\ge1$, a set $S\subseteq V$ is \emph{$k$-independent} if $\Delta(G[S])\le k-1$. Its maximum cardinality is $\beta_k(G)$. Thus $\beta_1(G)$ is the ordinary independence number, while a $2$-independent set induces only isolated vertices and edges.

\begin{definition}
A \emph{quorum coloring} of $G$ is a partition $\pi=\{V_1,\ldots,V_k\}$ of $V(G)$ such that, for every $v\in V_i$,
\[
 |N_G[v]\cap V_i|\ge \frac{|N_G[v]|}{2}.
\]
The maximum possible $k$ is the \emph{quorum coloring number} $\pq(G)$.
\end{definition}
Each quorum class is a defensive alliance; equivalently, quorum colorings are alliance partitions \cite{HHLM2013,HaynesLachniet2007}.

\begin{definition}
A \emph{sub-quorum coloring} is an onto partial map $f:V(G)\dashrightarrow\{1,\ldots,k\}$. If $S=\operatorname{dom}(f)$, then every $v\in S$ must satisfy
\[
 |\{u\in N_G[v]\cap S:f(u)=f(v)\}|\ge\frac{|N_G[v]\cap S|}{2}.
\]
The maximum possible number of colors is denoted $\psq(G)$. We write $G_f=G[S]$.
\end{definition}
Thus a sub-quorum coloring of $G$ is exactly a quorum coloring of some induced subgraph $G[S]$, and consequently
\begin{equation}\label{eq:induced}
\psq(G)=\max_{S\subseteq V(G)}\pq(G[S])\ge\pq(G).
\end{equation}

\begin{lemma}[local form]\label{lem:local}
Let $f$ be a sub-quorum coloring with colored set $S$. For $v\in S$, let $a(v)$ be the number of colored neighbors with color $f(v)$ and $b(v)$ the number of colored neighbors with a different color. Then $b(v)\le a(v)+1$. In particular, if $v$ is a singleton color class, then $d_S(v)\le1$.
\end{lemma}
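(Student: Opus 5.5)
The plan is to rewrite the defining inequality of a sub-quorum coloring in terms of $a(v)$ and $b(v)$, then clear the fraction and read off the bound.

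\textbf{Counting both sides.} Fix $v\in S$ and count the two sides of the defining inequality separately.
\begin{itemize}
\item The colored closed neighborhood $N_G[v]\cap S$ consists of $v$ itself, $a(v)$ neighbors of color $f(v)$, and $b(v)$ neighbors of other colors. Hence $|N_G[v]\cap S|=1+a(v)+b(v)$.
\item The vertices of $N_G[v]\cap S$ with color $f(v)$ are $v$ and those $a(v)$ neighbors. Their number is therefore $1+a(v)$.
\end{itemize}

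\textbf{Deriving the inequality.} The sub-quorum condition now reads $1+a(v)\ge (1+a(v)+b(v))/2$. Multiplying by $2$ gives $2+2a(v)\ge 1+a(v)+b(v)$, which simplifies to $b(v)\le a(v)+1$. The steps are reversible, so the two forms of the condition are in fact equivalent.

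\textbf{The singleton case.} If $v$ is the only vertex of its color, then $a(v)=0$, and so $b(v)\le 1$. Every colored neighbor of $v$ has a different color, so $d_S(v)=a(v)+b(v)=b(v)\le1$.

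There is no real obstacle here. The only point needing care is to include $v$ itself on both sides of the count, since it lies in its own closed neighborhood with its own color.
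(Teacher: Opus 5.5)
Your proposal is correct and follows essentially the same route as the paper: the paper's proof is the one-line observation that the quorum inequality reads $2(a(v)+1)\ge a(v)+b(v)+1$, which is equivalent to $b(v)\le a(v)+1$. Your explicit check of the singleton case ($a(v)=0$, so $d_S(v)=a(v)+b(v)=b(v)\le1$) just spells out the step the paper leaves implicit.
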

\begin{proof} The quorum inequality is $2(a(v)+1)\ge a(v)+b(v)+1$, which is equivalent to the assertion. \end{proof}

We shall use the standard fact from \cite{HHLM2013} that $\pq(G)=|V(G)|$ iff $\Delta(G)\le1$. We also record why optimal color classes are connected. Splitting a color class into its induced connected components changes no same-colored adjacency and leaves the colored support fixed, so every local inequality is preserved. A disconnected class would therefore increase the number of colors, contradicting optimality. This applies to both quorum and sub-quorum colorings.

For upper bounds, let $\Omega(G)$ be the maximum of $|T|+|M|$ over matchings $M$ and vertex sets $T$ disjoint from $V(M)$ such that every $t\in T$ has at most one neighbor in $T\cup V(M)$. We call $(T,M)$ a \emph{feasible representative pair}.
\begin{lemma}\label{lem:omega} For every graph $G$, $\psq(G)\le\Omega(G)$. \end{lemma}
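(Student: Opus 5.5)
The plan is to take an optimal sub-quorum coloring, choose one representative from each color class, and check that the representatives form a feasible representative pair $(T,M)$ with $|T|+|M|=\psq(G)$. This gives $\psq(G)\le\Omega(G)$ at once.

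\textbf{Normalising the coloring.} Let $f$ be a sub-quorum coloring attaining $k=\psq(G)$ colors, with colored set $S$ and classes $V_1,\ldots,V_k$. By the remark preceding the definition of $\Omega$, every class of an optimal coloring induces a connected subgraph. Otherwise, splitting a disconnected class into its components would keep every local inequality valid and increase the number of colors, contradicting optimality. So each $G[V_i]$ is connected.

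\textbf{Choosing representatives.} Split the classes by size.
\begin{itemize}
\item If $|V_i|=1$, put its unique vertex into $T$.
\item If $|V_i|\ge2$, connectedness gives an edge $e_i$ with both ends in $V_i$; put $e_i$ into $M$.
\end{itemize}
The classes are pairwise disjoint, so the chosen edges are vertex-disjoint and $M$ is a matching. For the same reason, no vertex of $T$ lies in $V(M)$. Each class contributes exactly one representative, so $|T|+|M|=k$.

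\textbf{Feasibility.} It remains to check that each $t\in T$ has at most one neighbor in $T\cup V(M)$. Since $t$ forms a singleton color class, Lemma~\ref{lem:local} gives $d_S(t)\le1$. Every vertex of $T\cup V(M)$ is colored, so $T\cup V(M)\subseteq S$. Hence $t$ has at most one neighbor in $T\cup V(M)$.

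Therefore $(T,M)$ is a feasible representative pair, and $\psq(G)=k=|T|+|M|\le\Omega(G)$.

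\textbf{Main obstacle.} The only nontrivial step is guaranteeing that a non-singleton class contains an edge. This is exactly where the connectedness of classes in an optimal coloring is used: an arbitrary sub-quorum coloring may have independent classes of size at least two, and those would have no edge to contribute to $M$.
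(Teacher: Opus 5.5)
Your proposal is correct and follows the paper's proof essentially verbatim. You take an optimal coloring with connected classes, put singleton classes into $T$, and choose one edge from each larger class for $M$. Feasibility then follows from Lemma~\ref{lem:local} together with $T\cup V(M)\subseteq S$.
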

\begin{proof}
Take an optimal sub-quorum coloring. Let $T$ consist of its singleton color classes and choose one edge from every other class, forming a matching $M$; such an edge exists because the class is connected. Each $t\in T$ has at most one neighbor in the colored support by Lemma~\ref{lem:local}, and hence at most one in $T\cup V(M)$. Thus $(T,M)$ is feasible and its objective is exactly the number of colors.
\end{proof}

\section{A fundamental lower bound}
\begin{theorem}\label{thm:beta2} For every graph $G$, $\psq(G)\ge\bii(G)$. \end{theorem}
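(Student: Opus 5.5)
Take a maximum $2$-independent set $S$ with $|S|=\bii(G)$ and build a sub-quorum coloring on $S$ with exactly $|S|$ colors. By the definition of $2$-independence, $\Delta(G[S])\le1$, so $G[S]$ consists of isolated vertices and isolated edges. The simplest plan is to color every vertex of $S$ with its own color and leave $V(G)\setminus S$ uncolored. This is the partial map $f$ with $\operatorname{dom}(f)=S$, and it is onto $\{1,\ldots,|S|\}$.

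Next we check the quorum inequality at each $v\in S$. We have $|N_G[v]\cap S|=1+d_S(v)\le 2$, while the set of colored vertices in $N_G[v]$ with color $f(v)$ is $\{v\}$, of size $1$. The inequality $1\ge (1+d_S(v))/2$ holds exactly when $d_S(v)\le1$, and this is guaranteed because $\Delta(G[S])\le 1$. This matches the singleton case of Lemma~\ref{lem:local}. Alternatively, we can invoke \eqref{eq:induced} together with the cited fact that $\pq(H)=|V(H)|$ whenever $\Delta(H)\le1$, applied to $H=G[S]$. This gives $\psq(G)\ge\pq(G[S])=|S|=\bii(G)$ directly.

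There is essentially no obstacle. The only point needing care is that uncolored vertices do not count in the sub-quorum condition, since only $N_G[v]\cap S$ enters it. So neighbors of $v$ outside $S$ are irrelevant, and this is exactly why the partial coloring works even though $G$ itself may have large degrees. I would present the direct verification, then note the one-line equivalent via \eqref{eq:induced}.
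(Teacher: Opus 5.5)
Your proposal is correct and is essentially the paper's proof: give each vertex of a maximum $2$-independent set its own color, leave the rest uncolored, and note that $\Delta(G[S])\le1$ makes every colored closed neighborhood have size at most two, so the vertex itself supplies half of it. Your alternative one-line route through \eqref{eq:induced} and the fact that $\pq(H)=|V(H)|$ when $\Delta(H)\le1$ is also valid.
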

\begin{proof}
Let $S$ be a maximum $2$-independent set. Give every vertex of $S$ its own color and leave $V(G)\setminus S$ uncolored. Since $\Delta(G[S])\le1$, every colored closed neighborhood has size at most two, and the vertex itself supplies at least half of it. Hence the coloring uses $|S|=\bii(G)$ colors.
\end{proof}
This improves the immediate bound $\psq(G)\ge\beta_1(G)$ noted in \cite{HHLM2013}.

\section{Complexity}
Consider \textsc{Sub-Quorum-$K$}: given $G$ and $K\le |V(G)|$, decide whether $\psq(G)\ge K$. A certificate consists of the colored set and its color labels, and all local inequalities can be checked in polynomial time; hence the problem lies in NP. We prove the following strengthening.
\begin{theorem}\label{thm:np} \textnormal{\textsc{Sub-Quorum-$K$}} is NP-complete, even for bipartite graphs. \end{theorem}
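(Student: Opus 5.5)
Membership in NP was already observed above, so the task is NP-hardness on bipartite graphs. I plan to reduce from \textsc{Dissociation Set} (equivalently, deciding $\bii(H)\ge K$). This problem is known to be NP-complete on bipartite graphs, even of maximum degree~$3$, through the $3$-path vertex cover results and Yannakakis' node-deletion theorem. The reduction will build, from a bipartite $H$, a bipartite $G$ for which $\psq(G)$ is an explicit affine function of $\bii(H)$, say $\psq(G)=\bii(H)+c\,|V(H)|$, so that $\psq(G)\ge K+c|V(H)|$ iff $\bii(H)\ge K$.

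The construction I would try first attaches to every vertex $v$ of $H$ a small pendant gadget. One option is a pendant path $v\,x_v\,y_v$; another is several pendant leaves. Either choice keeps $G$ bipartite. The lower bound should come from Theorem~\ref{thm:beta2} together with a direct computation of $\bii(G)$. Take a maximum dissociation set $D$ of $H$, and complete it to a $2$-independent set of $G$ by adding the gadget vertices compatible with $D$. I expect $\bii(G)=\bii(H)+c|V(H)|$, because each gadget contributes exactly $c$ whether or not $v\in D$, but only after choosing the gadget so that this is true. If the gadget is a pendant path and $v\notin D$, the edge $x_vy_v$ contributes $2$. If $v\in D$, we can still take $y_v$, plus $x_v$ when $v$ has no neighbour in $D$. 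Equalizing these two cases may require adjusting the gadget, for instance to two pendant paths, and I would tune it at this stage.

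For the upper bound I would use Lemma~\ref{lem:omega}, $\psq(G)\le\Omega(G)$, and show $\Omega(G)\le\bii(H)+c|V(H)|$. Take an optimal feasible representative pair $(T,M)$. The plan is an exchange argument, performed gadget by gadget, that converts it into a $2$-independent set of $G$ of at least the same size. Every edge of $M$ lying inside a gadget, or joining a gadget to $H$, will be replaced by singleton representatives. The gadget supplies enough private pendant vertices to pay for each matching edge, and the singleton condition (at most one neighbour in $T\cup V(M)$) stays valid for the pendant vertices. Matching edges inside $H$ are already legitimate members of a dissociation structure. The vertices of $H$ that end up in $T\cup V(M)$ must then form a dissociation set of $H$, after possibly deleting one endpoint at a cost that is charged to a gadget vertex gained in exchange.

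The main obstacle is the step just described: controlling matching edges of $M$ inside $H$. A matching edge $uv\in M$ imposes no degree condition on $u$ or $v$ themselves, so $H$-vertices covered by $M$ may have many neighbours in $T\cup V(M)$. A priori, therefore, $\Omega$ can exceed $\bii$ on the $H$-part. This is exactly where the gadget must do the work. The idea is that if $u\in V(M)$ has extra neighbours, then the gadget at $u$ loses its private contribution. I would first try a gadget large enough, for example two pendant $P_2$'s per vertex, that putting $u$ into $V(M)$ forbids at least one gadget vertex. With such a gadget the accounting forces $\Omega(G)=\bii(G)$. If this local charging fails, the fallback is to reduce instead from a restricted case, such as bipartite graphs of girth at least~$6$ and maximum degree~$3$. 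In that case an optimal matching class can be shown to be replaceable directly, because each class has at most $2\Delta$ boundary vertices.
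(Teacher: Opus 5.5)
There is a genuine gap at exactly the step you call the main obstacle, and with pendant gadgets it cannot be closed.

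\textbf{Why the upper bound fails.} Your lower bound needs each gadget $\Gamma_v$ to contribute $c$ also when $v$ is an endpoint of an edge of $D$. In that case no neighbour of $v$ inside $\Gamma_v$ may be selected, so $\Gamma_v$ contains a $2$-independent set $X_v$ of size $c$ that avoids every neighbour of $v$. Now take an \emph{arbitrary} sub-quorum coloring of $H$ and give every vertex of every $X_v$ a private new color. Each such vertex has at most one colored neighbour, lying inside $X_v$. No vertex of $H$ acquires a new colored neighbour. Hence $\psq(G)\ge\psq(H)+c|V(H)|$, and your target identity $\psq(G)=\bii(H)+c|V(H)|$ would force $\psq(H)=\bii(H)$ on every source instance.

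That is false even for subcubic bipartite trees. Take the double star $S_{2,2}$: adjacent centres $u,w$, each with two leaves. Color $\{u,w\}$ as one class and give the four leaves private colors. Each centre has $2$ of the $4$ vertices of its colored closed neighbourhood in its own color, so this is a sub-quorum coloring and $\psq(S_{2,2})=5$. On the other hand, $\bii(S_{2,2})=4$, and disjoint copies make the gap arbitrarily large.

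Conceptually, a pendant gadget interacts with $H$ only through $v$. A vertex in a non-singleton class has a \emph{larger} budget ($b(v)\le a(v)+1$) than a singleton. So a gadget that is neutral towards dissociation-set vertices cannot penalize $u\in V(M)$, which is what your charging step requires. The same example (girth $\infty$, maximum degree $3$) also refutes your fallback claim that an optimal matching class can be replaced directly in that restricted class.

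\textbf{The lower-bound gadgets also fail.} None of the gadgets you list gives the lower-bound identity. A pendant path $v\,x_v\,y_v$ yields $\bii(G)=2|V(H)|$ for every $H$. Similarly, $k\ge2$ pendant leaves yield $k|V(H)|$ and two pendant paths yield $4|V(H)|$. All of these are independent of $\bii(H)$.

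\textbf{How the paper avoids this.} The paper does not pass through $\bii$ at all. It reduces from X3C and deliberately \emph{exploits} the phenomenon above. A selector $s_i$ together with its $L$ private vertices $p_{i,j}$ forms one large class whose $L+1=3R$ same-colored vertices balance $3R$ singleton element copies. The upper bound is then a direct color count. Each private gadget carries at most $2L$ colors, or $2L+1$ if $s_i$ is colored. No singleton copies survive for an element met by two colored selectors. Together these force exactly $q$ pairwise disjoint selected triples.

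Any repair of your route would need a genuine upper bound on $\psq$ of the constructed graph that accounts for non-singleton classes, not merely a computation of $\bii(G)$.
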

\begin{proof}
Membership in NP follows from the preceding certificate argument. For NP-hardness we reduce from \textsc{Exact Cover by 3-Sets} (X3C). Let $(X,\mathcal C)$ be an instance, where $|X|=3q$ and $\mathcal C=\{C_1,\ldots,C_m\}$ with $|C_i|=3$. We may assume $m\ge q$. Put
\[
R=2m+1,\qquad L=3R-1.
\]
For each $C_i$ introduce a selector $s_i$. For $1\le j\le L$ introduce $p_{i,j},a_{i,j},b_{i,j}$ and the edges
$s_ip_{i,j},p_{i,j}a_{i,j},p_{i,j}b_{i,j}$. For every $x\in X$ introduce $R$ independent copies $x^1,\ldots,x^R$, and join $s_i$ to every $x^r$ whenever $x\in C_i$. No other edges are added. The resulting graph $H$ is bipartite. Set
\[
B_0=2Lm+3qR,\qquad K=B_0+q.
\]
If $\mathcal C$ has an exact cover $\mathcal C'$, give each $a_{i,j},b_{i,j}$ and each element-copy $x^r$ a private color. For $C_i\in\mathcal C'$, color $s_i,p_{i,1},\ldots,p_{i,L}$ with one new common color, leaving those vertices uncolored for $C_i\notin\mathcal C'$. Every element-copy then sees exactly one colored selector. A selected selector has $L+1=3R$ same-colored vertices in its closed colored neighborhood and $3R$ differently colored neighbors, while each selected $p_{i,j}$ has exactly two same-colored vertices among four colored vertices in its closed neighborhood. Hence this is a sub-quorum coloring with $K$ colors.

Conversely suppose $H$ has a sub-quorum coloring with at least $K$ colors, and replace it by an optimal one. Let $T$ be the number of colored selectors and, for $x\in X$, let $t_x$ be the number of colored selectors $s_i$ with $x\in C_i$. For the private gadget
\[
P_i=\{s_i\}\cup\{p_{i,j},a_{i,j},b_{i,j}:1\le j\le L\},
\]
at most $2L$ colors occur if $s_i$ is uncolored and at most $2L+1$ if it is colored. Indeed, if $s_i$ is uncolored, each triple $p_{i,j},a_{i,j},b_{i,j}$ contributes at most two colors: three distinct colors would violate the local inequality at $p_{i,j}$. If $s_i$ is colored, count its color once. A triple contributes at most two further colors, since three distinct colors different from that of $s_i$ again leave $p_{i,j}$ with no same-colored neighbor and at least two differently colored neighbors. Summing over triples and then gadgets proves the bound $2Lm+T$; shared colors can only reduce this sum. If $t_x\ge2$, no element-copy of $x$ can be a singleton color class: it would have at least two colored selector-neighbors, contradicting Lemma~\ref{lem:local}. Since the element-copies are pairwise nonadjacent and every optimal color class is connected, such an $x$ contributes no new color supported only on its copies. Any color class containing an element-copy and at least one other vertex must contain a selector adjacent to that copy, so it has already been counted on a private gadget. Therefore the only remaining colors are singleton classes supported on element-copies. For a fixed $x$, there are at most $R$ such colors when $t_x\le1$, and none when $t_x\ge2$. If
\[
c=|\{x\in X:t_x\ge2\}|,
\]
then
\[
\psq(H)\le 2Lm+T+(3q-c)R=B_0+T-cR.
\]
If $c\ge1$, using $T\le m$ and $R=2m+1$ gives $\psq(H)\le B_0+m-R<B_0+q=K$, a contradiction. Hence $c=0$: the $T$ sets represented by colored selectors are pairwise disjoint, so $T\le q$. The same inequality now gives
\[
K\le\psq(H)\le B_0+T\le B_0+q=K,
\]
and therefore $T=q$. The corresponding $q$ disjoint 3-sets contain all $3q$ elements and form an exact cover. The reduction is polynomial and preserves bipartiteness.
\end{proof}

\subsection{Restricted geometric classes: grids and hypercubes}
The preceding reduction should not be read as proving hardness on geometric subclasses. Two natural restricted problems deserve separate attention:
\begin{itemize}
\item \textsc{Grid-Sub-Quorum-$K$}: the input graph is a finite subgraph of a rectangular square grid;
\item \textsc{Cube-Sub-Quorum-$K$}: the input graph is a finite subgraph of some hypercube $Q_d$ (with an embedding, or equivalently the binary labels, included in the input representation).
\end{itemize}
Both problems belong to NP. Their NP-hardness is plausible but is not established in this paper. In particular, Theorem~\ref{thm:np} cannot simply be recycled. Its selector vertices have unbounded degree, whereas square-grid subgraphs have maximum degree at most four. Moreover, whenever two selectors represent sets containing the same element, the construction gives them $R$ common element-copy neighbours. In a hypercube two distinct vertices have at most two common neighbours. Thus the X3C gadget used above is not a subgraph of a grid or of a hypercube once the parameters grow.

This obstruction is useful: any hardness proof for these restricted classes must replace the high-degree selector and replicated-element gadgets by bounded-degree local gadgets. For grids one must additionally preserve planarity and the square-grid geometry. For hypercubes one must respect binary-coordinate adjacency; note also that ``subgraph of a hypercube'' is weaker than ``partial cube'', the latter requiring an isometric embedding. We therefore leave the following two questions open rather than infer them from bipartite NP-completeness.
\begin{problem}\label{prob:gridcomplexity}
Determine the complexity of \textsc{Sub-Quorum-$K$} on finite subgraphs of the square grid.
\end{problem}
\begin{problem}\label{prob:cubecomplexity}
Determine the complexity of \textsc{Sub-Quorum-$K$} on finite subgraphs of hypercubes, and separately on partial cubes.
\end{problem}
For the full families $G_{m,n}$ and $Q_n$ the situation is different. Theorem~\ref{thm:cubeall} determines $\psq(Q_n)$ exactly, whereas the corresponding full-grid formula remains Conjecture~\ref{conj:grid}. Problems~\ref{prob:gridcomplexity}--\ref{prob:cubecomplexity}, by contrast, concern arbitrary subgraphs of those hosts.

\section{Exact values and bounds for classical families}
The following results are useful benchmarks and also show repeatedly that $\bii$ is the correct parameter.
\begin{theorem}\label{thm:pathcycle}
For $n\ge1$, $\psq(P_n)=\bii(P_n)=\lceil2n/3\rceil$. For $n\ge3$, $\psq(C_n)=\bii(C_n)=\lfloor2n/3\rfloor$.
\end{theorem}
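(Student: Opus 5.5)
We prove the two inequalities separately. The lower bound comes from Theorem~\ref{thm:beta2} together with explicit $2$-independent sets. The upper bound comes from Lemma~\ref{lem:omega} together with a local counting argument on feasible representative pairs.

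\emph{Lower bounds.} On $P_n=v_1\cdots v_n$, take all vertices with $i\not\equiv 0\pmod 3$. This induces disjoint copies of $P_2$, plus possibly a final $P_1$, and has exactly $\lceil 2n/3\rceil$ vertices. On $C_n$, delete every third vertex $v_3,v_6,\ldots,v_{3\lceil n/3\rceil}$ and keep the rest. The kept set induces paths with at most two vertices, and its size is $n-\lceil n/3\rceil=\lfloor 2n/3\rfloor$. By Theorem~\ref{thm:beta2}, $\psq\ge\bii$, and both $\bii$ values are at least these quantities.

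\emph{Upper bounds.} By Lemma~\ref{lem:omega} and Theorem~\ref{thm:beta2} it suffices to show $\Omega(P_n)\le\lceil 2n/3\rceil$ and $\Omega(C_n)\le\lfloor 2n/3\rfloor$. We also want $\bii$ bounded by the same quantity; this follows because a $2$-independent set $S$ yields the feasible pair obtained by taking $T=S$ and $M=\emptyset$.

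Fix a feasible pair $(T,M)$ and let $W=T\cup V(M)$. Every vertex of $W$ has at most one neighbour in $W$: for $t\in T$ this is the feasibility condition. For matched vertices it is false in general, but this does not matter, because the objective $|T|+|M|$ counts each matching edge only once. Charge each element of $T$ weight $1$ and each matched vertex weight $1/2$.

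Now examine maximal runs of consecutive vertices of $W$ along the path or cycle. A run of length $\ell$ has total weight at most $\ell/2+1$. Indeed, the $T$-vertices inside a run must sit at its ends, or in a pair $tt'$ forming the entire run, or adjacent to a single matched endpoint. Every uncolored gap vertex separates runs.

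\emph{Main obstacle: the case analysis on runs.} We show that the weight of a run plus one following gap vertex is at most $\tfrac23$ of the number of vertices involved. A run of length $1$ or $2$ with one gap vertex has weight at most $2$ over $3$ vertices. A longer run consists of matched edges with possibly $T$-vertices at its ends. A $T$-vertex at an end of a run must have its only $W$-neighbour be the next vertex, which is fine. A run of length $\ell\ge3$ has weight at most $(\ell-2)/2+2=\ell/2+1$ over $\ell+1$ vertices, and
\[
\frac{\ell/2+1}{\ell+1}\le\frac23 \quad\text{iff}\quad \ell\ge 2 .
\]
Summing over runs gives $|T|+|M|\le 2n/3$ for the cycle, and hence at most $\lfloor 2n/3\rfloor$ by integrality. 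The case where $W$ is the whole cycle gives weight $n/2\le 2n/3$. For the path the last run may lack a following gap vertex, which costs at most one extra unit of rounding; this yields $\lceil 2n/3\rceil$.

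The delicate point is verifying the weight bound $\ell/2+1$ for runs, that is, that no interior run vertex can lie in $T$. An interior $T$-vertex would have two $W$-neighbours, which feasibility forbids. The remaining step is to handle the small cases $n\le 3$ directly.
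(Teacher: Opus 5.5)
Your proof is correct, and it takes a genuinely different route from the paper's.

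The paper argues directly on colorings. In $P_n$, the middle vertex of three consecutive vertices carrying three distinct colors violates the quorum inequality, so every block of three consecutive vertices carries at most two colors. In $C_n$, the paper either deletes an uncolored vertex and uses $\lceil 2(n-1)/3\rceil=\lfloor 2n/3\rfloor$, or, when every vertex is colored, notes that each vertex needs a same-colored neighbour, giving at most $\lfloor n/2\rfloor$ colors.

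You instead pass to the relaxation $\Omega$ through Lemma~\ref{lem:omega} and run a weighting argument over maximal runs of $W=T\cup V(M)$. The paper's route is shorter and self-contained: it needs neither the connectivity of optimal classes built into Lemma~\ref{lem:omega} nor fractional weights and rounding. Yours proves the stronger statements $\Omega(P_n)=\lceil 2n/3\rceil$ and $\Omega(C_n)=\lfloor 2n/3\rfloor$. That is, the representative-pair relaxation is already tight on paths and cycles, the same kind of statement the paper later proves for strips and one-direction matchings on grids. Your argument also treats paths and cycles with a single mechanism.

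Some details need tightening.

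\emph{Path rounding.} ``At most one extra unit'' is too weak as written, since $\lfloor 2n/3\rfloor+1>\lceil 2n/3\rceil$ when $3\mid n$. Instead, append an uncolored phantom vertex $v_{n+1}$; the pair stays feasible in $P_{n+1}$, and now every run has a following gap. This gives $|T|+|M|\le 2(n+1)/3$, and integrality yields $|T|+|M|\le\lfloor(2n+2)/3\rfloor=\lceil 2n/3\rceil$.

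\emph{Cycle construction.} The indices $v_{3i}$ must be read modulo $n$. Read literally, for $n\not\equiv0\pmod 3$ you delete only $\lfloor n/3\rfloor$ vertices, and the kept set contains a path on at least three vertices through the edge $v_nv_1$.

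\emph{Minor points.} Delete the opening claim that every vertex of $W$ has at most one $W$-neighbour; you retract it immediately. A run of length one has weight at most $1$ on two vertices, not $2$ on three. When $W=V(C_n)$, record that $T=\varnothing$, because every vertex then has two $W$-neighbours. With these fixes, no separate treatment of $n\le 3$ is needed.
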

\begin{proof}
Write $P_n=v_1v_2\cdots v_n$. In any sub-quorum coloring, three consecutive vertices cannot all be colored with three distinct colors: if $v_i,v_{i+1},v_{i+2}$ had distinct colors, then $v_{i+1}$ would see only one vertex of its own color in a colored closed neighborhood of size three. Partitioning the path into consecutive blocks of three, with a final block of size one or two, therefore gives
\[
 \psq(P_n)\le \left\lceil\frac{2n}{3}\right\rceil.
\]
Conversely, color exactly the vertices $v_i$ with $i\not\equiv0\pmod3$, assigning a different color to every selected vertex. Each selected vertex has at most one selected neighbor, so the selected set is $2$-independent and Lemma~\ref{lem:local} shows that this is a sub-quorum coloring. Its size is $\lceil2n/3\rceil$, proving the path formula and the equality with $\bii(P_n)$.

Now let $n\ge3$. The periodic construction on a cycle gives a $2$-independent set of size $\lfloor2n/3\rfloor$, hence the same lower bound for $\psq(C_n)$. For the converse, take an optimal sub-quorum coloring. If some vertex is uncolored, deleting it produces a path $P_{n-1}$ carrying the same colors, and hence
\[
 \psq(C_n)\le \psq(P_{n-1})=\left\lceil\frac{2(n-1)}3\right\rceil=\left\lfloor\frac{2n}3\right\rfloor.
\]
If all vertices are colored, every vertex has a colored closed neighborhood of size three. Hence the quorum condition forces every vertex to have at least one neighbour of its own color. Every monochromatic component on the cycle therefore contains at least two vertices, so the number of colors is at most $\lfloor n/2\rfloor\le\lfloor2n/3\rfloor$. This proves the required upper bound without any decoloring argument. Thus $\psq(C_n)=\lfloor2n/3\rfloor$. The same periodic construction and the standard three-consecutive-vertices bound give $\bii(C_n)=\lfloor2n/3\rfloor$.
\end{proof}

We now turn to rectangular grid graphs. The first construction is periodic and will also provide the lower bounds used for the exact strip results below. Dissociation sets are complements of vertex sets meeting every three-vertex path, so $\bii(G)=|V(G)|-\tau_3(G)$ for the minimum $3$-path vertex-cover number $\tau_3$. The exact grid values in the path-cover literature \cite{Bresar2013,JakovacTaranenko2013} therefore give Theorem~\ref{thm:gridbeta}. We supply a new row-profile proof because its equality and rigidity information is needed in the subsequent directional matching argument.

For later use set
\begin{equation}\label{eq:Fmn}
F(m,n)=
\max\left\{
\left\lceil\frac m2\right\rceil\left\lceil\frac{2n}3\right\rceil+
\left\lfloor\frac m2\right\rfloor\left\lfloor\frac n3\right\rfloor,
\left\lceil\frac n2\right\rceil\left\lceil\frac{2m}3\right\rceil+
\left\lfloor\frac n2\right\rfloor\left\lfloor\frac m3\right\rfloor
\right\}.
\end{equation}

\begin{theorem}[Equivalent form of the grid $3$-path-cover formula {\cite{Bresar2013,JakovacTaranenko2013}}]\label{thm:gridbeta}
For all integers $m,n\ge1$,
\[
\boxed{\bii(G_{m,n})=F(m,n).}
\]
Consequently,
\[
\psq(G_{m,n})\ge F(m,n).
\]
\end{theorem}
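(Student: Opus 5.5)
We prove the two inequalities $\bii(G_{m,n})\ge F(m,n)$ and $\bii(G_{m,n})\le F(m,n)$ separately. The final claim $\psq(G_{m,n})\ge F(m,n)$ then follows at once from Theorem~\ref{thm:beta2}. Both $F$ and $\bii$ are invariant under the transpose $(m,n)\mapsto(n,m)$, so for each bound we may work with whichever orientation is convenient.

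\emph{Lower bound.} Index the rows $1,\dots,m$ and the columns $1,\dots,n$.
\begin{itemize}
\item In each odd row, select the vertices in columns $j\not\equiv 0\pmod 3$. This gives $\lceil 2n/3\rceil$ vertices, as in the path construction of Theorem~\ref{thm:pathcycle}.
\item In each even row, select the vertices in columns $j\equiv 0\pmod 3$. This gives $\lfloor n/3\rfloor$ vertices.
\item In an odd row every selected vertex has at most one selected horizontal neighbour. Its vertical neighbours lie in even rows and sit in columns $\equiv 0$, so they are unselected.
\item In an even row the selected vertices are horizontally isolated, and their vertical neighbours lie in columns $\equiv0$ of odd rows, which are unselected.
\end{itemize}
Hence the selected set is $2$-independent, of size $\lceil m/2\rceil\lceil 2n/3\rceil+\lfloor m/2\rfloor\lfloor n/3\rfloor$. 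Transposing the construction gives the other term of $F$.

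\emph{Upper bound (row-profile argument).} Let $S$ be a dissociation set and let $s_i=|S\cap\text{row } i|$. By Theorem~\ref{thm:pathcycle}, $s_i\le\lceil 2n/3\rceil$.

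The key step is a two-row inequality. For consecutive rows $i,i+1$, we aim to show
\[
s_i+s_{i+1}\le \left\lceil\tfrac{2n}{3}\right\rceil+\left\lfloor\tfrac{n}{3}\right\rfloor,
\]
or, where that fails because $n$ is small relative to $m$, to use the column version instead.

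To prove the two-row inequality, partition the $2\times n$ strip into $2\times3$ blocks plus a remainder. In a $2\times 3$ block a dissociation set has at most $3$ vertices, since $\bii(G_{2,3})=3$. Blocks of this kind give roughly $n$ vertices per two rows, which is exactly $\lceil 2n/3\rceil+\lfloor n/3\rfloor$ up to the boundary terms. The boundary columns then have to be handled by a case analysis on $n\bmod 3$.

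Summing over the pairs of rows $(1,2),(3,4),\dots$, and adding one extra row of size at most $\lceil 2n/3\rceil$ when $m$ is odd, gives the first term of $F$.

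The difficulty is that the pairing argument is tight in only one orientation. For example, when $n\equiv 1\pmod 3$ the two-row bound can be lossy, and the true maximum may come from the column pairing. I would therefore refine the argument into a profile inequality of the form
\[
s_{i-1}+2s_i+s_{i+1}\le \dots
\]
or a discharging argument over $3\times 2$ and $2\times 3$ windows, chosen according to which term attains the maximum. The rigidity claim (equality forces the periodic pattern) would come from tracking when each local inequality is tight.

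The main obstacle I expect is precisely this boundary/residue case analysis: proving a single upper bound that matches $\max$ of the two expressions in every residue class of $(m\bmod 6,\,n\bmod 3)$. The first thing I would try is a weighted count over $2\times 3$ windows with boundary corrections. If that leaves a gap in some residue classes, I would fall back on the known $\tau_3$ formula of \cite{Bresar2013,JakovacTaranenko2013} through $\bii=|V|-\tau_3$.
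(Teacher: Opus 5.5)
\textbf{There is a genuine gap.} Your lower bound is the paper's construction and is fine. The upper bound, however, rests on a false local fact: $\bii(G_{2,3})=4$, not $3$. Take both vertices of columns $1$ and $3$, as in Theorem~\ref{thm:ladder}.

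Since $\lceil 2n/3\rceil+\lfloor n/3\rfloor=n$ for every $n$, your two-row inequality is simply $s_i+s_{i+1}\le n$. It behaves differently by parity of $n$:
\begin{itemize}
\item For even $n$ it is true, but the right tiling is by $2\times2$ squares, each holding at most two selected vertices. The $2\times3$ blocks give only $4n/3$.
\item For every odd $n$ it is false. Selecting both vertices of every odd column of a ladder gives $n+1$.
\end{itemize}
So the real case split is by parity, not by residues of $(m\bmod 6,\,n\bmod 3)$. If one of $m,n$ is even, transpose so that rows have even length. Then pair rows and add one path row $\lceil 2n/3\rceil$ if the number of rows is odd; this gives exactly $F(m,n)$. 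The case where $m$ and $n$ are both odd cannot be reached by transposition. There the only valid two-row bound is $s_i+s_{i+1}\le n+1$, and it overshoots: for $m=n=5$ it gives $16$, whereas $F(5,5)=14$.

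The missing idea is \emph{rigidity} of the exceptional ladder, followed by a sharp global summation.
\begin{itemize}
\item A two-row strip with odd $n$ holds $n+1$ vertices only in the pattern where both vertices of every odd column are selected and every even column is empty. Hence $s_i+s_{i+1}\le n$ unless $s_i=s_{i+1}=(n+1)/2$.
\item Three consecutive rows cannot all have count $(n+1)/2$. Otherwise both overlapping ladders are rigid, which puts three vertically consecutive selected vertices in an odd column.
\item These local facts still do not sum directly, because exceptional pairs can interleave with lossy rows.
\end{itemize}
The paper handles the summation by encoding rows as odd integers $a_i=2s_i-n\le g(n)$, where $g(k)=k-2\lfloor k/3\rfloor$. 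It proves the sharp bound $\sum_i a_i\le\max\{g(m),g(n)\}$ for odd $m$, using a level-set decomposition $I_h=\{i:a_i\ge 2h+1\}$ together with the path-neighbourhood defect lemma.

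Your proposal contains neither the rigidity statement nor any summation argument of this kind. The unspecified inequality $s_{i-1}+2s_i+s_{i+1}\le\cdots$ and the ``window discharging'' are placeholders, not arguments. As written, the proposal therefore does not prove $\bii(G_{m,n})\le F(m,n)$ when $m$ and $n$ are both odd. Falling back on the cited $3$-path vertex-cover formula would establish the statement, since the theorem is attributed to that literature. But the proposed profile argument would then contribute nothing.
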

\begin{proof}
The lower bound is given by the periodic construction
\[
S=\{v_{i,j}: i\text{ odd and }j\not\equiv0\pmod3\}\cup
  \{v_{i,j}: i\text{ even and }j\equiv0\pmod3\},
\]
and its transpose.  Every selected vertex has at most one selected horizontal neighbor, while adjacent rows select complementary residue classes modulo $3$; hence $\Delta(G_{m,n}[S])\le1$.  The two orientations have the two cardinalities occurring in \eqref{eq:Fmn}, so $\bii(G_{m,n})\ge F(m,n)$.

For the reverse inequality, Appendix~\ref{app:profiles} proves the rigid equality case on a two-row ladder and the integer-level row-profile bound, including all parity cases. Applied to the row counts of $S$, Proposition~\ref{prop:grid-profile-bound} gives $|S|\le F(m,n)$. Hence equality holds, and Theorem~\ref{thm:beta2} gives the sub-quorum lower bound. The reusable ingredients are Lemmas~\ref{lem:path-defect} and~\ref{lem:diss-ladder}: their equality conditions allow the same profile method to control directional representative matchings in Proposition~\ref{prop:directional-profile}.
\end{proof}

\begin{figure}[ht]
\centering
\begin{tikzpicture}[x=0.62cm,y=0.62cm]
\foreach \i in {1,...,6}{\foreach \j in {1,...,9}{
  \ifnum\i<6\draw[gray!55] (\j,-\i)--(\j,-\the\numexpr\i+1\relax);\fi
  \ifnum\j<9\draw[gray!55] (\j,-\i)--(\the\numexpr\j+1\relax,-\i);\fi
}}
\foreach \i in {1,...,6}{\foreach \j in {1,...,9}{
  \pgfmathtruncatemacro{\r}{mod(\j,3)}
  \pgfmathtruncatemacro{\parity}{mod(\i,2)}
  \ifnum\parity=1
    \ifnum\r=0 \node[circle,draw,fill=white,inner sep=1.7pt] at (\j,-\i) {};\else \node[circle,draw,fill=black,inner sep=2.1pt] at (\j,-\i) {};\fi
  \else
    \ifnum\r=0 \node[circle,draw,fill=black,inner sep=2.1pt] at (\j,-\i) {};\else \node[circle,draw,fill=white,inner sep=1.7pt] at (\j,-\i) {};\fi
  \fi
}}
\end{tikzpicture}
\caption{The periodic dissociation-set construction on $G_{6,9}$ attaining Theorem~\ref{thm:gridbeta}. Filled vertices are colored, each with its own color; unfilled vertices are left uncolored. Every colored vertex has at most one colored neighbor.}
\label{fig:gridperiodic}
\end{figure}

The ladder case admits a purely combinatorial proof and is therefore separated from the computer-assisted strip calculations.
\begin{theorem}[Ladder grids]\label{thm:ladder}
For every integer $n\ge1$,
\[
\psq(G_{2,n})=\bii(G_{2,n})=2\left\lceil\frac n2\right\rceil.
\]
\end{theorem}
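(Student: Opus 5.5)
For the lower bound I would simply evaluate \eqref{eq:Fmn} at $m=2$. The first term is $\lceil 2n/3\rceil+\lfloor n/3\rfloor$, and the second is $\lceil n/2\rceil\lceil 4/3\rceil+\lfloor n/2\rfloor\lfloor 2/3\rfloor=2\lceil n/2\rceil$. A short parity check modulo $6$ shows the second term is never smaller than the first. An explicit witness is the set of both vertices of every odd-indexed column. Each vertex of it has exactly one selected neighbour, its column partner, since adjacent columns alternate selected and unselected. Theorems~\ref{thm:gridbeta} and~\ref{thm:beta2} then give $\psq(G_{2,n})\ge\bii(G_{2,n})\ge 2\lceil n/2\rceil$. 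It remains to prove $\psq(G_{2,n})\le 2\lceil n/2\rceil$, which also forces equality for $\bii$.

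For the upper bound I would use Lemma~\ref{lem:omega} and bound $\Omega(G_{2,n})$ by discharging. Fix a feasible representative pair $(T,M)$. Give each $t\in T$ weight $1$ and each endpoint of an edge of $M$ weight $1/2$, so that the total weight equals $|T|+|M|$. I would then cut the ladder into the $2\times2$ blocks formed by columns $2k-1,2k$, plus a single final column when $n$ is odd, and aim to show that every block carries weight at most $2$. For a lone final column this is immediate: two vertices, and if both lie in $T$ they are each other's unique neighbour, giving weight exactly $2$.

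Inside a $2\times2$ block every vertex has two neighbours within the block, so a vertex of $T$ can have at most one of them in $T\cup V(M)$. Consequently, if all four vertices lie in $T\cup V(M)$, none lies in $T$ and the weight is $4\cdot\tfrac12=2$. With at most two vertices in $T\cup V(M)$ the weight is at most $2$ trivially. The only bad local configuration has three occupied vertices forming a path $t\,u\,t'$ with $t,t'\in T$ and $u\in V(M)$, of weight $5/2$. Here $u$'s matching partner cannot be $t$ or $t'$, so it must lie in the adjacent column outside the block. Moreover $t$ and $t'$ have already used their unique allowed neighbour $u$, so their horizontal neighbours outside the block are unoccupied.

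\textbf{Main obstacle.} This case needs a discharging rule: the excess $1/2$ must be shown to be absorbed by the neighbouring block containing $u$'s partner $u'$. In that block, the vertex sharing $u'$'s column and adjacent to $t$ or $t'$ is empty, so that block has at most three occupied vertices, one of them $u'$ of weight $1/2$. I would first check directly that this forces that block's weight to be at most $3/2$. Then I would verify that a single block cannot be the receiver of two such transfers, one from each side, except in configurations that violate feasibility. This finite case analysis on two consecutive blocks, together with the boundary blocks at the two ends (including the odd-$n$ final column as a possible receiver), is the step I expect to require the most care. It yields $\Omega(G_{2,n})\le 2\lceil n/2\rceil$.
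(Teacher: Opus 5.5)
Your lower bound, the reduction to $\Omega(G_{2,n})$ via Lemma~\ref{lem:omega}, and your classification of blocks are correct: every block has weight at most $2$ except the $5/2$ configuration $t\,u\,t'$. The gap is in the step you defer to a direct check. It is false, so one-step absorption does not close the argument: the block containing $u'$ can already have weight exactly $2$.

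Take $n=6$ with coordinates (row, column) and the feasible pair
\[
\begin{array}{cccccc}
T&P&P&P&P&T\\
B&B&T&B&T&B
\end{array}
\qquad M=\{(1,2)(1,3),\ (1,4)(1,5)\}.
\]
Each $T$ vertex has exactly one occupied neighbour, and the objective is $4+2=6$. The blocks $\{1,2\},\{3,4\},\{5,6\}$ have weights $3/2,\,2,\,5/2$. The bad block $\{5,6\}$ points to $u'=(1,4)$, and $(2,4)$ is indeed blank. Yet block $\{3,4\}$ has weight $2$: its occupied vertices form the path $(2,3)(1,3)(1,4)$ with ends $T$ and $P$. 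So it cannot absorb the extra $1/2$, and the slack sits one block further on. The analogous pattern for $n=2k$ gives block weights $3/2,2,\ldots,2,5/2$, so the excess lands $k-1$ blocks away. It uses row $1$ equal to $TP\cdots PT$, edges $(1,2j)(1,2j+1)$, and a row-$2$ $T$ at columns $3,5,\ldots,2k-1$.

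Your second planned check also fails. Moving the $T$ from $(2,3)$ to $(2,2)$ gives a feasible pair in which both $\{1,2\}$ and $\{5,6\}$ are bad and both send into $\{3,4\}$. This is harmless, since that receiver has weight $1$, but it is not infeasible.

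The missing idea is that the half-unit must be \emph{relayed}, so the sending rule has to depend on column type, not on block weight. Treat a final odd column as its own block. Let every column consisting of a $T$ vertex and a $P$ vertex whose partner lies outside its block send $1/2$ to the partner's block. The horizontal neighbour of that $T$ is then forced blank. Consequently:
\begin{enumerate}
\item a block sends at most once, and before sending it has weight at most $5/2$;
\item every receiving column is of the form $\{P,B\}$;
\item a receiver's other column is a sending column (the block starts at weight $2$ and passes on what it gets), another receiving column (the block starts at weight $1$ and gets $1$), or has weight at most $1$.
\end{enumerate}
Every $5/2$ block is a sender, since its $u$-column is such a column. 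Hence all final weights are at most $2$.

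This repaired discharging is essentially the paper's mechanism. The paper counts by columns, using $2(|T|+|M|)-2n=2(a-b)+u-v$. It injects each $TP$ column into its adjacent $PB$ partner column to get $u\le v$, and bounds $TT$ against $BB$ columns with Lemma~\ref{lem:path-defect}. Your fixed $2\times2$ pairing handles $TT/BB$ for free, but it misaligns with the $TP$--$PB$ pairs, and that misalignment is exactly where the chains come from.
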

\begin{proof}
The lower bound is immediate: select both vertices in every odd column and leave every even column uncolored. The selected subgraph is a disjoint union of vertical edges, hence has maximum degree one. Assigning a distinct color to every selected vertex gives $2\lceil n/2\rceil$ colors.

For the upper bound, apply Lemma~\ref{lem:omega} and label the vertices of a feasible pair by $T$, $P=V(M)$, and $B=V\setminus(T\cup P)$. Let $a,b,u,v$ count columns of types $TT,BB,TP,PB$, respectively, allowing either order in the last two types. Then
\[
2(|T|+|M|)-2n=|T|-|B|=2(a-b)+u-v.
\]
The $P$ vertex of a $TP$ column must be matched horizontally. Its partner column has type $PB$, since the $T$ vertex already has its vertical occupied neighbor. This sends $TP$ columns injectively to $PB$ columns, so $u\le v$. Every column adjacent to a $TT$ column is $BB$. The path-neighborhood bound in Lemma~\ref{lem:path-defect} therefore gives $a\le b$, except when $n$ is odd and all odd columns are $TT$ and all even columns are $BB$. In that exception $a=b+1$ and $u=v=0$. Consequently $|T|+|M|\le n$ for even $n$ and at most $n+1$ for odd $n$, proving the desired upper bound. The dissociation construction attains it.
\end{proof}

\begin{figure}[ht]
\centering
\begin{tikzpicture}[x=0.72cm,y=0.72cm]
\foreach \j in {1,...,9}{
 \ifnum\j<9\draw[gray!55] (\j,0)--(\the\numexpr\j+1\relax,0);\draw[gray!55] (\j,-1)--(\the\numexpr\j+1\relax,-1);\fi
 \draw[gray!55] (\j,0)--(\j,-1);
 \pgfmathtruncatemacro{\parity}{mod(\j,2)}
 \ifnum\parity=1
   \node[circle,draw,fill=black,inner sep=2.2pt] at (\j,0) {};
   \node[circle,draw,fill=black,inner sep=2.2pt] at (\j,-1) {};
 \else
   \node[circle,draw,fill=white,inner sep=1.7pt] at (\j,0) {};
   \node[circle,draw,fill=white,inner sep=1.7pt] at (\j,-1) {};
 \fi
}
\end{tikzpicture}
\caption{An optimal sub-quorum coloring of $G_{2,9}$ with $10$ colors. Each filled vertex receives a distinct color; the colored induced subgraph is a disjoint union of five edges.}
\label{fig:ladder}
\end{figure}

For a grid $G_{m,n}$, let $\Omega_H(G_{m,n})$ and $\Omega_V(G_{m,n})$ denote the variants of $\Omega$ in which the matching $M$ is restricted to horizontal and vertical edges, respectively.

\begin{theorem}[One-direction representative matchings]\label{thm:directionalomega}
For all integers $m,n\ge1$,
\[
\boxed{\Omega_H(G_{m,n})=\Omega_V(G_{m,n})=F(m,n).}
\]
\end{theorem}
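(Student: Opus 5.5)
By transposition, $\Omega_V(G_{m,n})=\Omega_H(G_{n,m})$, and $F$ is symmetric in $m$ and $n$. It therefore suffices to prove $\Omega_H(G_{m,n})=F(m,n)$ for all $m,n\ge1$.

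\emph{Lower bound.} Take a maximum dissociation set $S$ from the periodic construction of Theorem~\ref{thm:gridbeta}, in either orientation. Put all of $S$ into $T$ and let $M=\emptyset$. Each $t\in T$ then has at most one neighbor in $T\cup V(M)=S$, so $(T,M)$ is feasible and the empty matching is trivially horizontal. Hence $\Omega_H\ge\bii=F(m,n)$.

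\emph{Upper bound.} Given a feasible pair $(T,M)$ with $M$ horizontal, I would reduce to a row-profile statement of the kind handled by Proposition~\ref{prop:grid-profile-bound}. In each row $i$, let $t_i$ be the number of $T$-vertices and $e_i$ the number of $M$-edges in that row; row $i$ then contributes $t_i+e_i$ to the objective.

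Within a single row, a horizontal edge of $M$ occupies two consecutive positions. Each $T$-vertex in the row has at most one occupied neighbor in total, so along the row the occupied set $T\cup V(M)$ has no three consecutive occupied positions with the middle one in $T$. This is weaker than a dissociation condition. However, an $M$-edge counts only once in the objective even though it occupies two positions, so its weight per position is $1/2$. The plan is therefore to weight $T$-vertices by $1$ and matched vertices by $1/2$, and to show that this weighted row count obeys the same single-row bound as a dissociation set. That bound is Lemma~\ref{lem:path-defect}, which gives at most $\lceil 2n/3\rceil$ per row, with the defect measured by the occupied neighborhood.

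The interaction between rows is controlled by Lemma~\ref{lem:diss-ladder}, applied to consecutive row pairs. A $T$-vertex adjacent vertically to an occupied vertex uses up its one allowed neighbor, so its horizontal neighbors must be unoccupied. A matched vertex, by contrast, imposes no constraint on its own vertical neighbors. I would argue, pair by pair, that the weighted count of two adjacent rows is at most $\lceil 2n/3\rceil+\lfloor n/3\rfloor$. The intended mechanism is to compare a horizontal edge $uv\in M$ with the set obtained by keeping $u$ as a $T$-like vertex and deleting $v$: this costs the same weight and does not worsen feasibility for the other row.

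Summing these pair bounds over rows, with the parity cases handled exactly as in Proposition~\ref{prop:grid-profile-bound}, gives the first expression in \eqref{eq:Fmn}. I still have to check that the profile argument, applied in the transposed direction where the matching is vertical in the transposed grid, yields the second expression, so that the overall bound is their maximum $F(m,n)$.

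The main obstacle is this weighted ladder step. Matched vertices can be vertically adjacent to occupied vertices of neighboring rows without any penalty, so the naive reduction of replacing each edge by one endpoint may break the dissociation property across rows. Here I would use the equality and rigidity information in Lemmas~\ref{lem:path-defect} and~\ref{lem:diss-ladder}. Any configuration that beats the two-row bound would have to be an equality case of the row lemma in both rows. The rigidity of that equality case, namely the periodic pattern with a complementary residue in the adjacent row, should leave no room for a horizontal edge without creating a $T$-vertex with two occupied neighbors. I would first try to prove the ladder inequality directly by a column-type count like the one in the proof of Theorem~\ref{thm:ladder}, with an injection from $TP$-type configurations to $PB$-type configurations.
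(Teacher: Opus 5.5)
\textbf{There is a genuine gap in the two-row step.}

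Your lower bound and the transposition reduction are fine. The single-row bound $c_i\le\lceil 2n/3\rceil$ also holds, via the promotion you describe: delete $v$, promote $u$, and you obtain a dissociation set of $P_n$. Note that Lemma~\ref{lem:path-defect} is not what gives this bound.

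The problem is your target $c_i+c_{i+1}\le\lceil 2n/3\rceil+\lfloor n/3\rfloor=n$, which is false for odd $n$. In $G_{2,n}$, put both vertices of every odd column into $T$ and take $M=\varnothing$. This is a feasible pair (its matching is trivially horizontal) with objective $n+1$. Likewise, consecutive rows of the transposed periodic construction have counts $(n+1)/2,(n+1)/2$. So no rigidity argument can rule out configurations that beat your pair bound.

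The correct ladder statement is Lemma~\ref{lem:closed-ladder}: $c_i+c_{i+1}\le 2\lceil n/2\rceil$, and for odd $n$ equality forces $M=\varnothing$ together with the alternating $TT/BB$ column pattern. The rigidity that matters is this ladder equality case, not the $\lceil 2n/3\rceil$ row equality case you invoke. Your proposed $TP\to PB$ column injection is the right tool to prove it.

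The same error underlies the step you flagged as unchecked. A row argument yielding only the first expression in \eqref{eq:Fmn} would contradict the lower bound whenever $n$ is odd and $m$ is even, since that expression is then $mn/2<F(m,n)$. The second expression also cannot be recovered ``in the transposed direction''. Two upper bounds combine to their minimum, not their maximum. Moreover, after transposition your matching becomes vertical, so restricting to rows would cut matching edges. The row argument must deliver the whole maximum by itself.

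The missing ingredients are these:
\begin{itemize}
\item The ladder rigidity shows that no three successive rows can all have $c_i=(n+1)/2$, since that would put three vertically consecutive $T$ vertices in an odd column.
\item Consequently the numbers $a_i=2c_i-n$ are odd integers at most $g(n)$ that satisfy the hypotheses of Lemma~\ref{lem:odd-profile}.
\item That lemma gives $2\sum_i c_i-mn\le\max\{(m\bmod 2)g(n),(n\bmod 2)g(m)\}$, which equals $2F(m,n)-mn$ by \eqref{eq:parity-defect}.
\end{itemize}
For even $n$, your pairing plan works as stated once $c_i+c_{i+1}\le n$ is proved.
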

\begin{proof}
For a horizontal matching define $c_i=|T\cap\text{row }i|+|M\cap\text{row }i|$, so $\sum_i c_i=|T|+|M|$. Restrictions to one or two adjacent rows cut no matching edge and remain feasible. Proposition~\ref{prop:directional-profile} proves that these counts satisfy the same profile inequalities as dissociation sets and gives $\sum_i c_i\le F(m,n)$, with every parity case included. The periodic construction has $M=\varnothing$ and attains $F(m,n)$. Transposition proves the vertical statement.
\end{proof}

An immediate consequence is that any feasible representative pair with objective value $F(m,n)+d$ must use both matching directions in an essential way: at least $d$ selected matching edges are horizontal and at least $d$ are vertical. Indeed, deleting all vertical matching edges and their endpoints preserves feasibility and decreases the objective by exactly their number. The horizontal bound then gives the vertical-edge count, and transposition gives the other count.

\begin{theorem}[Five-sixths bound on even rectangles]\label{thm:fivesixths}
Let $m,n$ be positive even integers.  For every feasible representative pair $(T,M)$ in $G_{m,n}$,
\[
\boxed{|T|+\frac56|M|\le\frac{mn}{2}.}
\]
If
\[
q=|T|+|M|-\frac{mn}{2},
\]
then
\[
\frac{mn}{2}-|T|\ge5q,\qquad |M|\ge6q.
\]
In particular, any sub-quorum coloring with $F(m,n)+d=mn/2+d$ colors, $d\ge1$, has at least $6d$ non-singleton color classes.
\end{theorem}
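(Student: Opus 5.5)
The first step is to reduce the boxed inequality to a counting statement about the three vertex types. Write $P=V(M)$ and $B=V(G_{m,n})\setminus(T\cup P)$, as in the proof of Theorem~\ref{thm:ladder}. Then $mn=|T|+2|M|+|B|$, and
\[
\frac{mn}{2}-|T|-\frac56|M|=\frac12|B|-\frac12|T|+\frac16|M|.
\]
So the boxed inequality is equivalent to
\[
3|T|\le 3|B|+|M|.
\]
I would prove this by discharging. Every vertex of $B$ starts with charge $3$, every matching edge starts with charge $1$, and every vertex of $T$ must end with charge at least $3$.

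The consequences are pure algebra and I would record them first. From $|T|+\frac56|M|\le mn/2$ we get $q=|T|+|M|-mn/2\le |M|/6$, hence $|M|\ge6q$. We also get $mn/2-|T|\ge \frac56|M|\ge5q$. For the last sentence, take an optimal coloring with $mn/2+d$ colors. The proof of Lemma~\ref{lem:omega} yields a feasible pair with $|T|+|M|$ equal to the number of colors and $|M|$ equal to the number of non-singleton classes. This gives $q=d$ and therefore at least $6d$ non-singleton classes. Here $F(m,n)=mn/2$ for $m,n$ even, which follows from \eqref{eq:Fmn}.

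For the main inequality, I would exploit evenness by partitioning $G_{m,n}$ into $2\times2$ blocks. This is exactly where the parity hypothesis enters: the periodic construction of Theorem~\ref{thm:gridbeta} shows the bound is tight with $M=\varnothing$. Hence a boundary-free local accounting is needed, and an even tiling provides one. The local structure inside a block is rigid. A block contains at most two $T$-vertices, since three would force the middle one to have two occupied neighbours. If a block contains two diagonal $T$-vertices, then at most one of the other two block vertices is occupied. If it contains two adjacent $T$-vertices, those two have used up their single occupied neighbour, so all their other neighbours, inside and outside the block, lie in $B$. I would classify the block types by their $(T,P,B)$-patterns, splitting each matching edge's charge $1$ equally between its two endpoints. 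Blocks with at least as much $B$-charge as $T$-demand would be settled internally. The deficient types, such as two diagonal $T$'s plus one $P$, or one $T$ with three $P$'s, would draw the missing charge from adjacent blocks. Those adjacent blocks are forced to contain $B$-vertices, because every $T$-neighbour of a $T$-vertex beyond the first is forbidden.

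The main obstacle is designing the transfer rules so that no $B$-vertex is overdrawn. A $B$-vertex can neighbour up to four $T$-vertices, and horizontal and vertical matching edges interact across block boundaries. My first attempt is a rule in which each $T$-vertex claims charge $1$ from each of its $B$-neighbours along the grid edges. A deficient $T$-vertex with an occupied neighbour additionally collects the $\frac12$-share of its adjacent matching edge, and from there I would track the worst-case $B$-vertices. If the pure vertex rule fails, I would fall back on the row-pair profile method of Appendix~\ref{app:profiles}. There, the ladder equality case of Lemma~\ref{lem:diss-ladder} on each pair of rows $(2k-1,2k)$ gives a per-pair inequality with a $\frac56$ weight on edges. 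The rigidity information would then rule out the tight ladder pattern whenever a vertical matching edge crosses between row pairs.
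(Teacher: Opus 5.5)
Your reformulation is correct and matches the paper's target. With $D=mn/2-|T|$ one has $3|B|+|M|-3|T|=D-5q$, so $3|T|\le3|B|+|M|$ is exactly the inequality $D\ge5q$ of Proposition~\ref{prop:absorption-count}, and your derivation of the consequences is right.

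For the last sentence, do not pass to an optimal coloring: the claim concerns an arbitrary coloring with $mn/2+d$ colors, whose classes may be disconnected. Choose one edge from each class containing an edge and one vertex from each edge-free class. Lemma~\ref{lem:local} makes this pair feasible with $q=d$ and $|M|$ at most the number of non-singleton classes.

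\textbf{The main gap.} The inequality itself is never proved. No transfer rule is fixed or checked, and that is the whole theorem.

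Under your charges the only deficient block is a saturated one with two diagonal $T$'s, one $P$ and one $B$, with deficit $5/2$. Your example ``one $T$ with three $P$'s'' cannot occur, since that $T$ would have two occupied neighbours. The deficit is not local: the block receiving the outgoing matching edge can be a zero-capacity leaf $\begin{pmatrix}T&c\\B&s\end{pmatrix}$ with surplus only $1$, and its endpoint $c$ is matched onward. So debt travels along chains of matching edges between non-saturated tiles.

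The paper's proof is an analysis of these chains, via the residual multigraph and port routing of Appendix~\ref{app:absorption}. In its notation the balance is $3|B|+|M|-3|T|=6R+K-5C$. A leaf-to-leaf chain of length $L$ carries $L-1$ capacity units (worth $6$ each) and at least two attachments, against a cost of $5L$. It is therefore guaranteed to pay for itself only when $L\ge4$. The paper then shows:
\begin{itemize}
\item there are no chains of length one (Lemma~\ref{lem:leaf-exclusion});
\item length-two chains are forks that can be left at balance $-1$ even after counting their internal edge, and they are paid by an off-chain companion tile with unused capacity, shared by at most $r_S$ forks (Lemmas~\ref{lem:fork} and~\ref{lem:fork-compensation});
\item length-three chains contain an internal matching edge or a slack-paid tile, with injective charges (Lemmas~\ref{lem:escape}--\ref{lem:short-charges}).
\end{itemize}
A rule of the form ``each $T$ claims from its $B$-neighbours'' does not see this structure.

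\textbf{The fallback also fails.} The rigidity in Lemmas~\ref{lem:diss-ladder} and~\ref{lem:closed-ladder} holds only for odd ladder length. Here $n$ is even, and $|T|=n$ on a row pair is attained, for example by a checkerboard, which tolerates crossing vertical edges.

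In $G_{4,n}$ put $T=\{(3,j):j\text{ odd}\}\cup\{(4,j):j\text{ even}\}$ and $M=\{\{(2,2),(3,2)\}\}$, with everything else blank. This pair is feasible, and rows $3,4$ carry $n$ $T$-vertices plus the endpoint of an edge crossing upward. The mirror image puts $T$ at $(2,j)$ for odd $j$ and $(1,j)$ for even $j$, with the same edge. It is also feasible, and rows $1,2$ now carry $n$ $T$-vertices plus an endpoint crossing downward.

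Consequently, any per-row-pair inequality that splits a crossing edge's weight $\tfrac56$ between the two pairs in a fixed way fails on one of these two examples. The weight would have to be shifted between row pairs according to the configuration, which is the same unsolved transfer problem.
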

\begin{proof}
Appendix~\ref{app:absorption} gives the residual geometry and the full counting argument. Write $D=mn/2-|T|$. Its selected port routing has $a$ leaf-to-leaf, $b$ slack-to-slack, and $c$ leaf-to-slack paths, with $q=a-b$. Let $f_2,f_3$ count the leaf-to-leaf paths of lengths two and three. The geometric lemmas give $f_2\le b$ and an injective charge $f_2+f_3\le h+z$, where $h$ counts internal matching edges and $z\le c$ counts distinct immediate slack paths. Keeping both kinds of charge, Proposition~\ref{prop:absorption-count} proves
\[
D\ge5q+6b-f_2+2c-z\ge5q+5b+c\ge5q.
\]
Since $|M|=D+q$, this is equivalent to $|T|+\frac56|M|\le mn/2$ and also gives $|M|\ge6q$. For an arbitrary coloring, choose one edge from every class containing an edge and one vertex from every edge-free class. A selected edge-free representative has no same-colored neighbor, so Lemma~\ref{lem:local} makes this a feasible pair of objective equal to the color count. Its matching size is at most the number of non-singleton classes, proving the final assertion.
\end{proof}

\begin{remark}\label{rem:fivesixths}
Theorem~\ref{thm:fivesixths} rules out an objective above $F(m,n)=mn/2$ for every feasible pair with $|M|\le5$ or $|T|\ge mn/2-4$, since positive integral $q$ would force $|M|\ge6$ and $D\ge5$. Proposition~\ref{prop:fork-normalization} removes every residual capacity-one fork by an objective-preserving promotion or matching flip. Thus any counterexample to Conjecture~\ref{conj:grid} on an even-by-even rectangle must be genuinely mixed-directional and highly matching-rich.
\end{remark}

The next theorem gives exact transfer certificates for strip widths $3$ through $11$.
\begin{theorem}[Computer-assisted exact grid strips]\label{thm:gridstrips}
For every $n\ge1$,
\[
\begin{aligned}
\psq(G_{3,n})=\bii(G_{3,n})&=\left\lceil\frac{5n}{3}\right\rceil,\\
\psq(G_{4,n})=\bii(G_{4,n})&=\begin{cases}2n,&n\text{ even},\\2n+1,&n\text{ odd},\end{cases}\\
\psq(G_{5,n})=\bii(G_{5,n})&=\max\left\{3\left\lceil\frac{2n}{3}\right\rceil+2\left\lfloor\frac n3\right\rfloor,
4\left\lceil\frac n2\right\rceil+\left\lfloor\frac n2\right\rfloor\right\},\\
\psq(G_{6,n})=\bii(G_{6,n})&=\begin{cases}3n,&n\text{ even},\\3n+1,&n\text{ odd},\end{cases}\\
\psq(G_{7,n})=\bii(G_{7,n})&=\max\left\{4\left\lceil\frac{2n}{3}\right\rceil+3\left\lfloor\frac n3\right\rfloor,
5\left\lceil\frac n2\right\rceil+2\left\lfloor\frac n2\right\rfloor\right\}.
\end{aligned}
\]
For each $m\in\{8,9,10,11\}$,
\[
\boxed{\psq(G_{m,n})=\bii(G_{m,n})=\Omega(G_{m,n})=F(m,n).}
\]
\end{theorem}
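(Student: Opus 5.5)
The lower bounds are already in hand: Theorem~\ref{thm:gridbeta} gives $\psq(G_{m,n})\ge\bii(G_{m,n})=F(m,n)$ for every $m,n$, and for $m=3,\dots,7$ one checks directly that $F(m,n)$ coincides with the displayed closed forms. For instance, when $m=3$ the first term of $F$ is $2\lceil 2n/3\rceil+\lfloor n/3\rfloor=\lceil 5n/3\rceil$, and this dominates the transposed term. It therefore suffices to prove $\Omega(G_{m,n})\le F(m,n)$ for $3\le m\le 11$ and all $n$. Lemma~\ref{lem:omega} then closes the chain
\[
F(m,n)\le\bii\le\psq\le\Omega\le F(m,n),
\]
which gives the boxed equalities for $m\in\{8,\dots,11\}$ and the stated formulas for $m\le7$.

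For fixed $m$, I would encode feasible representative pairs $(T,M)$ column by column. The state of a column records, for each of its $m$ cells, a label from $\{T,P_{\mathrm{v}},P_{\mathrm{h}}^{\to},P_{\mathrm{h}}^{\leftarrow},B\}$ together with the number of occupied neighbours already seen by each $T$ cell. Here $P_{\mathrm{v}}$ marks a vertical matching partner inside the column, and $P_{\mathrm{h}}^{\to}$ or $P_{\mathrm{h}}^{\leftarrow}$ marks a horizontal partner to the right or to the left. Feasibility is local, since every $T$ cell may have at most one neighbour in $T\cup V(M)$. This yields a finite transfer digraph whose arcs carry the integer weight (new $T$ cells) $+$ (matching edges completed). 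Then $\Omega(G_{m,n})$ is the maximum weight of a walk of length $n$ from a valid boundary state to a valid boundary state.

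The certificate I would aim for is an integer potential $\phi$ on the states satisfying
\[
w(s\to s')\le \alpha+\phi(s)-\phi(s')
\]
for every arc, where $\alpha$ is the slope of $F$ in $n$. A periodic correction indexed by $n \bmod 6$ handles the floor and ceiling terms, and the boundary potentials are bounded so that the resulting constants match $F(m,n)$ exactly. Checking these finitely many inequalities in integer arithmetic proves $\Omega\le F$ for all $n$ at once. Small $n$, below the period at which the potential argument becomes effective, would be handled by direct enumeration. This is the role of the accompanying verifier.

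The main obstacle is constructing the potential. The state space grows roughly like $5^m$ times the adjacency bookkeeping, and a single linear potential may fail because the optimum of $F$ switches between the two orientations: the second term dominates for small $n$ relative to $m$. My first attempt would be to reduce the state space by symmetry (top–bottom reflection) and by pruning dominated states. I would then compute $\phi$ as a max-plus eigenvector via value iteration on the reduced digraph, possibly over a $6$-step power of the transfer operator, and round it to integers. Where the switching regime breaks the single-slope certificate, I would fall back on exhaustive enumeration for $n\le n_0(m)$ together with the asymptotic certificate for $n> n_0(m)$.
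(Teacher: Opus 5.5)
Your reduction is the same as the paper's. You take the lower bound from Theorem~\ref{thm:gridbeta}, the upper bound through Lemma~\ref{lem:omega}, and compute $\Omega(G_{m,n})$ by a column transfer. Your check that $F(m,n)$ gives the displayed closed forms for $3\le m\le7$ is also right.

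The certificate is where you diverge, and as first stated it cannot close for odd $m\ge5$. Sum a potential inequality along the whole walk from the empty boundary state, even with time-periodic potentials of any period $p$. You get $\Omega(G_{m,n})\le\alpha n+c_{n\bmod p}$, and since this must hold at small $n$, the constants must absorb the regime where the transposed term of $F$ dominates. For $m=5$ you have $\alpha=8/3$ and $\Omega(G_{5,1})=F(5,1)=4$, which forces $c_{1\bmod p}\ge4/3$. But $F(5,n)=8n/3+1/3$ for every $n\equiv1\pmod 3$ with $n\ge4$. Infinitely many such $n$ lie in the class of $1$ modulo $p$, and there your bound exceeds $F$ by one (e.g.\ $20$ versus $19$ at $n=7$). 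Enumerating the scalars $\Omega(G_{m,n})$ for $n\le n_0$ does not repair this.

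The missing step is to anchor the potential at the exact frontier vector beyond the switch. Take $\phi$ to be a potential for the $p$-step transfer. This is also what produces the parity term for even $m$, which no single time-independent potential can give, and it makes $p\alpha$ integral for odd $m$. Then use
\[
\Omega(G_{m,n_0+r+kp})\le\max_s\bigl(V_{n_0+r}(s)+\phi(s)\bigr)-\min_{t\ \text{terminal}}\phi(t)+kp\alpha\qquad(0\le r<p).
\]
A tight $\phi$ of this kind exists, for instance one built from the backward value functions. With this anchoring your route is sound.

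The paper needs no potential at all. It iterates the max-plus transfer $A_m$ from the empty state, one vertex at a time with six frontier labels. It then certifies the exact identity $V^{(m)}_{n_0+p}=V^{(m)}_{n_0}+\Delta$ of complete vectors and reachable supports. Homogeneity $A_m(V+c)=A_m(V)+c$ propagates this to every $n\ge n_0$, so $\Omega$ is determined exactly. Finite readouts for $n<n_0+p$, together with $F(m,n+p)=F(m,n)+\Delta/2$ for $n\ge n_0$ (checked via \eqref{eq:parity-defect}), then close the induction.

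Your anchored-potential version is the dual of this. It certifies only the upper bound, but it would survive even if suboptimal states never became exactly periodic, and its check is one pass over arcs rather than equality of two large vectors. On the other hand, you must still compute $V_{n_0+r}$ and then search for $\phi$, which is essentially the same iteration. Checking every arc of a whole-column transfer at $m=11$ is also far more costly than the paper's vertex-by-vertex frontier.
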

\begin{proof}
Theorem~\ref{thm:gridbeta} gives the displayed quantities as exact dissociation numbers and hence as lower bounds for $\psq$. For the reverse inequalities it is enough, by Lemma~\ref{lem:omega}, to maximize $\Omega(G_{m,n})$.

For fixed height $m$, process the grid one column at a time, visiting its vertices in order. The supplied verifier uses six frontier labels: unused; a $T$ vertex with zero or one already exposed occupied neighbor; a matched endpoint; and a matching endpoint awaiting its partner in either of the two unprocessed directions. A new occupied vertex consumes an available neighbor allowance at each adjacent frontier $T$ vertex. An awaiting endpoint forces its partner, and two simultaneous requests at one vertex are forbidden. Otherwise a vertex can be unused, can enter $T$ when at most one exposed neighbor is occupied, or can start a matching edge in an available unprocessed direction. Give a $T$ vertex weight two and each matching endpoint weight one. These transitions enumerate exactly the feasible partial pairs: every legal construction determines its labels, and conversely each transition preserves the local degree and matching conditions.

Let $V_n^{(m)}$ be the vector of best doubled weights at the full frontier after $n$ columns, with value $-\infty$ for unreachable states. Initially only the empty state has value zero. Terminal states have no awaiting matching endpoint; maximizing their weights and dividing by two gives $\Omega(G_{m,n})$. The time-independent one-column transition $A_m$ is max-plus linear, so
\[
A_m(V+c)=A_m(V)+c.
\]
The archived certificates in Table~\ref{tab:strip-certificates} verify equality of the complete indexed vectors
\[
V_{n_0+p}^{(m)}=V_{n_0}^{(m)}+\Delta,
\]
including equality of their reachable supports. Applying $A_m$ repeatedly proves the same translation for every $n\ge n_0$, hence
$\Omega(G_{m,n+p})=\Omega(G_{m,n})+\Delta/2$.

\begin{table}[ht]
\centering
\begin{tabular}{rrrrr}
\toprule
Width $m$ & Start $n_0$ & Period $p$ & Increment $\Delta$ & Reachable states\\
\midrule
3 & 3 & 3 & 10 & 65\\
4 & 3 & 2 & 8 & 236\\
5 & 9 & 3 & 16 & 856\\
6 & 6 & 2 & 12 & 3105\\
7 & 10 & 3 & 22 & 11263\\
8 & 9 & 2 & 16 & 40855\\
9 & 15 & 3 & 28 & 148196\\
10 & 12 & 2 & 20 & 537561\\
11 & 16 & 3 & 34 & 1949930\\
\bottomrule
\end{tabular}
\caption{Exact full-vector certificates in the supplied frontier encoding. State counts refer to the reachable support at the two certificate endpoints; increments use doubled weights.}
\label{tab:strip-certificates}
\end{table}

The finite readouts verify $\Omega(G_{m,n})=F(m,n)$ for $1\le n<n_0+p$. Direct substitution in \eqref{eq:Fmn} gives $F(m,n+p)=F(m,n)+\Delta/2$ for $n\ge n_0$ in each row of the table. To check this, use \eqref{eq:parity-defect}: for even $m$, the parity of $n$ repeats after two; for odd $m$, $g(n+3)=g(n)+1$ and $g(n)\ge\lfloor n/3\rfloor\ge g(m)$ from each listed start. Induction therefore proves the equality for every positive length. The lower constructions and Lemma~\ref{lem:omega} finish the proof.

The two independent integer implementations reconstruct both endpoint vectors from the empty state. Section~\ref{sec:reproduction} identifies the versioned source, data and replay commands. State counts depend on the frontier encoding; Table~\ref{tab:strip-certificates} uses the encoding of that specific reproducibility package throughout.
\end{proof}

\begin{figure}[ht]
\centering
\begin{tikzpicture}[x=0.42cm,y=0.42cm]
\foreach \m/\xoff in {3/0,4/8,5/16,6/24,7/32}{
  \node at (\xoff+3.5,1.2) {$G_{\m,6}$};
  \foreach \i in {1,...,\m}{\foreach \j in {1,...,6}{
    \ifnum\i<\m\draw[gray!45] (\xoff+\j,-\i)--(\xoff+\j,-\the\numexpr\i+1\relax);\fi
    \ifnum\j<6\draw[gray!45] (\xoff+\j,-\i)--(\xoff+\the\numexpr\j+1\relax,-\i);\fi
    \pgfmathtruncatemacro{\r}{mod(\j,3)}
    \pgfmathtruncatemacro{\parity}{mod(\i,2)}
    \ifnum\parity=1
      \ifnum\r=0 \node[circle,draw,fill=white,inner sep=1.05pt] at (\xoff+\j,-\i) {};\else \node[circle,draw,fill=black,inner sep=1.35pt] at (\xoff+\j,-\i) {};\fi
    \else
      \ifnum\r=0 \node[circle,draw,fill=black,inner sep=1.35pt] at (\xoff+\j,-\i) {};\else \node[circle,draw,fill=white,inner sep=1.05pt] at (\xoff+\j,-\i) {};\fi
    \fi
  }}
}
\end{tikzpicture}
\caption{Optimal sub-quorum colorings furnished by Theorem~\ref{thm:gridbeta} for $G_{m,6}$, $3\le m\le7$. Each filled vertex receives a distinct color. The numbers of colors are respectively $10,12,16,18,22$, exactly the values in Theorem~\ref{thm:gridstrips}.}
\label{fig:gridstrips}
\end{figure}

The exact results for heights $2$ through $11$, together with Theorems~\ref{thm:gridbeta}, \ref{thm:directionalomega}, and~\ref{thm:fivesixths}, lead to the following remaining full-grid conjecture.

\begin{conjecture}[Rectangular grids]\label{conj:grid}
For all integers $m,n\ge2$,
\[
\boxed{\psq(G_{m,n})=\bii(G_{m,n})=F(m,n).}
\]
Equivalently, in view of Theorem~\ref{thm:gridbeta}, the only remaining equality is
\[
\psq(G_{m,n})\le F(m,n).
\]
\end{conjecture}

We next record exact values for complete and complete multipartite graphs.
\begin{proposition}\label{prop:complete}
For every $n\ge2$, $\psq(K_n)=2$.
\end{proposition}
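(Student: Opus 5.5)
The plan is to prove a lower bound and a matching upper bound separately.

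For the lower bound $\psq(K_n)\ge2$, I will use Theorem~\ref{thm:beta2}. Any two vertices of $K_n$ induce a single edge, so $\bii(K_n)\ge2$. Concretely, color two vertices with distinct colors and leave the rest uncolored. Each colored vertex then has a colored closed neighborhood of size two, of which it supplies half. This uses $n\ge2$.

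For the upper bound, take any sub-quorum coloring $f$ with colored set $S$, write $s=|S|$, and suppose it uses $k\ge3$ colors. Because $K_n[S]$ is complete, every colored vertex $v$ has colored closed neighborhood exactly $S$. The quorum inequality therefore says that the class of $f(v)$ has size at least $s/2$, and this holds for every nonempty class. Summing over the $k$ classes gives $s\ge k\cdot s/2$, which forces $k\le2$ whenever $s>0$.

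Equivalently, one can argue through Lemma~\ref{lem:local}. Take $v$ in a smallest class, of size $c\le s/k$. Then $a(v)=c-1$ and $b(v)=s-c$, so the lemma requires $s-c\le c$, that is, $s\le 2c\le 2s/k$, and hence $k\le2$.

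Nothing here is a serious obstacle. The only point to state carefully is that the colored closed neighborhood of every colored vertex is the whole of $S$, which holds precisely because the graph is complete. Combining the two bounds gives $\psq(K_n)=2$.
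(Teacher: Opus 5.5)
Your proof is correct and follows the same route as the paper: the colored set induces a complete graph, a quorum coloring of a complete graph has at most two classes, and two vertices with distinct colors attain the bound. The only difference is that you prove the ``at most two classes'' fact directly, by noting that every class has at least $s/2$ vertices, whereas the paper cites this bound from Hedetniemi et al.\ and applies it through \eqref{eq:induced}.
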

\begin{proof}
Every nonempty induced subgraph of $K_n$ is a complete graph. Hedetniemi et al.~\cite{HHLM2013} proved that the quorum coloring number of a complete graph is at most two, while an induced $K_2$ has quorum coloring number two. Equation~\eqref{eq:induced} therefore gives $\psq(K_n)=2$.
\end{proof}

\begin{proposition}\label{prop:star}
For every $n\ge2$, $\psq(K_{1,n})=n$.
\end{proposition}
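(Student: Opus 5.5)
The lower bound comes from Theorem~\ref{thm:beta2}. The $n$ leaves of $K_{1,n}$ form an independent set, hence a $2$-independent set, so $\psq(K_{1,n})\ge\bii(K_{1,n})\ge n$. Concretely, we color every leaf with its own color and leave the center $c$ uncolored; every colored vertex then has colored closed neighborhood $\{v\}$.

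For the upper bound we take an optimal sub-quorum coloring $f$ with colored set $S$ and split into two cases according to whether the center is colored.

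\emph{Center uncolored.} Then $S$ consists of leaves only, and at most $n$ colors can occur.

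\emph{Center colored.} Let $k$ be the number of colors, and let $a$ and $b$ be the numbers of colored leaves with and without the color $f(c)$. By Lemma~\ref{lem:local} at $c$, we have $b\le a+1$. Every color other than $f(c)$ appears only on leaves, so $k\le 1+b\le a+2$. Also $a+b\le n$, which gives $k\le 1+b\le 1+\min(a+1,\,n-a)$.
\begin{itemize}
\item If $a\ge1$, then $k\le 1+(n-a)\le n$.
\item If $a=0$, then $b\le1$, so $k\le2\le n$ because $n\ge2$.
\end{itemize}
The only delicate point is this case $a=0$, and it is exactly where the hypothesis $n\ge2$ is used. In both cases $\psq(K_{1,n})\le n$, which together with the lower bound proves $\psq(K_{1,n})=n$.
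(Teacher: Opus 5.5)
Your proof is correct. Both arguments rest on Lemma~\ref{lem:local} applied at the center, but you decompose the upper bound differently.

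The paper uses only the vertex count. Since $K_{1,n}$ has $n+1$ vertices, a coloring with more than $n$ colors must color every vertex with its own color. The center is then a singleton class with $n\ge2$ colored neighbors, which contradicts the singleton clause of Lemma~\ref{lem:local}. So only this one configuration has to be excluded.

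You instead bound every coloring in which the center is colored, obtaining $k\le 1+\min\{a+1,n-a\}\le 1+\lfloor (n+1)/2\rfloor$. This is more work than the proposition needs, but it proves more. For $n\ge4$ the bound is strictly below $n$, so no optimal coloring colors the center. Hence coloring all leaves with distinct colors is the unique optimum up to relabeling. For $n\le3$, your inequality shows exactly which optima with a colored center can occur.
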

\begin{proof}
The $n$ leaves form an independent set, so $\psq(K_{1,n})\ge n$. A sub-quorum coloring with $n+1$ colors would make every vertex a singleton-colored vertex; the center would then have $n$ colored neighbors of other colors and would violate Lemma~\ref{lem:local}. Hence $\psq(K_{1,n})\le n$.
\end{proof}

\begin{theorem}\label{thm:multipartite}
Let $p\ge2$ and $1\le n_1\le\cdots\le n_p$. Then
\begin{equation}\label{eq:multipartite}
\psq(K_{n_1,\ldots,n_p})=\max\{2,n_p\}.
\end{equation}
In particular, the value is $n_p$ whenever $n_p\ge2$.
\end{theorem}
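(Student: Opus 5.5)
The plan is to prove the two inequalities separately: a construction for the lower bound, and an analysis of how many parts the colored set can meet for the upper bound.

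\emph{Lower bound.} Write $G=K_{n_1,\ldots,n_p}$ with parts $V_1,\ldots,V_p$ and $|V_i|=n_i$. The largest part $V_p$ is independent, so coloring each of its vertices with its own color is a sub-quorum coloring; this is Theorem~\ref{thm:beta2} with $\beta_1\le\bii$, and it gives $\psq(G)\ge n_p$. Any edge $uv$ colored with two distinct colors is a sub-quorum coloring, since $\Delta(G[\{u,v\}])=1$, so $\psq(G)\ge2$. Hence $\psq(G)\ge\max\{2,n_p\}$.

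\emph{Upper bound.} Fix an optimal sub-quorum coloring with colored set $S$. The argument splits according to how many parts $S$ meets.

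If $S$ meets only one part, then $G[S]$ is edgeless, and at most $n_p$ colors occur.

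Otherwise $G[S]$ is the complete multipartite graph on the nonempty sets $S_i=S\cap V_i$. By \eqref{eq:induced} it suffices to show that a complete multipartite graph with at least two nonempty parts has quorum coloring number at most $\max\{2,n_p\}$. Color classes of an optimal coloring are connected, so every color class is either a single vertex or meets at least two parts. Let $s=|S|$ and, for $v\in S_i$, note that $|N[v]\cap S|=s-|S_i|+1$. For a class $C$ and a vertex $v\in C\cap S_i$, the same-colored part of the closed neighborhood is $|C|-|C\cap S_i|+1$. The quorum condition therefore reads
\[
2\bigl(|C|-|C\cap S_i|\bigr)+2\ge s-|S_i|+1 .
\]
I will sum this over the classes, or use the vertex in the part with the least loss. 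If there are $k\ge3$ classes, some class $C$ has $|C|\le s/3$. Choosing $v\in C$ with $|C\cap S_i|$ maximal, the inequality forces $|S_i|$ to be large: roughly $|S_i|\ge s-2|C|+1\ge s/3+1$.

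The remaining work is to turn this into $k\le|S_j|$ for some part $j$. The cleanest way I see is a direct dichotomy on singleton classes.

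\emph{Case 1: some singleton class $\{v\}$ exists.} By Lemma~\ref{lem:local}, $d_S(v)\le1$. So $S\setminus S_i$ is a single vertex $w$, where $v\in S_i$. Then every vertex of $S_i$ is adjacent only to $w$. Each other class is either a singleton or contains $w$, so at most one class is non-singleton. Moreover $w$ must have a same-colored neighbor unless $|S_i|\le1$. It follows that the number of colors is at most $|S_i|\le n_p$, or at most $2$ when $|S_i|=1$.

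\emph{Case 2: no singleton classes.} Each class contains an edge between two parts. I then need to show $k\le2$. Take two classes $C,D$ and a vertex $v$ in the smallest class, which lies in part $i$. Its different-colored neighbors number at least $(s-|S_i|)-(|C|-|C\cap S_i|)$. Using the inequality above for the vertices of $C$ lying in at least two different parts, and adding the two resulting inequalities, should give $|C|\ge s/2$ roughly, which forces $k\le2$.

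\emph{Expected main obstacle.} The hardest step is Case 2, where the summed inequality must be made tight. If it fails, I would instead use the fact that $C$ meets two parts $i\ne j$ and add the quorum inequalities at one vertex of $C\cap S_i$ and one vertex of $C\cap S_j$. Since $|S_i|+|S_j|\le s$, this yields $4|C|\ge s+\ldots$, bounding $k$ by about $4$. The residual small cases would then be excluded by hand.
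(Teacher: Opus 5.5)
\textbf{Case~2 rests on a false claim.} Your lower bound and your Case~1 (a singleton class forces $G[S]$ to be a star with centre $w$, so at most $\max\{2,|S_i|\}$ colors) are correct. In Case~2, however, you set out to prove that a coloring with no singleton class has $k\le2$, and this is false. Take $G=K_{3,3}$ itself and color the three edges of a perfect matching with three distinct colors. Every vertex then has a closed colored neighborhood of size $4$ containing $2$ vertices of its own color. This is a sub-quorum coloring with $k=3$ and no singleton class. Here $|C|=2<s/2=3$, so the heuristic $|C|\ge s/2$ fails too. Adding the quorum inequalities at one vertex of $C\cap S_i$ and one of $C\cap S_j$ only gives $4|C|\ge s+2$, hence $k\le3$. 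Your plan to ``exclude the residual small cases by hand'' therefore cannot work, because $k=3$ genuinely occurs. It can only be ruled out when every part of $S$ has at most two vertices, so the target must involve the part sizes, not the constant $2$.

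\textbf{The missing idea is to compare $k$ with $r=\max_i|S_i|$ instead of with $2$.} Your own displayed inequality already does this. Since $|C\cap S_i|\ge1$, it gives
\[
|C|\ge\frac{s-|S_i|+1}{2}\ge\frac{s-r+1}{2}
\]
for every class $C$. Summing over the classes gives $k\le 2s/(s-r+1)$. The cases then close as follows.
\begin{itemize}
\item If $r=1$, this is $k\le2$.
\item If $r\ge2$ and $s>r+1$, then $2s<(r+1)(s-r+1)$, which is equivalent to $(r-1)s>(r-1)(r+1)$. Hence $k\le r\le n_p$.
\item If $s=r+1$, then $G[S]$ is the star $K_{1,r}$, and $r+1$ singleton colors violate the condition at the centre.
\end{itemize}
This single count is exactly the paper's proof. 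It handles every induced complete multipartite subgraph at once, so the singleton dichotomy, and with it your Case~2, becomes unnecessary.
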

\begin{proof}
If $n_p=1$, the graph is $K_p$ and Proposition~\ref{prop:complete} applies. Assume $n_p\ge2$. The largest part gives $\psq\ge n_p$. For the upper bound, take a nonempty induced complete multipartite graph $H$, let $N=|V(H)|$, and let $r$ be its largest part size. If $r=1$, then $H$ is complete and $\pq(H)\le2$. Suppose $r\ge2$. Every vertex has at least $N-r$ neighbors, so every class of a quorum coloring has size at least $(N-r+1)/2$. If $N>r+1$, its number of classes is at most
\[
\frac{2N}{N-r+1}<r+1,
\]
where the strict inequality is equivalent to $(r-1)N>(r-1)(r+1)$. Hence the number is at most $r$. If $N=r$, the graph is edgeless and the bound is immediate. If $N=r+1$, it is a star with $r$ leaves, and $r+1$ singleton colors violate the condition at its center. Thus in all cases $\pq(H)\le\max\{2,r\}\le n_p$. Equation~\eqref{eq:induced} proves the result.
\end{proof}

\subsection{Corona and join graphs}
For graphs $G$ and $H$, the \emph{corona} $G\circ H$ is obtained from one copy of $G$ and $|V(G)|$ disjoint copies of $H$, joining the $i$th vertex of $G$ to every vertex in the $i$th copy of $H$.  The \emph{join} $G+H$ is obtained from the disjoint union of $G$ and $H$ by adding every edge between $V(G)$ and $V(H)$.

\begin{theorem}[Corona graphs]\label{thm:corona}
For all positive integers $n$ and $m$,
\[
\psq(K_n\circ\overline{K_m})
=mn+\left\lceil\frac{m}{n}\left\lfloor\frac{n}{m}\right\rfloor\right\rceil
=\begin{cases}
mn, & 1\le n<m,\\
mn+1, & 1\le m\le n.
\end{cases}
\]
\end{theorem}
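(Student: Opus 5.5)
We prove matching lower and upper bounds. Throughout, write $c_1,\dots,c_n$ for the hub vertices (the copy of $K_n$) and $L_i$ for the $m$ leaves attached to $c_i$. The second expression in the statement is just a case split of the first: $\lfloor n/m\rfloor=0$ when $n<m$, and $\lceil\frac mn\lfloor n/m\rfloor\rceil=1$ when $m\le n$. So it suffices to prove the piecewise formula.

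\emph{Lower bounds.} The $mn$ leaves form an independent set, so $\psq\ge mn$ by Theorem~\ref{thm:beta2}. When $m\le n$, we additionally color all $n$ hubs with one common new color and give every leaf a private color.
\begin{itemize}
\item A leaf has one colored neighbor, so Lemma~\ref{lem:local} is satisfied.
\item A hub has $n$ same-colored vertices in its colored closed neighborhood (itself and the other $n-1$ hubs) and $m\le n$ differently colored ones (its leaves). This satisfies the quorum inequality.
\end{itemize}
This gives $mn+1$ colors.

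\emph{Upper bound: reduction.} Take an optimal sub-quorum coloring; as noted in Section~2, its color classes may be assumed connected. Leaves are pairwise nonadjacent, so every class other than a singleton leaf contains a hub. Let $h$ be the number of colored hubs and let $C_1,\dots,C_k$ be the classes containing hubs, where $h_C$ denotes the number of hubs in class $C$. For each hub $c_i$, let $u_i$ be the number of leaves in $L_i$ that are not singleton color classes; these leaves are either uncolored or in the class of $c_i$. Then
\[
\#\text{colors}\le mn-\sum_i u_i+k,
\]
where the sum runs over all hubs, and $u_i=m$ for uncolored hubs, which only helps. If $h=0$, this gives $mn$ immediately. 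So it suffices to show $\sum_i u_i\ge k-1$ in general, and $\sum_i u_i\ge k$ when $n<m$.

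\emph{Upper bound: the local inequality at a hub.} This is the key step. Consider a colored hub $v=c_i$ in class $C$.
\begin{itemize}
\item Its same-colored neighbors number at most $(h_C-1)+u_i$, since its own-colored leaves are among the $u_i$ non-singleton leaves.
\item Its differently colored neighbors number at least $(h-h_C)+(m-u_i)$: the other colored hubs, plus its singleton leaves.
\end{itemize}
Lemma~\ref{lem:local} then gives $h-h_C+m-u_i\le h_C+u_i$, that is,
\[
2u_i\ge m+h-2h_C .
\]
Summing over the hubs of $C$ yields $2U_C\ge h_C(m+h-2h_C)$, where $U_C=\sum_{c_i\in C}u_i$.

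\emph{Upper bound: counting classes.} If $h_C\le h/2$, the right-hand side is at least $h_C m>0$, so $U_C\ge1$. At most one class can have $h_C>h/2$, hence at least $k-1$ classes have $U_C\ge1$. This proves the bound $mn+1$ for all $m,n$.

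When $n<m$, even that exceptional class satisfies
\[
m+h-2h_C\ge m-h_C\ge m-n>0,
\]
because $h_C\le h\le n$. So every class has $U_C\ge1$, and the total is at most $mn$.

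The main obstacle is setting up the right accounting: charging each hub-containing class to the leaves it prevents from being singleton colors, and recognizing that at most one class can be majority-hub, which is exactly where the $+1$ and the threshold $m\le n$ arise.
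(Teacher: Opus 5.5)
Your proof is correct and follows essentially the paper's argument: your $U_C$ is the paper's $d_j=\sum_{i\in U_j}(m-p_i)$, and your summed hub inequality $2U_C\ge h_C(m+h-2h_C)$ is a quantitative form of the paper's observation that $d_j=0$ forces $2s_j\ge u+m$. Hence at most one majority class escapes the charge, and none does when $n<m$. One small slip: an uncolored hub need not have $u_i=m$, since all its leaves may be singleton classes, but your argument only uses $u_i\ge0$ there, so nothing is affected.
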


\begin{proof}
Set $G=K_n\circ\overline{K_m}$.  Let $L$ be the set of the $mn$ vertices belonging to the copies of $\overline{K_m}$ and let $T=V(G)\setminus L$ be the central clique.  Coloring every vertex of $L$ with a distinct color gives $mn$ colors.  If $n\ge m$, color in addition all vertices of $T$ with one common color.  A vertex of $L$ then sees only itself and its central neighbor, while a vertex of $T$ has $n$ vertices of its own color in a closed colored neighborhood of size $n+m$.  Hence
\[
\psq(G)\ge
\begin{cases}
mn,&n<m,\\
mn+1,&n\ge m.
\end{cases}
\]

For the upper bound, take an optimal coloring, whose color classes are connected. Let $U$ be its colored central vertices and put $u=|U|$. Every color class avoiding $U$ is a singleton leaf. If $u=0$, there are at most $mn$ colors. Otherwise let $U_1,\ldots,U_t$ be the nonempty intersections of the central color classes with $U$, with $s_j=|U_j|$.

For every central vertex $i$, colored or not, let $p_i$ be the number of its leaves that form singleton color classes. Thus $0\le p_i\le m$ and the total number of colors is
\[
k=t+\sum_{i=1}^n p_i.
\]
For a central class $j$, put $d_j=\sum_{i\in U_j}(m-p_i)$. If $d_j=0$, each center in $U_j$ has all its $m$ leaves colored as singletons. Its closed colored neighborhood has size $u+m$, and exactly $s_j$ of these vertices have its color. The quorum condition therefore gives $2s_j\ge u+m$, in particular $s_j>u/2$. At most one central class can satisfy this. Since the $d_j$ are nonnegative integers, $\sum_jd_j\ge t-1$, and
\[
k\le mn+t-\sum_jd_j\le mn+1.
\]
If $n<m$, then $u\le n<m$ and $2s_j\le2u<u+m$ for every $j$. Hence every $d_j$ is at least one, and the same count gives $k\le mn$. This proves both cases, including $n=m=1$.
\end{proof}

\begin{corollary}\label{cor:corona-quorum}
For all integers $n\ge m\ge1$,
\[
\pq(K_n\circ\overline{K_m})=mn+1.
\]
\end{corollary}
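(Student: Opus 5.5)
The plan is to prove the two inequalities separately. The upper bound $\pq(K_n\circ\overline{K_m})\le mn+1$ needs no new work: every quorum coloring is a sub-quorum coloring in which every vertex is colored, so \eqref{eq:induced} gives $\pq(G)\le\psq(G)$. Theorem~\ref{thm:corona} then gives $\psq(G)=mn+1$ when $m\le n$.

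For the lower bound I will exhibit a total coloring, that is, a genuine quorum partition of $V(G)$, with $mn+1$ classes. I take the construction from the proof of Theorem~\ref{thm:corona}: each of the $mn$ leaves is its own class, and the central clique $T$ forms one further class. Because every vertex is colored, this is a partition, so it suffices to check the quorum inequality at every vertex with respect to full closed neighborhoods.

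The two checks are as follows.
\begin{itemize}
\item A leaf has closed neighborhood consisting of itself and its center, of size $2$. Its own class contributes $1\ge 2/2$, so the inequality holds.
\item A central vertex has closed neighborhood $T$ together with its $m$ leaves, of size $n+m$. Its own class contributes exactly $n$ of these vertices, and $n\ge (n+m)/2$ is equivalent to $n\ge m$, which is the hypothesis.
\end{itemize}
Hence $\pq(G)\ge mn+1$, and with the upper bound this gives equality.

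I do not expect a real obstacle. The only point needing care is to confirm that the lower-bound construction in Theorem~\ref{thm:corona} colors every vertex, so that it is a partition and not merely a partial coloring. It does, since all leaves and all centers receive colors. The degenerate case $n=m=1$ (where $G=K_2$ and the answer is $2$) is covered by the same verification, because the hypothesis is $n\ge m$ with $m\ge1$, so $n\ge1$.
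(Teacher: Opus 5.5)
Your proposal is correct and matches the paper's proof. The lower bound uses the total coloring from the construction in Theorem~\ref{thm:corona}: singleton leaves plus the whole central clique as one class, which is quorum at every vertex because $n\ge m$. The upper bound is $\pq\le\psq=mn+1$ from that theorem, and your explicit check of the leaf and center inequalities (including $n=m=1$) just spells out what the paper states more briefly.
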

\begin{proof}
When $n\ge m$, the $mn+1$-color construction in the proof of Theorem~\ref{thm:corona} colors every vertex, and is therefore a quorum coloring.  Hence $\pq\ge mn+1$.  Since every quorum coloring is a sub-quorum coloring, Theorem~\ref{thm:corona} gives $\pq\le\psq=mn+1$.
\end{proof}

The next lemma gives the quorum bound needed for the join family.
\begin{lemma}\label{lem:join-quorum}
For all integers $m\ge2$ and $n\ge2$,
\[
\pq(K_m+\overline{K_n})\le2.
\]
\end{lemma}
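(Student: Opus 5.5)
The plan is a direct counting argument on the class sizes. Write $G=K_m+\overline{K_n}$ with clique $A$, $|A|=m$, and independent set $B$, $|B|=n$, and put $N=m+n=|V(G)|$. I will first show that every color class of a quorum coloring meets $A$. Then I will show that every class meeting $A$ has at least $N/2$ vertices. Together these give at most two classes.

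\emph{Step 1: every class meets $A$.} Take a vertex $b\in B$. Since $B$ is independent and $b$ is adjacent to all of $A$, we have $N[b]=\{b\}\cup A$, of size $m+1$. Let $V_i$ be the class of $b$. The quorum condition $|N[b]\cap V_i|\ge (m+1)/2$ then says that $V_i$ contains at least $(m+1)/2-1=(m-1)/2$ vertices of $A$. Because $m\ge2$, this is a positive quantity, so $|V_i\cap A|\ge1$. A class that contains no vertex of $B$ is a nonempty subset of $A$. Hence every class meets $A$.

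\emph{Step 2: classes meeting $A$ are large.} Take $a\in A\cap V_i$. The vertex $a$ is adjacent to every other vertex, so $N[a]=V(G)$. The quorum condition becomes $|V_i|\ge N/2$.

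\emph{Conclusion.} The classes are pairwise disjoint, each has at least $N/2$ vertices, and together they cover the $N$ vertices. So there are at most two of them, which gives $\pq(G)\le 2$.

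There is no real obstacle here. The only point to watch is where the hypothesis $m\ge2$ enters: for $m=1$ a vertex of $B$ could form a singleton class, since then $(m-1)/2=0$. The argument never needs $n\ge2$; that hypothesis presumably serves only to avoid the degenerate case where $G$ is complete.
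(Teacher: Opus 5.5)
Your proof is correct and is essentially the paper's argument, with the two steps in the opposite order. In both, every class meeting the clique has at least $(m+n)/2$ vertices because clique vertices are universal, and $m\ge2$ forces every class to meet the clique, since a vertex of $\overline{K_n}$ needs at least $(m-1)/2>0$ same-colored clique neighbors. Your remark that $n\ge2$ is never used also holds for the paper's proof.
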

\begin{proof}
Let $\pi$ be a quorum coloring of $K_m+\overline{K_n}$.  Every color class containing a vertex of $K_m$ has size at least
$\lceil(m+n)/2\rceil$, because each clique vertex is adjacent to every other vertex of the graph.  Hence at most two color classes can meet $K_m$.

Moreover, when $m\ge2$, no color class can lie entirely in $\overline{K_n}$: a vertex in such a class has no same-colored neighbor in the independent part, whereas its closed neighborhood contains itself and all $m$ clique vertices, so the quorum inequality would require $1\ge(m+1)/2$.  Thus every color class meets $K_m$, and $|\pi|\le2$.
\end{proof}

\begin{theorem}[Join graphs]\label{thm:join}
For all integers $m\ge1$ and $n\ge2$,
\[
\psq(K_m+\overline{K_n})=n.
\]
\end{theorem}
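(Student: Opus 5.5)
The plan is to use the induced-subgraph formula \eqref{eq:induced}: it suffices to bound $\pq(H)$ for every nonempty induced subgraph $H$ of $G=K_m+\overline{K_n}$ and to exhibit one subgraph attaining $n$. For the lower bound, color the $n$ vertices of $\overline{K_n}$ with distinct colors and leave the clique uncolored. Equivalently, this set is independent, so Theorem~\ref{thm:beta2} gives $\psq(G)\ge n$.

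For the upper bound, let $H$ be induced by $A\subseteq V(K_m)$ and $B\subseteq V(\overline{K_n})$, with $a=|A|$ and $b=|B|$. Then $H\cong K_a+\overline{K_b}$, and I would split into cases according to $a$ and $b$.
\begin{itemize}
\item If $a=0$, then $H$ is edgeless and $\pq(H)=b\le n$.
\item If $a\ge2$ and $b\ge2$, Lemma~\ref{lem:join-quorum} gives $\pq(H)\le2\le n$.
\item If $b=0$, then $H=K_a$ and $\pq(H)\le2\le n$ by the result of \cite{HHLM2013} quoted in Proposition~\ref{prop:complete}.
\item If $b=1$ and $a\ge1$, then $H=K_{a+1}$, so again $\pq(H)\le2$.
\item If $a=1$ and $b\ge2$, then $H$ is the star $K_{1,b}$, and Proposition~\ref{prop:star} bounds even $\psq(K_{1,b})$ by $b\le n$.
\end{itemize}
These cases cover all $(a,b)$ with $H$ nonempty. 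Hence $\pq(H)\le n$ in every case, and \eqref{eq:induced} gives $\psq(G)\le n$.

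No step is a serious obstacle; the content is mostly bookkeeping of degenerate cases. The one point to check is that Lemma~\ref{lem:join-quorum} applies only when both $a,b\ge2$, so the cases $a=1$ and $b\le1$ must be handled separately as stars and complete graphs. The star case with $a=1$ is where the value $b$ can be as large as $n$, so it is the case that actually attains the bound. The hypothesis $n\ge2$ is used only to ensure $2\le n$ in the complete-graph cases.
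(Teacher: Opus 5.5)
Your proof is correct and follows essentially the same route as the paper. It takes the lower bound from the independent part and proves the upper bound via \eqref{eq:induced}, splitting induced subgraphs into edgeless graphs, complete graphs, stars, and joins with both sides of size at least two (the last handled by Lemma~\ref{lem:join-quorum}); your explicit $(a,b)$ case split is a slightly more systematic version of the paper's enumeration.
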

\begin{proof}
The independent set $V(\overline{K_n})$ gives $\psq(K_m+\overline{K_n})\ge n$ by assigning a distinct color to each of its vertices and leaving the clique uncolored.

For the reverse inequality, use the induced-subgraph characterization~\eqref{eq:induced}.  Every nonempty induced subgraph of $K_m+\overline{K_n}$ is an independent graph, a complete graph, a star, or a graph $K_r+\overline{K_s}$ with $r,s\ge1$.  Independent induced subgraphs have at most $n$ vertices; complete induced subgraphs have quorum coloring number at most $2$; stars have quorum coloring number at most the number of their leaves, hence at most $n$; and when $r,s\ge2$, Lemma~\ref{lem:join-quorum} gives quorum coloring number at most $2$.  The remaining case $s=1$ is complete, and the case $r=1$ is a star.  Thus every induced subgraph has quorum coloring number at most $n$, and~\eqref{eq:induced} yields $\psq(K_m+\overline{K_n})\le n$.
\end{proof}

\section{Hypercubes}\label{sec:cube}
Let $Q_n$ be the $n$-dimensional hypercube with vertex set $\{0,1\}^n$, where two binary strings are adjacent when they differ in exactly one coordinate. The Hamming weight of $x$ is denoted $w(x)$. The graph is $n$-regular and bipartite, with parity classes
\[
E_n=\{x:w(x)\equiv0\pmod2\},\qquad O_n=\{x:w(x)\equiv1\pmod2\},
\]
each of cardinality $2^{n-1}$.

It is useful to contrast the partial and total versions already at this point. Hedetniemi et al.~\cite{HHLM2013} determined the ordinary quorum coloring number of hypercubes. The sub-quorum problem studied here is substantially different because vertices may be left uncolored; the even-parity construction below already gives $2^{n-1}$ sub-quorum colors. Thus the hypercube section is not a reformulation of the known total-coloring result, but an extremal problem created by the additional freedom of partial coloring.

\begin{theorem}\label{thm:cubebeta}
For every integer $n\ge2$,
\[
\boxed{\bii(Q_n)=2^{n-1}.}
\]
Consequently, $\psq(Q_n)\ge 2^{n-1}$.
\end{theorem}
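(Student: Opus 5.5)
We prove $\bii(Q_n)=2^{n-1}$ by matching a lower and an upper bound. The consequence $\psq(Q_n)\ge 2^{n-1}$ then follows at once from Theorem~\ref{thm:beta2}.

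For the lower bound, the even class $E_n$ is independent, so it is $2$-independent and has size $2^{n-1}$. Hence $\bii(Q_n)\ge 2^{n-1}$.

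For the upper bound, let $S$ be a $2$-independent set. Write $G[S]$ as a disjoint union of $k$ edges and $s$ isolated vertices, so $|S|=2k+s$. We double count the edges between $S$ and $V\setminus S$. Each isolated vertex of $G[S]$ sends $n$ edges out of $S$. Each endpoint of an edge of $G[S]$ sends $n-1$ edges out. The total is therefore $sn+2k(n-1)$.

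On the other side, every vertex $y\notin S$ has at most $n$ neighbours in $S$. A sharper bound comes from the observation that $N(y)\cap S$ must itself be $2$-independent. But $N(y)$ is independent in $Q_n$, because the graph is bipartite, so this constraint gives nothing new. The counting therefore only yields
\[
sn+2k(n-1)\le n\,(2^n-2k-s).
\]
The unknown contributions are $s$ and $2k$ with coefficients about $2n$ each, which gives $|S|\lesssim 2^{n-1}\cdot n/(n-1)$. This is slightly too weak, so plain double counting is not enough.

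The step I expect to be the main obstacle is closing this small gap, and my first approach is a cube partition. Partition $Q_n$ into $2^{n-2}$ disjoint copies of $Q_2=C_4$ by fixing the last $n-2$ coordinates. Each induced $4$-cycle can hold at most $\lfloor 2\cdot 4/3\rfloor=2$ vertices of a $2$-independent set, since $\bii(C_4)=2$ by Theorem~\ref{thm:pathcycle}. Moreover, $S$ restricted to any induced subgraph is still $2$-independent in that subgraph. Summing over the $2^{n-2}$ copies gives
\[
|S|\le 2\cdot 2^{n-2}=2^{n-1}.
\]
Since this argument already closes the gap, the double-counting attempt is unnecessary and the proof reduces to the lower bound plus the $C_4$ partition. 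The only points to check are the value $\bii(C_4)=2$ and the fact that the $C_4$ copies are genuinely induced subgraphs. Both are immediate: three vertices of $C_4$ always contain a vertex adjacent to the other two.
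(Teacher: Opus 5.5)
Your proof is correct, but it takes a different and much more elementary route than the paper.

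\textbf{The paper's route.} The paper assumes a $2$-independent $S$ with $|S|>2^{n-1}$ and picks $2^{n-1}+1$ of its vertices. It then invokes Huang's induced-subgraph theorem: these vertices induce a subgraph of maximum degree at least $\sqrt n$. Since $n\ge2$ and degrees are integers, this maximum degree is at least $2$, a contradiction.

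\textbf{Your route.} You partition $Q_n$ into the $2^{n-2}$ subcubes $Q_2\cong C_4$ obtained by fixing the last $n-2$ coordinates. Each of these holds at most two vertices of $S$. In pigeonhole form, any $2^{n-1}+1$ vertices place three vertices in one $4$-cycle, and those three induce a $P_3$ with a vertex of degree two. That is exactly the integer-rounded consequence of Huang's bound the paper uses; the $\sqrt n$ growth is never needed. You do not even need the subcubes to be induced: only that the four cycle edges are edges of $Q_n$.

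\textbf{What each approach buys.} Your argument is self-contained. It is the hypercube analogue of the $2\times2$ tiling the paper itself uses for grid dissociation sets (Lemma~\ref{lem:diss-ladder} and the even-by-even case of Proposition~\ref{prop:grid-profile-bound}).

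The paper's choice keeps the section uniform with Theorem~\ref{thm:cubeall}, where Huang's signed matrix is genuinely needed. For sub-quorum representatives, a single $4$-cycle can carry two diagonally opposite singleton representatives together with one endpoint of a representative edge leaving the tile. That gives local weight $5/2$, so a tile-by-tile count breaks down. This is the same obstruction as the saturated tiles in Appendix~\ref{app:absorption}.

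\textbf{Minor slip.} Your discarded double-counting estimate actually gives $|S|\le 2^{n-1}+k/n$, hence $|S|\le 2^{n-1}\cdot 2n/(2n-1)$ rather than $2^{n-1}n/(n-1)$. This plays no role in the final proof.
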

\begin{proof}
Either parity class of $Q_n$ is independent and has cardinality $2^{n-1}$, so $\bii(Q_n)\ge2^{n-1}$.

For the reverse inequality, suppose that $S\subseteq V(Q_n)$ is $2$-independent and $|S|>2^{n-1}$. Choose $S'\subseteq S$ with $|S'|=2^{n-1}+1$. Huang's theorem on induced subgraphs of the hypercube \cite{Huang2019} gives
\[
\Delta(Q_n[S'])\ge \sqrt n.
\]
Since $n\ge2$ and vertex degrees are integers, this implies $\Delta(Q_n[S'])\ge2$. But $S'\subseteq S$ and $\Delta(Q_n[S])\le1$, a contradiction. Hence $|S|\le2^{n-1}$, proving the equality. The final assertion follows from Theorem~\ref{thm:beta2}.
\end{proof}

The following counting restriction is useful when searching for a coloring above the bipartition bound.
\subsection{The sub-quorum coloring number of the hypercube}

We now prove that the lower bound in Theorem~\ref{thm:cubebeta} is exact in every dimension.  The proof uses the symmetric signed adjacency matrix introduced by Huang in his proof of the Sensitivity Conjecture \cite{Huang2019}.  We recall only the two properties that are needed here.  There exists a symmetric matrix $H_n$, indexed by $V(Q_n)$, such that
\[
 (H_n)_{xy}\in\{0,\pm1\},\qquad
 |(H_n)_{xy}|=1\ \Longleftrightarrow\ xy\in E(Q_n),
\]
and
\begin{equation}\label{eq:huang-square}
 H_n^2=nI.
\end{equation}

\begin{theorem}\label{thm:cubeall}
For every integer $n\ge2$,
\[
\boxed{\psq(Q_n)=\bii(Q_n)=2^{n-1}.}
\]
\end{theorem}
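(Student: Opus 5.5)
The lower bound $\psq(Q_n)\ge 2^{n-1}$ is already Theorem~\ref{thm:cubebeta}. So the plan is entirely about the upper bound $\psq(Q_n)\le 2^{n-1}$, argued by contradiction in the style of Huang's eigenvalue-interlacing argument. Suppose an optimal sub-quorum coloring $f$ of $Q_n$ uses $k>2^{n-1}$ colors. Its classes $C_1,\ldots,C_k$ may be taken connected.

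\emph{Step 1: an injective linear map.} To each class $C_j$ assign a nonzero vector $u_j\in\mathbb R^{V(Q_n)}$ supported on $C_j$. The classes are disjoint, so the map $\Phi:\mathbb R^k\to\mathbb R^{V(Q_n)}$, $c\mapsto\sum_j c_ju_j$, is injective and its image $W$ has dimension $k>2^{n-1}$. By \eqref{eq:huang-square}, $H_n$ has eigenvalues $\pm\sqrt n$ only. Its trace is zero, so each eigenspace has dimension $2^{n-1}$. Hence $W$ meets both eigenspaces nontrivially. I will use the $+\sqrt n$ eigenspace (either could serve), obtaining $0\neq v=\Phi(c)$ with $v^{\mathsf T}H_nv=\sqrt n\,\|v\|^2$.

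\emph{Step 2: the choice of $u_j$ (selection).} For a singleton class $\{x\}$ take $u_j=e_x$. For a non-singleton class, select one edge $xy$ inside it, as in Lemma~\ref{lem:omega}, and take $u_j=e_x-(H_n)_{xy}e_y$. With this choice the internal edge contributes $-2c_j^2$ to $v^{\mathsf T}H_nv$, while $\|u_j\|^2=2$. Let $T$ be the singletons and $M$ the selected edges, as in Lemma~\ref{lem:omega}. Then
\[
\sqrt n\,\|v\|^2=v^{\mathsf T}H_nv\le -2\sum_{M}c_j^2+\sum_{\substack{xy\in E(Q_n)\\ x,y\ \text{in distinct representatives}}}2|v_x||v_y| .
\]
This is the restricted energy estimate. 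Bounding $2|v_x||v_y|\le v_x^2+v_y^2$ turns the cross sum into $\sum_x d'(x)v_x^2$, where $d'(x)$ is the number of cross neighbours of $x$ among the selected vertices. Each $t\in T$ has $d'(t)\le1$ by Lemma~\ref{lem:local}. So the $T$-part of the right side is at most $\sum_T c^2<\sqrt n\sum_T c^2$ for $n\ge2$.

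\emph{Step 3 (the main obstacle).} The difficulty is the matched vertices. A representative edge $xy$ may have many colored neighbours of other colors, and nothing in feasibility alone bounds $d'(x)+d'(y)$ by $2\sqrt n+2$. So the selection in Step 2 must use the quorum inequality $b(v)\le a(v)+1$ more strongly. I would try two options in turn.
\begin{itemize}
\item Choose the edge $xy$ of each class to minimise the number of selected foreign neighbours.
\item Alternatively, let $u_j$ spread over the whole class with signs propagated along a spanning tree of $C_j$. Then each vertex's foreign neighbours, at most $a(x)+1$ of them, are paid for by its $a(x)$ internal edges, each of which contributes a nonpositive term after the sign choice.
\end{itemize}
Two facts about $Q_n$ help here: it is bipartite, and two vertices have at most two common neighbours. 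I expect the whole-class version with a careful charging of foreign edges to internal edges to give $v^{\mathsf T}H_nv<\sqrt n\|v\|^2$. That is the desired contradiction, giving $k\le 2^{n-1}$ and, combined with Theorem~\ref{thm:cubebeta}, the boxed identity.
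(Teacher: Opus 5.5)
Your Steps 1 and 2 are correct, but the proof stops where the difficulty lies. Step 3 is an expectation, not an argument, and the charging you sketch for it fails. Testing the whole $k$-dimensional span of class vectors against the $+\sqrt n$ eigenspace forces you to bound $v^{\mathsf T}H_nv$ on the non-singleton classes. The quorum inequality $b\le a+1$ gives no $O(\sqrt n)$ bound on their foreign adjacencies.

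Your second option rests on the claim that, after propagating signs along a spanning tree, every internal edge contributes a nonpositive term. That is false for any class containing a $4$-cycle. The identity $H_n^2=nI$ says precisely that the four signs of every square of $Q_n$ multiply to $-1$, whereas the signs of $v_xv_y$ around a cycle multiply to $+1$. So if $u_j$ is nonzero on a square, some edge of that square contributes positively whatever signs you choose. For a class that is a subcube $Q_d$, the internal energy is at best $-\sqrt d\,\|u_j\|^2$. This does not offset the up to $d+1$ foreign neighbours per vertex allowed by $b\le a+1$ once $d\approx n/2$, and such classes do occur in genuine colorings (all of $Q_n$ split into $d$-subcubes with $d\ge(n-1)/2$). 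So the charging cannot close. Your first option, minimising selected foreign neighbours, comes with no bound at all.

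The missing idea, which is how the paper argues, is to keep the edge classes out of the linear algebra and let a count absorb them. Select both endpoints of one edge from each class containing an edge, and one vertex from each independent class. Let $A$ be the selected set, $T\subseteq A$ the independent-class representatives, and $B=V(Q_n)\setminus A$, so that $|A|+|T|=2k$. Then $d_A(t)\le1$ for $t\in T$. Also $d_T(a)\le n-1$ for $a\in A$: this is trivial for $a\in T$, and otherwise holds because the selected partner of $a$ lies in $A\setminus T$. For $x$ supported on $T$, Cauchy--Schwarz and these two degree bounds give $\|(H_nx)|_A\|^2\le(n-1)\|x\|^2$, while $\|H_nx\|^2=n\|x\|^2$ by \eqref{eq:huang-square}. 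Hence $\|(H_nx)|_B\|^2\ge\|x\|^2$, so $x\mapsto(H_nx)|_B$ is injective and $|T|\le|B|$. Therefore $2k=|A|+|T|\le|A|+|B|=2^n$.

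Your $T$-part estimate is essentially the first half of this argument. What changes is the comparison: $|T|$ against $|B|$ through an injection, rather than $k$ against the dimension of an eigenspace. With that change, the need to bound foreign neighbours of matched endpoints disappears.
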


\begin{proof}
The equality $\bii(Q_n)=2^{n-1}$ was proved in Theorem~\ref{thm:cubebeta}, and Theorem~\ref{thm:beta2} therefore gives
\[
 \psq(Q_n)\ge2^{n-1}.
\]
It remains to prove the reverse inequality.

Let $f$ be an arbitrary sub-quorum coloring of $Q_n$, with colored set $S$, and suppose that it uses $k$ colors.  Divide its color classes into two types.  From every color class containing an edge choose the two endpoints of one such edge, and from every independent color class choose one representative.  Let $A$ be the set of all selected vertices and let $T\subseteq A$ be the set of representatives chosen from the independent color classes.  If there are $r$ classes containing an edge and $s$ independent classes, then
\[
 k=r+s,\qquad |A|=2r+s,\qquad |T|=s,
\]
and consequently
\begin{equation}\label{eq:AT-2k}
 |A|+|T|=2k.
\end{equation}
Put
\[
 B=V(Q_n)\setminus A.
\]

We first record two consequences of the selection.  If $t\in T$, then its color class is independent.  Hence the only vertex of its own color in $N[t]\cap S$ is $t$ itself.  The sub-quorum condition therefore gives
\[
 |N[t]\cap S|\le2.
\]
Since $A\subseteq S$, it follows that
\begin{equation}\label{eq:t-degree-A}
 d_A(t)\le1\qquad(t\in T).
\end{equation}
On the other hand, for every $a\in A$,
\begin{equation}\label{eq:a-degree-T}
 d_T(a)\le n-1.
\end{equation}
Indeed, if $a\in T$, this follows from \eqref{eq:t-degree-A} and $n\ge2$.  If $a\notin T$, then $a$ was selected from a color class containing an edge, together with the other endpoint of the chosen edge.  That selected partner belongs to $A\setminus T$, so among the $n$ neighbors of $a$ at most $n-1$ can belong to $T$.

Consider now an arbitrary vector $x\in\mathbb R^{V(Q_n)}$ supported on $T$.  For $a\in A$, the support property of $H_n$ and Cauchy--Schwarz give
\[
 |(H_nx)_a|^2
 =\left|\sum_{t\in N(a)\cap T}(H_n)_{at}x_t\right|^2
 \le d_T(a)\sum_{t\in N(a)\cap T}x_t^2.
\]
Using \eqref{eq:a-degree-T} and summing over $a\in A$ yields
\[
 \|(H_nx)|_A\|_2^2
 \le (n-1)\sum_{a\in A}\sum_{t\in N(a)\cap T}x_t^2.
\]
Interchanging the order of summation and using \eqref{eq:t-degree-A}, we obtain
\begin{equation}\label{eq:energy-A}
 \|(H_nx)|_A\|_2^2
 \le(n-1)\sum_{t\in T}d_A(t)x_t^2
 \le(n-1)\|x\|_2^2.
\end{equation}

Since $H_n$ is symmetric and satisfies \eqref{eq:huang-square},
\[
 \|H_nx\|_2^2
 =x^{\mathsf T}H_n^2x
 =n\|x\|_2^2.
\]
Together with \eqref{eq:energy-A}, this gives
\begin{equation}\label{eq:energy-B}
 \|(H_nx)|_B\|_2^2
 =\|H_nx\|_2^2-\|(H_nx)|_A\|_2^2
 \ge\|x\|_2^2.
\end{equation}
Thus the linear map
\[
 L:\mathbb R^T\longrightarrow\mathbb R^B,
 \qquad L(x)=(H_nx)|_B,
\]
is injective.  Hence
\[
 |T|\le|B|.
\]
Finally, since $|A|+|B|=2^n$, equation \eqref{eq:AT-2k} gives
\[
 2k=|A|+|T|
 \le |A|+|B|
 =2^n.
\]
Therefore $k\le2^{n-1}$.  Since $f$ was arbitrary,
\[
 \psq(Q_n)\le2^{n-1},
\]
and the lower bound proves the theorem.
\end{proof}

\begin{remark}
The proof controls arbitrary partial colorings, not only the parity construction.  Its key point is the restricted energy estimate \eqref{eq:energy-A}: the sub-quorum condition forces every representative in $T$ to have at most one selected neighbor, while the endpoint selected from an edged color class has a selected partner outside $T$.  Huang's identity $H_n^2=nI$ then converts this local information into the global injection $\mathbb R^T\hookrightarrow\mathbb R^B$.
\end{remark}

The lower-bound construction appearing in Theorem~\ref{thm:cubebeta} is illustrated in Figures~\ref{fig:q3parity} and~\ref{fig:q4parity}.  Only the even-weight vertices are colored, and each of them receives its own color; the odd-weight vertices are left uncolored.  Since every hypercube edge joins vertices of opposite parity, the colored support is independent.  Thus these figures display optimal sub-quorum colorings with $2^{n-1}$ colors, rather than a two-coloring of the whole hypercube.

\begin{figure}[ht]
\centering
\begin{tikzpicture}[
  scale=1.02,
  every node/.style={circle,draw,minimum size=5.7mm,inner sep=0pt,font=\scriptsize},
  sel/.style={fill=gray!45},
  un/.style={fill=white}
]
\node[sel] (q000) at (0,0) {000};
\node[un]  (q001) at (2.2,0) {001};
\node[un]  (q010) at (0,2.2) {010};
\node[sel] (q011) at (2.2,2.2) {011};

\node[un]  (q100) at (.75,.75) {100};
\node[sel] (q101) at (2.95,.75) {101};
\node[sel] (q110) at (.75,2.95) {110};
\node[un]  (q111) at (2.95,2.95) {111};

\draw (q000)--(q001)--(q011)--(q010)--cycle;
\draw (q100)--(q101)--(q111)--(q110)--cycle;
\draw (q000)--(q100) (q001)--(q101) (q010)--(q110) (q011)--(q111);

\node[draw=none,rectangle,font=\small] at (1.48,-.72)
{$E_3=\{000,011,101,110\}$,\qquad $|E_3|=4=2^{3-1}$};
\end{tikzpicture}
\caption{An optimal sub-quorum coloring of $Q_3$. Filled vertices are the even-weight vertices and each receives a distinct color; unfilled vertices are uncolored. Hence the colored support is independent and the coloring uses four colors.}
\label{fig:q3parity}
\end{figure}

\begin{figure}[ht]
\centering
\begin{tikzpicture}[
  scale=.88,
  every node/.style={circle,draw,minimum size=4.9mm,inner sep=0pt,font=\tiny},
  sel/.style={fill=gray!45},
  un/.style={fill=white}
]
\node[sel] (a000) at (0,0) {0000};
\node[un]  (a001) at (1.8,0) {0001};
\node[un]  (a010) at (0,1.8) {0010};
\node[sel] (a011) at (1.8,1.8) {0011};
\node[un]  (a100) at (.58,.58) {0100};
\node[sel] (a101) at (2.38,.58) {0101};
\node[sel] (a110) at (.58,2.38) {0110};
\node[un]  (a111) at (2.38,2.38) {0111};
\draw (a000)--(a001)--(a011)--(a010)--cycle;
\draw (a100)--(a101)--(a111)--(a110)--cycle;
\draw (a000)--(a100) (a001)--(a101) (a010)--(a110) (a011)--(a111);

\begin{scope}[xshift=5.0cm]
\node[un]  (b000) at (0,0) {1000};
\node[sel] (b001) at (1.8,0) {1001};
\node[sel] (b010) at (0,1.8) {1010};
\node[un]  (b011) at (1.8,1.8) {1011};
\node[sel] (b100) at (.58,.58) {1100};
\node[un]  (b101) at (2.38,.58) {1101};
\node[un]  (b110) at (.58,2.38) {1110};
\node[sel] (b111) at (2.38,2.38) {1111};
\draw (b000)--(b001)--(b011)--(b010)--cycle;
\draw (b100)--(b101)--(b111)--(b110)--cycle;
\draw (b000)--(b100) (b001)--(b101) (b010)--(b110) (b011)--(b111);
\end{scope}

\draw[densely dashed] (a000)--(b000);
\draw[densely dashed] (a001)--(b001);
\draw[densely dashed] (a010)--(b010);
\draw[densely dashed] (a011)--(b011);
\draw[densely dashed] (a100)--(b100);
\draw[densely dashed] (a101)--(b101);
\draw[densely dashed] (a110)--(b110);
\draw[densely dashed] (a111)--(b111);

\node[draw=none,rectangle,font=\small] at (3.7,-.72)
{$|E_4|=8=2^{4-1}$ distinct colors};
\end{tikzpicture}
\caption{The same optimal construction on $Q_4$, drawn as two copies of $Q_3$. Dashed edges join corresponding vertices in the two copies. Exactly the eight even-weight vertices are filled, each with its own color; all odd-weight vertices are uncolored.}
\label{fig:q4parity}
\end{figure}

\section{Verification and reproducibility}\label{sec:reproduction}
The public \href{https://github.com/the-omega-institute/subquorum-colorings/tree/d8f15435f8ea2b4b97ac94008f1611204cbd1c0b}{subquorum-colorings reproducibility package} supplies the exact code, full frontier vectors and logs used here. The linked revision is fixed, so later development does not change the evidence for Table~\ref{tab:strip-certificates}. No external empirical data are needed.

\paragraph{Integer certificates.} The programs \texttt{develop/grid\_omega.cpp} and \texttt{develop/check\_omega.cpp} implement the transfer computation independently. From that checkout, compile both with a C++17 compiler into \texttt{build/}, then run \texttt{python3 scripts/check\_strip.py m} for each width $m$ in Table~\ref{tab:strip-certificates}. This regenerates finite readouts, compares the archived full vectors, and reconstructs both endpoints independently from the empty state. The guide \texttt{docs/REPRODUCE.md} gives the complete commands. Every transition and objective uses integer arithmetic; unreachable states are excluded from the stored finite support. A completed matching has even doubled weight. The identity of the full vector, rather than a repeated scalar optimum, supplies the induction to all lengths.

\paragraph{Universal written proofs.} Appendix~\ref{app:profiles} gives the new profile proof of the known grid dissociation formula and its extension to directional matchings. Appendix~\ref{app:absorption} proves the five-sixths bound for mixed matchings. The package's five grid-checking scripts provide finite controls, including constructed short-path cases, for those arguments. Their role is regression checking; the universal implications follow from the displayed lemmas and proofs.

\paragraph{Formal verification.} The package also contains the Lean~4 formalization of the hypercube coloring identity, with theorem \texttt{subQuorumChromaticNumber\_hypercube} for every $n\ge2$. The file \texttt{formal/UPSTREAM.json} records its original source revision and hashes, and \texttt{formal/Audit.lean} records the axiom check. The recorded closure is \texttt{propext}, \texttt{Classical.choice}, and \texttt{Quot.sound}. This formal statement covers $\psq(Q_n)=2^{n-1}$; the grid results and the separately proved dissociation identity are not part of that Lean statement.

\paragraph{Use of AI systems.} H.M.\ and W.Z.\ used OpenAI models, primarily GPT-5.6 and GPT-6 Astra, through Codex to assist with exploration of proof strategies and counterexamples, development of exact search and certificate-checking programs, and the writing, debugging and review of code and Lean proofs. These tools also assisted with cross-checking mathematical arguments, implementations and computational results, comparing arguments with the literature, and drafting and revising the manuscript.

Model-generated suggestions and code were subjected to the proof review and computational checks described in this paper and its reproducibility package. The Lean kernel checked the formalized hypercube theorem; the grid results are supported by the written proofs and integer certificates described above. All authors retain responsibility for the mathematical claims, verification code and final text.

\section{Conclusion}
We have developed the sub-quorum coloring problem proposed in the foundational paper on quorum colorings. The central structural observation is that partial coloring naturally exposes induced subgraphs, while Theorem~\ref{thm:beta2} connects the maximum number of sub-quorum colors to $2$-independence. For rectangular grids, our profile proof of the known dissociation formula in Theorem~\ref{thm:gridbeta} supplies the rigidity needed for Theorem~\ref{thm:directionalomega}, which settles all one-direction representative matchings. Theorem~\ref{thm:fivesixths} controls mixed directions on even-by-even rectangles, and Theorem~\ref{thm:gridstrips} proves the full sub-quorum equality for every strip of width at most $11$. The connection is also exact for paths, cycles, and all hypercubes.

The hypercube case is completely determined. Huang's induced-subgraph theorem yields the dimension-free identity $\bii(Q_n)=2^{n-1}$, and the signed-matrix argument of Theorem~\ref{thm:cubeall} proves the matching upper bound $\psq(Q_n)\le2^{n-1}$ for arbitrary sub-quorum colorings. Hence $\psq(Q_n)=\bii(Q_n)=2^{n-1}$ for every $n\ge2$.

Several broader directions remain open. Conjecture~\ref{conj:grid} asks whether the periodic $2$-independent constructions are always optimal. On the algorithmic side, Theorem~\ref{thm:np} gives NP-completeness already for bipartite graphs, but the present reduction does not survive the geometric restrictions imposed by grid or hypercube hosts. Problems~\ref{prob:gridcomplexity} and~\ref{prob:cubecomplexity} isolate these two restricted complexity questions. Approximation and parameterized algorithms for $\psq$ are further natural directions. Together these questions place sub-quorum colorings at the intersection of alliance theory, induced-subgraph optimization, local-majority colorings and algorithmic graph theory.

\appendix
\input{profiles}
\input{absorption}

\end{document}

%% file: profiles.tex
\section{Row profiles and directional matchings}\label{app:profiles}

Put $g(k)=k-2\lfloor k/3\rfloor=2\lceil2k/3\rceil-k$. Direct expansion of
\eqref{eq:Fmn} gives
\begin{equation}\label{eq:parity-defect}
2F(m,n)-mn=\max\{(m\bmod2)g(n),(n\bmod2)g(m)\}.
\end{equation}
We prove the profile inequalities used in the grid theorems.

\begin{lemma}[Path-neighborhood defect]\label{lem:path-defect}
For an independent set $I$ of $P_m$, $|N(I)|\ge|I|$, except when $m$ is odd
and $I=\{1,3,\ldots,m\}$. In that exception $|N(I)|=|I|-1$.
\end{lemma}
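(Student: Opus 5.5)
Label the vertices of $P_m$ as $1,\dots,m$, and let $I$ be an independent set with $|I|=k$. The plan is to inject $I$ into $N(I)$ by sending each $v\in I$ to a neighbor, and then to identify exactly when this injection can fail.

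\textbf{Case 1: $m\notin I$.} Map $v\mapsto v+1$. This is well defined, since $v+1\le m$. It is injective, and $v+1\in N(I)$ because $v+1\notin I$ by independence. Hence $|N(I)|\ge k$.

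\textbf{Case 2: $1\notin I$.} Symmetrically, $v\mapsto v-1$ gives $|N(I)|\ge k$.

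\textbf{Case 3: $1,m\in I$.} We may assume $m\ge2$. For $m=1$ the set $I=\{1\}$ is the stated exception, with $N(I)=\varnothing$.

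Write $I=\{i_1<\cdots<i_k\}$, so $i_1=1$ and $i_k=m$. Consider the $k-1$ gaps between consecutive elements. Each gap $(i_j,i_{j+1})$ contains at least one vertex, since $i_{j+1}\ge i_j+2$, and every vertex of $N(I)$ lies in some gap. A gap of length exactly one, meaning $i_{j+1}=i_j+2$, contributes one neighbor. A longer gap contributes two neighbors, namely $i_j+1$ and $i_{j+1}-1$. Thus
\[
|N(I)|=(k-1)+\#\{j: i_{j+1}-i_j\ge3\}.
\]
It follows that $|N(I)|\ge k$ unless every gap equals $2$. If every gap equals $2$, then $m$ is odd, $I=\{1,3,\dots,m\}$, and $|N(I)|=k-1$. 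Conversely, this set does have $|N(I)|=|I|-1$, which confirms the exception.

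There is no real obstacle here. The only care needed is the gap-counting identity in Case 3 and the degenerate case $m=1$.
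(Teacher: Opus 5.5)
Your proof is correct, and it is organized differently from the paper's. The paper treats all independent sets uniformly. It splits $I$ into maximal runs $i,i+2,\dots,i+2(k-1)$, notes that such a run has $k+1$ neighbors minus one for each end of $P_m$ it reaches, and observes that distinct runs have disjoint neighborhoods. This gives the exact value $|N(I)|=|I|+r-\varepsilon$, where $r$ is the number of runs and $\varepsilon$ is the number of the two conditions $1\in I$, $m\in I$ that hold. As a result, $m=1$ and $I=\varnothing$ need no separate treatment, and the exception is exactly the case $r=1$, $\varepsilon=2$.

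You instead dispose of every $I$ that misses an end vertex by the shift injection $v\mapsto v\pm1$, which requires no counting at all. You count only when $1,m\in I$. Your identity $|N(I)|=(k-1)+\#\{j:i_{j+1}-i_j\ge3\}$ is the paper's count in dual form, since in that case $\varepsilon=2$ and the number of runs is $1+\#\{j:i_{j+1}-i_j\ge3\}$. Your route is slightly more modular and makes the generic inequality immediate. The paper's route gives an exact formula for $|N(I)|$ for every independent set, which is more than the lemma needs but costs nothing.

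One small wording point: you use ``gap'' both for the set of vertices strictly between $i_j$ and $i_{j+1}$ (``length exactly one'') and for the difference $i_{j+1}-i_j$ (``every gap equals $2$''). Make this consistent.
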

\begin{proof}
Split $I$ into maximal runs whose successive indices differ by two. A run of
$k$ indices has $k+1$ neighbors, minus one for each endpoint of $P_m$ that it
touches. Different runs have disjoint neighborhoods. Each run therefore has
at least $k$ neighbors unless it touches both endpoints, in which case it is
the full alternating set and there is no other run. This includes $m=1$ and
the empty set.
\end{proof}

\begin{lemma}[Dissociation rigidity on a ladder]\label{lem:diss-ladder}
A dissociation set in $G_{2,n}$ has at most $n$ vertices if $n$ is even, and
at most $n+1$ if $n$ is odd. In the odd case equality forces both vertices
of every odd column to be selected and every even column to be empty.
\end{lemma}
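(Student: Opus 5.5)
The plan is to classify each column of $G_{2,n}$ by how many of its two vertices lie in the dissociation set $S$. Let $a$ be the number of full columns (both vertices in $S$), $e$ the number of empty columns, and $h$ the number of half columns (exactly one vertex in $S$). Then $a+e+h=n$ and $|S|=2a+h=n+(a-e)$. The whole statement therefore reduces to proving $a\le e$, with one exception: $n$ odd and $I=\{1,3,\dots,n\}$, where $a=e+1$.

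The key local observation is that a full column is isolated from $S$ on both sides. If column $j$ is full, each of its vertices already has its vertical neighbor in $S$. Since $\Delta(G_{2,n}[S])\le1$, neither of them can have a horizontal neighbor in $S$, so columns $j\pm1$ (when they exist) are empty. Two consequences follow. The set $I$ of indices of full columns is independent in the path $P_n$ on the columns, and its neighborhood $N(I)$ in $P_n$ consists entirely of empty columns. Hence $e\ge|N(I)|$.

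Now apply Lemma~\ref{lem:path-defect} to $I$ in $P_n$. Outside the exceptional case it gives $|N(I)|\ge|I|=a$, so $e\ge a$ and $|S|\le n$, which is below both bounds. In the exceptional case $n$ is odd and $I=\{1,3,\dots,n\}$. Every odd column is then full and every even column is forced empty, so $a=(n+1)/2$, $e=(n-1)/2$, $h=0$ and $|S|=n+1$. Thus $|S|\le n$ for even $n$ and $|S|\le n+1$ for odd $n$.

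For rigidity when $n$ is odd: equality $|S|=n+1$ requires $a-e=1$. That is impossible outside the exception, where $a-e\le0$, so the full-column set must be the alternating set, which yields exactly the described configuration. The only point needing care is checking that neighbors of full columns are genuinely empty and not half columns, and that has already been settled by the degree argument. I do not expect a real obstacle here; the lemma is essentially a direct corollary of Lemma~\ref{lem:path-defect}.
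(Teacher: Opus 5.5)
Your proof is correct, but it takes a different route from the paper's. The paper tiles the ladder by $2\times2$ squares. Each square is a $4$-cycle, and any three of its vertices induce a path whose middle vertex has degree two, so each square contains at most two selected vertices. Pairing the columns $(1,2),(3,4),\dots$ and leaving column $n$ alone when $n$ is odd gives the bounds. Rigidity then follows by propagating backwards from the right end. At equality column $n$ is full, which empties column $n-1$. That forces column $n-2$ to be full so that its square still holds two vertices, and so on.

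You instead combine the identity $|S|=n+(a-e)$ with the observation that the neighbors of a full column are empty. This reduces everything to Lemma~\ref{lem:path-defect}, and the bound and the equality case come out together: the rigid configuration is exactly that lemma's exceptional case. Your argument is the $M=\varnothing$ case of the $TT/BB/TP/PB$ column accounting the paper uses for Theorem~\ref{thm:ladder} and Lemma~\ref{lem:closed-ladder}. Half columns contribute zero to $|T|-|B|$ there, so your version shows that the dissociation and representative-pair ladder results are one argument. The paper's version is more local and does not depend on Lemma~\ref{lem:path-defect}. It also reuses the $2\times2$ tiling that gives $|S|\le mn/2$ on even-by-even rectangles in Proposition~\ref{prop:grid-profile-bound}.

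A cosmetic point: your first paragraph refers to $I$ before you define it as the set of full-column indices.
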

\begin{proof}
Every $2\times2$ square contains at most two selected vertices, since any
three contain a selected vertex of degree two. Pair consecutive columns into
squares, leaving the last column when $n$ is odd. This proves the bounds.
At equality for odd $n$, the last column is full and every paired square has
two selected vertices. The vertical selected edge in the last column forces
the preceding column empty, so the column before that is full. Repeating
backwards proves the claimed support. For $n=1$ it is immediate.
\end{proof}

\begin{lemma}[Odd-integer profiles]\label{lem:odd-profile}
Let $A\ge1$ be odd and let $a_1,\ldots,a_m$ be odd integers at most $A$.
Suppose adjacent entries have sum at most zero unless both equal one, and
no three consecutive entries are positive. Then
\[
\sum_{i=1}^m a_i\le
\begin{cases}
g(m),&m\text{ even},\\
\max\{A,g(m)\},&m\text{ odd}.
\end{cases}
\]
\end{lemma}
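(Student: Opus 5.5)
The plan is to prove the bound by cutting the sequence $a_1,\ldots,a_m$ into blocks whose sums and lengths I control, and then to solve a small counting problem. I begin with the local structure forced by the hypotheses. Every entry is odd, so a positive entry is at least $1$ and a nonpositive entry is at most $-1$. Two adjacent positive entries have positive sum, so the exception must apply and both equal $1$. Since no three consecutive entries are positive, the maximal positive runs are therefore either a single entry $a\in\{1,3,\ldots,A\}$ or a pair $(1,1)$. Moreover, any negative neighbour $b$ of a single positive entry $a$ satisfies $b\le-a$, and any negative neighbour of a $(1,1)$ run satisfies $b\le-1$.

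Next, I attach to each positive run its left neighbour, which is necessarily negative. This gives blocks of three kinds:
\begin{itemize}
\item $(b,1,1)$, of length $3$ and sum at most $1$;
\item $(b,a)$, of length $2$ and sum at most $0$;
\item a lone negative entry, of length $1$ and sum at most $-1$.
\end{itemize}
The only exception is a possible initial positive run with no left neighbour. Its sum is at most $2$ on two positions if it is $(1,1)$, and at most $a\le A$ on one position if it is a single entry. If $t$ counts the blocks of the first kind, this gives
\[
\sum_i a_i\le t+(\text{initial contribution})-(\text{number of lone negatives}).
\]
When the initial run is a single $a>1$, I will not charge it naively. Its right neighbour $b\le-a$ either is a lone negative, so $a+b\le0$, or is the left end of a $(b,1,1)$ block, so that $a$ plus that block has sum at most $2-a+1\le 1$ over four positions. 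This is exactly where the alternative $A$ in the odd case enters: the large contribution $a$ survives only when it cannot be cancelled. That happens for $m=1$, and more generally for sequences like $A,-A,A,\ldots,A$ of odd length, whose sum is $A$.

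The final step is the optimization. I maximize the resulting upper bound, which is the number of $(b,1,1)$ blocks plus the initial bonus minus the lone negatives, subject to the block lengths summing to $m$. I expect this to give $\lfloor m/3\rfloor$ plus a correction of at most $2$, depending on $m\bmod 3$. I then sharpen it with parity: all entries are odd, so $\sum_i a_i\equiv m\pmod 2$, and $g(m)=m-2\lfloor m/3\rfloor\equiv m\pmod 2$ as well. A crude bound of $g(m)+1$ in some residue class therefore drops to $g(m)$.

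I expect the main obstacle to be the boundary bookkeeping in this optimization. The naive count with $t$ blocks of length three and a terminal $(1,1)$ overshoots $g(m)$ by one when $m\equiv0\pmod3$, because leftover positions were charged $0$ instead of $-1$. I would first refine the charging so that every block other than $(b,1,1)$ and the initial run has sum at most $-\ell/3$ on average, counting a $(b,a)$ pair together with the following entry. Failing that, I would run a short induction on $m$ that removes the first block and checks each $m\bmod 3$ case separately, invoking parity to close the gap, and keeping the case $m$ odd with a single large initial entry apart to produce $\max\{A,g(m)\}$.
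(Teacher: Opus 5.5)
You have a genuine gap, in the case of an initial single entry $a\ge3$. Your left-attached block charging is sound when the sequence begins with a negative entry, with a $(1,1)$ run, or with a single $1$. In those cases $t-\ell$ plus the initial contribution is already at most $g(m)$, so the parity sharpening is not even needed. The case $a\ge3$ is exactly where the alternative $A$ comes from, and your refinement covers only two possibilities for its right neighbour $b$:
(i) $b$ is a lone negative;
(ii) $b$ is the left end of a $(b,1,1)$ block.
In case (ii) the sum is $a+b+2\le2$, not $\le1$; this slip is harmless. The third possibility, that $b$ is the left end of a $(b,a')$ block, is not treated, and there the bookkeeping fails. The block $(b,a')$ is charged $0$ using only $b\le-a'$, so $a$ is never cancelled, and the same can happen again after $a'$. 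For example, $A,-A,A,-A,1,1$ satisfies the hypotheses and has sum $2=g(6)$, but your charging bounds it only by $A+0+1$.

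The missing idea is that a negative entry between two single positive entries must cancel both of them. You then have to show that the large entries can be assigned distinct negative neighbours unless the whole sequence is $+,-,+,\ldots,+$. Naming the example $A,-A,\ldots,A$ does not prove it is the only obstruction. The paper gets this from the Hall-type inequality of Lemma~\ref{lem:path-defect}: $|N(I)|\ge|I|$ for independent $I\subseteq P_m$, except the full alternating set of an odd path. It first writes $\sum_ia_i=\sum_i\sigma_i+2\sum_{h\ge1}(|I_h|-|J_h|)$, with $\sigma_i=\operatorname{sign}(a_i)$. It then applies the inequality to each threshold set $I_h=\{i:a_i\ge2h+1\}$, whose neighbourhood lies in $J_h=\{i:a_i\le-(2h+1)\}$. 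The sign part is at most $g(m)$ and each level adds at most one. A level with positive difference forces the alternating pattern, which bounds the sum by $1+(A-1)=A$.

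Within your framework, you can repair this by walking along the initial alternating chain with right-attached pairs $(a_{2j-1},a_{2j})$. Here $a_{2j-1}$ is a single positive run and $a_{2j}\le-a_{2j-1}$, so every pair has sum at most $0$. Either the chain exhausts the sequence, or after $2j$ entries it meets a negative entry or a $(1,1)$ run. If it exhausts the sequence and ends on a positive entry, then $m$ is odd, the sequence is alternating, and the sum is at most $A$; otherwise the sum is at most $0$. If it stops early, your left-attached blocks bound the remaining suffix by $g(m-2j)\le g(m)$, since $g(k+2)-g(k)\in\{0,2\}$.

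Alternatively, your fallback induction works if you carry the stronger hypothesis that the sum is at most $g(m)$ unless the sequence is $+,-,\ldots,+$. With only the lemma's own bound as inductive hypothesis, peeling off a single leading negative entry changes the parity of the length. Then $\max\{A,g(m-1)\}-1$ can exceed $g(m)$.
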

\begin{proof}
Let $\sigma_i=\operatorname{sign}(a_i)$. For $h\ge1$ put
$I_h=\{i:a_i\ge2h+1\}$ and $J_h=\{i:a_i\le-(2h+1)\}$. The adjacency
condition makes $I_h$ independent and gives $N(I_h)\subseteq J_h$.
Lemma~\ref{lem:path-defect} implies $|I_h|-|J_h|\le0$, except possibly when
$m$ is odd, $I_h$ is all odd indices, and $J_h$ all even indices; the
difference is then one. The finite level decomposition is
\[
\sum_i a_i=\sum_i\sigma_i+2\sum_{h\ge1}(|I_h|-|J_h|).
\]
If no level has positive difference, the no-three-positive condition gives
at most $\lceil2m/3\rceil$ positive signs, bounding the sum by $g(m)$.
Otherwise one exceptional level forces the entire sign sequence to alternate
$+,-,+,\ldots,+$, with sign sum one. Each level contributes at most one and
levels above $(A-1)/2$ contribute nonpositively. The sum is then at most
$1+2(A-1)/2=A$. No lower bound on negative entries is needed.
\end{proof}

\begin{proposition}\label{prop:grid-profile-bound}
Every dissociation set $S$ in $G_{m,n}$ satisfies $|S|\le F(m,n)$.
\end{proposition}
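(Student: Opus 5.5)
We need to show $|S|\le F(m,n)$, equivalently $2|S|-mn\le\max\{(m\bmod2)g(n),(n\bmod2)g(m)\}$ by \eqref{eq:parity-defect}. The plan is to reduce this to Lemma~\ref{lem:odd-profile}.

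\textbf{Case $n$ even.} Pair consecutive columns into $m\times2$ ladders. Lemma~\ref{lem:diss-ladder} applied to each transposed ladder $G_{2,m}$ gives at most $m$ vertices when $m$ is even, and at most $m+1$ when $m$ is odd. This yields $2|S|-mn\le(m\bmod2)\,n$. That is too weak when $m$ is odd, since we need $g(n)\le n$. So the real work is a row-profile argument.

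\textbf{Profile sequence.} Pair consecutive rows $2i-1,2i$ into $2\times n$ ladders, and set
\[
a_i=2|S\cap(\text{rows }2i-1,2i)|-2n\cdot\tfrac{1}{1}\quad\text{(suitably normalized)}.
\]
It is cleaner to work per row. For row $i$, put $a_i=2s_i-n$, where $s_i=|S\cap\text{row }i|$. Then $\sum_i a_i=2|S|-mn$, and each row is a dissociation set of $P_n$, so $s_i\le\lceil2n/3\rceil$. When $n$ is odd, every $a_i$ is odd and at most $g(n)=A$. When $n$ is even the $a_i$ are even, and I would run the same level argument with even thresholds instead.

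\textbf{Hypotheses of Lemma~\ref{lem:odd-profile}.}
\begin{itemize}
\item For two adjacent rows, Lemma~\ref{lem:diss-ladder} gives $s_i+s_{i+1}\le n+(n\bmod2)$, hence $a_i+a_{i+1}\le 2(n\bmod 2)$. For odd $n$ this means the sum is at most $2$, with equality only if both entries equal $1$. That is exactly the rigid equality case of Lemma~\ref{lem:diss-ladder}: full odd columns and empty even columns in both rows, giving $a_i=a_{i+1}=1$.
\item For ``no three consecutive positive'' entries: three consecutive rows each holding more than half their vertices is impossible. To see this, look at columns of the $3\times n$ block. A full column of three selected vertices is forbidden, and I would argue via Lemma~\ref{lem:diss-ladder} on the top pair and the bottom pair of rows that rigidity forces the middle row into the alternating pattern in both pairings. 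The two pairings then conflict, because the middle row's selected vertices would acquire two vertical selected neighbours.
\end{itemize}

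\textbf{Conclusion for odd $n$.} Lemma~\ref{lem:odd-profile} gives $\sum a_i\le g(m)$ if $m$ is even, and $\le\max\{g(n),g(m)\}$ if $m$ is odd. This matches \eqref{eq:parity-defect}, once we note that the first maximand vanishes for even $m$.

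\textbf{Case $n$ even, $m$ odd.} Transpose and use column profiles with the roles of $m$ and $n$ swapped. This gives $2|S|-mn\le g(n)$, the needed bound.

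\textbf{Case both even.} The column-pairing bound already gives $2|S|\le mn$.

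\textbf{Main obstacle.} The step I expect to be hardest is verifying ``no three consecutive positive'' cleanly, especially at the borderline where $a_i=1$ everywhere. I would first attempt a direct $3\times2$ local counting: any $3\times2$ window contains at most $3$ selected vertices, because four selected vertices in it produce a vertex of selected degree $2$. Summing over column pairs gives $s_{i-1}+s_i+s_{i+1}\le 3n/2$, plus a boundary term for odd $n$. This contradicts all three entries being positive, since that would require $s_{i-1}+s_i+s_{i+1}\ge3(n+1)/2$.
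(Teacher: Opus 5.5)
Your main line is the paper's proof. For odd $n$ you use the row profile $a_i=2s_i-n$ with $A=g(n)$, get the adjacent-sum condition from Lemma~\ref{lem:diss-ladder}, exclude three consecutive positive entries by rigidity, and finish with Lemma~\ref{lem:odd-profile} and \eqref{eq:parity-defect}. The both-even case is the same tiling argument. The one real difference is the case $n$ even, $m$ odd. You transpose into the odd case, which is valid: $F$ is symmetric, and Lemma~\ref{lem:odd-profile} with an even number $n$ of entries gives exactly $g(n)$. The paper instead pairs the first $m-1$ rows into even-length ladders and adds the single-row bound $\lceil 2n/3\rceil$, which is more elementary. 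The half-finished ladder-based normalization of $a_i$ and the ``even thresholds'' remark are not needed and can be deleted.

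The fallback in your ``main obstacle'' paragraph, however, is false and must not be used. A $3\times2$ window can contain four vertices of a dissociation set: take both vertices of its top and bottom rows and leave the middle row empty, which gives two disjoint horizontal edges. This already occurs in the periodic optimal set on $G_{3,3}$. The summed inequality $s_{i-1}+s_i+s_{i+1}\le 3n/2$ also fails; for example $\beta_2(G_{3,6})=10>9$.

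In fact no bound on the three-row total alone can exclude three positive entries. We have $\beta_2(G_{3,9})=15=3(n+1)/2$ and $\beta_2(G_{3,11})=19>18$. What is impossible is only the specific distribution with $(n+1)/2$ vertices in each row, and excluding it needs the equality structure.

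Your bullet-point argument supplies that structure once you make its first step explicit. Suppose $a_{i-1},a_i,a_{i+1}>0$. Then both adjacent sums are at least $2$, so the first hypothesis forces all three entries to equal $1$. This puts both ladders in the equality case of Lemma~\ref{lem:diss-ladder}. Hence all three rows contain every odd column, and the middle vertex of column $1$ has two selected neighbours. That is precisely the paper's step.
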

\begin{proof}
Let $x_i$ be its row counts. Each row is a path, so
$x_i\le\lceil2n/3\rceil$. Suppose first that $n$ is odd. By
Lemma~\ref{lem:diss-ladder}, $x_i+x_{i+1}\le n$ unless both counts equal
$(n+1)/2$. Three successive counts cannot all have this value: equality in
both adjacent ladders would force three vertically consecutive selected
vertices in an odd column. Thus $a_i=2x_i-n$ satisfies
Lemma~\ref{lem:odd-profile}, with odd bound $A=g(n)$. It gives
\[
2|S|-mn\le
\begin{cases}g(m),&m\text{ even},\\
\max\{g(m),g(n)\},&m\text{ odd},\end{cases}
\]
which is the desired bound by \eqref{eq:parity-defect}.

If $m,n$ are both even, tile by $2\times2$ squares to obtain $|S|\le mn/2$.
If $n$ is even and $m$ is odd, pair the first $m-1$ rows into ladders and use
the path bound on the last row:
\[
|S|\le\frac{m-1}{2}n+\left\lceil\frac{2n}{3}\right\rceil
=\frac{mn+g(n)}2=F(m,n).
\]
This also covers $m=1$ and completes all parity cases.
\end{proof}

\begin{lemma}[Closed-ladder representatives]\label{lem:closed-ladder}
For a feasible pair in $G_{2,n}$, $|T|+|M|\le2\lceil n/2\rceil$.
When $n$ is odd, equality forces $M=\varnothing$, both odd-column vertices
in $T$, and every even column empty.
\end{lemma}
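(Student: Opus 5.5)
The plan is to reuse the column-type bookkeeping from the proof of Theorem~\ref{thm:ladder} and then extract the rigidity statement from its equality case. Label each vertex of a feasible pair as $T$, $P$ (in $V(M)$) or $B$ (unused). Every column then has one of the types $TT$, $BB$, $PP$, $TB$, $TP$, $PB$, and we count columns of types $TT,BB,TP,PB$ by $a,b,u,v$. Since each $T$ vertex weighs $1$ and each matching edge contributes $1$ for two $P$ vertices, we have
\[
2(|T|+|M|)-2n=|T|-|B|=2(a-b)+u-v,
\]
because $PP$ and $TB$ columns contribute zero. The proof then reduces to two inequalities.

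The first inequality is $u\le v$. In a $TP$ column the $T$ vertex already has its vertical neighbor occupied. Hence the $P$ vertex must be matched horizontally, and its partner column must contain a $P$ next to nothing else occupied beside the $T$'s row-mate. More precisely, the partner's vertical mate must be unoccupied, since otherwise the $T$ vertex sees two occupied neighbors. So the partner column is of type $PB$. I also need to rule out that the partner column is $TP$ or $PP$; both would give the $T$ vertex a second occupied neighbor only in the case where the occupied vertex lies in the $T$'s row, so I will check the row geometry carefully here. The map from $TP$ columns to their partner columns is injective because matching partners are unique.

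The second inequality is $a\le b$, obtained from Lemma~\ref{lem:path-defect}. The $TT$ columns form an independent set $I$ of the column path $P_n$, since two adjacent $TT$ columns would give a $T$ vertex two occupied neighbors. Every column adjacent to a $TT$ column is $BB$, so $N(I)$ consists of $BB$ columns and $b\ge|N(I)|\ge a$. The only exception is $n$ odd with $I$ the full set of odd columns, where $a=b+1$.

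Combining the two inequalities gives $2(|T|+|M|)\le 2n$ in the non-exceptional case. In the exceptional case every even column is $BB$ and every odd column is $TT$, so $u=v=0$ and $|T|+|M|=n+1$. This yields $|T|+|M|\le2\lceil n/2\rceil$.

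For the rigidity claim with $n$ odd, equality $|T|+|M|=n+1$ forces $2(a-b)+u-v=2$. Since $u-v\le0$ and $a-b\le1$, this requires $a-b=1$, which is exactly the exceptional configuration of Lemma~\ref{lem:path-defect}. That configuration fixes every column type as $TT$ or $BB$, so $M=\varnothing$ and the even columns are empty.

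The main obstacle is the case check in the $u\le v$ step, namely confirming that a horizontally matched partner of a $TP$ column really lies in a $PB$ column rather than a $PP$ or $TP$ column. I will settle it by examining the two rows separately.
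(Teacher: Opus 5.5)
Your proposal is correct and follows the paper's own route: the paper proves this lemma by reusing the $TT,BB,TP,PB$ column accounting from Theorem~\ref{thm:ladder}. That accounting uses the injection for $u\le v$, Lemma~\ref{lem:path-defect} for $a\le b$ outside the full alternating exception, and the observation that equality for odd $n$ forces that exception. The case check you flag as the main obstacle is already settled by your own ``more precisely'' sentence: the partner $P$ lies in the row of the original $P$, so the other vertex of the partner column is a row-neighbour of the $T$ vertex and must be blank, which rules out $TP$ and $PP$ at once.
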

\begin{proof}
Use the $TT,BB,TP,PB$ column accounting in the proof of
Theorem~\ref{thm:ladder}. The injection gives $u\le v$, and
Lemma~\ref{lem:path-defect} gives $a\le b$ except for the full alternating
$TT/BB$ support. If $n$ is odd, an objective $n+1$ therefore forces that
exception, which has no matching endpoints.
\end{proof}

\begin{proposition}\label{prop:directional-profile}
Every feasible pair with horizontal $M$ satisfies $|T|+|M|\le F(m,n)$.
\end{proposition}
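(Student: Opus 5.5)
We will mirror the proof of Proposition~\ref{prop:grid-profile-bound}, with the row counts $x_i$ replaced by $c_i=|T\cap\text{row }i|+|M\cap\text{row }i|$. Because $M$ is horizontal, every matching edge lies inside one row. Restricting $(T,M)$ to any set of consecutive rows therefore keeps a feasible pair: deleting vertices only lowers the degrees of $T$-vertices into $T\cup V(M)$. So each row, and each pair of adjacent rows, carries a feasible pair in its own right. The plan is to prove the same three profile facts for the $c_i$ that were used for dissociation sets, and then reuse the parity bookkeeping verbatim.

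\textbf{Single row.} A feasible pair on $P_n$ with horizontal matching has $|T|+|M|\le\lceil 2n/3\rceil$. Argue by blocks of three consecutive vertices, as in Theorem~\ref{thm:pathcycle}. A matching edge uses two vertices for one unit, and a $T$ vertex has at most one occupied neighbour. It should follow that each window of three vertices contributes at most two. Alternatively, take $T\cup\{$one endpoint of each edge$\}$: this set has no three consecutive chosen vertices, and the objective equals its size. We check that claim by a short case analysis.

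\textbf{Two adjacent rows.} Lemma~\ref{lem:closed-ladder} gives $c_i+c_{i+1}\le 2\lceil n/2\rceil$. For odd $n$, equality $n+1$ forces $M=\varnothing$ on those rows, both odd-column vertices in $T$, and every even column empty. In particular $c_i+c_{i+1}\le n$ unless $c_i=c_{i+1}=(n+1)/2$ with this rigid $TT/BB$ structure.

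\textbf{Three consecutive rows.} We must rule out three successive rows all achieving $(n+1)/2$ with both adjacent pairs at equality. In that case rows $i,i+1,i+2$ would all have $T$ at every odd column. A middle $T$ vertex would then have two occupied vertical neighbours, violating feasibility. This is the analogue of the dissociation rigidity argument.

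\textbf{Conclusion.} With these facts, for odd $n$ set $a_i=2c_i-n$. These are odd integers at most $g(n)=2\lceil 2n/3\rceil-n$ by the single-row bound. Adjacent sums are $\le 0$ unless both equal one, by the ladder bound and its rigidity. No three consecutive entries are positive, by the third step. Lemma~\ref{lem:odd-profile} then gives $2(|T|+|M|)-mn\le g(m)$ or $\max\{g(m),g(n)\}$, which equals $F$ by \eqref{eq:parity-defect}. For even $n$, Lemma~\ref{lem:closed-ladder} applied to paired rows gives $n$ per ladder. When $m$ is also even this yields $mn/2$. When $m$ is odd, add the single-row bound for the leftover row, obtaining $(mn+g(n))/2=F(m,n)$.

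The main obstacle I expect is the single-row bound, together with confirming that Lemma~\ref{lem:closed-ladder}'s equality characterization is strong enough that no horizontal matching edge can sneak into an equality ladder. That second point is exactly what makes the three-row exclusion go through. If the single-row window argument proves awkward, I would fall back on the transfer to a dissociation-like set on the path described in the first step.
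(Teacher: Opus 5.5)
Your proposal is correct and follows essentially the same route as the paper: one- and two-row restrictions stay feasible because no horizontal edge is cut, and the single-row bound comes from conversion to a dissociation set. Your set $T\cup\{\text{one endpoint per edge}\}$ is exactly what the paper's promote-$u$/delete-$v$ iteration produces. From there you use Lemma~\ref{lem:closed-ladder} with its rigidity for adjacent rows, exclude three consecutive equality rows via three vertically consecutive $T$ vertices, and finish with Lemma~\ref{lem:odd-profile} or row pairing together with \eqref{eq:parity-defect}.
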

\begin{proof}
Set $c_i=|T\cap\text{row }i|+|M\cap\text{row }i|$. Restriction to a row
retains all its matching edges and is feasible. In a path, remove a matching
edge $uv$, promote $u$ to $T$, and remove $v$ from the occupied set. The old
$T$ vertices lose neighbors, while $u$ has at most one remaining occupied
neighbor since its path degree is at most two. The objective is unchanged.
Iteration produces a dissociation set of size $c_i$, proving
$c_i\le\lceil2n/3\rceil$.

Restriction to two adjacent rows also cuts no matching edge. For even $n$,
Lemma~\ref{lem:closed-ladder} gives $c_i+c_{i+1}\le n$. Pairing rows and,
if needed, using the final single-row bound gives $F(m,n)$ as in
Proposition~\ref{prop:grid-profile-bound}.
For odd $n$, the same lemma gives $c_i+c_{i+1}\le n$ except when both counts
equal $(n+1)/2$. Three successive counts cannot all have this value, since the
two forced ladder supports would create three consecutive vertical $T$
vertices. Apply Lemma~\ref{lem:odd-profile} to $2c_i-n$ with bound $g(n)$,
and use \eqref{eq:parity-defect}. This proves the result for every parity.
\end{proof}

%% file: absorption.tex
\section{Residual geometry for mixed matchings}\label{app:absorption}

Throughout this appendix, $m,n$ are positive even integers and $(T,M)$ is a
feasible representative pair on the full rectangle $G_{m,n}$. Write
$P=V(M)$, $A=T\cup P$, $B=V\setminus A$, $H=mn/2$, $D=H-|T|$, and
$q=|T|+|M|-H$. Tile the rectangle by aligned $2\times2$ squares.

\subsection{Local capacities and the exact residual identity}

For a tile $Q$ let $t_Q=|T\cap Q|$ and $\delta_Q=2-t_Q$.
We have $t_Q\le2$, since three $T$ vertices in a square would give one of
them two occupied neighbors. A tile is \emph{saturated} if $t_Q=2$ and
\emph{deficient} otherwise. A saturated tile containing a $P$ vertex has
diagonally opposite $T$ vertices, one $P$ vertex and one $B$ vertex. It
contains no internal matching edge and at most one matching endpoint.

\begin{lemma}[Forced blank at an attachment]\label{lem:forced-blank}
If a matching edge joins a saturated tile to another tile, the companion
corner of the receiving tile along their shared side is in $B$.
No matching edge joins two saturated tiles. More generally, $P$ vertices
in distinct saturated tiles cannot be adjacent across a shared side.
\end{lemma}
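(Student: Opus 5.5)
The plan is to argue directly from the feasibility condition, using only the rigid local structure of a saturated tile recorded just before the lemma. First I will make that structure explicit. Let $Q$ be a saturated tile containing a $P$ vertex $p$. As noted above, its two $T$ vertices are diagonally opposite, and $p$ and the $B$ vertex form the other diagonal. Since $p$ lies on the diagonal complementary to the $T$ pair, both neighbours of $p$ inside $Q$ are $T$ vertices. This observation, which I will call $(\ast)$, drives every case.

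For the first assertion, suppose the matching edge $pp'$ leaves $Q$ through a side shared with a tile $R$. Let $c$ be the other corner of $Q$ on that side and $c'$ the other corner of $R$ on it. Then $c$ is adjacent to both $p$ and $c'$, and $c'$ is adjacent to $p'$. The companion corner in the statement is $c'$. By $(\ast)$, $c\in T$, and $c$ already has the occupied neighbour $p\in P$. The feasibility condition allows each $t\in T$ at most one neighbour in $T\cup V(M)$, so $c'$, being another neighbour of $c$, cannot lie in $T\cup P$. Hence $c'\in B$.

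For the more general statement, let $p\in Q$ and $p'\in R$ be adjacent $P$ vertices in distinct saturated tiles sharing a side. This covers the case where $pp'\in M$, and also the case where $p$ and $p'$ are matched elsewhere, because only the occupancy of $p$ is used. Define $c,c'$ as before. Applying $(\ast)$ to $Q$ gives $c\in T$. Applying $(\ast)$ to $R$, the corner $c'$ is a neighbour of $p'$ inside $R$, so $c'\in T$. Then $c\in T$ has two occupied neighbours, $p$ and $c'$, which contradicts feasibility. Alternatively, the first part gives $c'\in B$ directly, while $(\ast)$ in $R$ gives $c'\in T$. The middle claim, that no matching edge joins two saturated tiles, is the special case of the general statement where $pp'\in M$. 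The only point requiring care is the labelling of corners on the shared side, so that $c$ and $c'$ are indeed adjacent to $p$ and $p'$ respectively. I do not expect a genuine obstacle here; the lemma is a short local case check.
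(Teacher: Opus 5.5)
Your proof is correct and is essentially the paper's argument. In both, the side-companion $c$ of the attached $P$ corner is a $T$ vertex already using its one allowed occupied neighbour, which forces $c'$ into $B$. In a saturated receiving tile, $(\ast)$ would instead make $c'$ a $T$ vertex, and since only the occupancy of $p$ is used, this covers arbitrary adjacent $P$ corners. You simply spell out the diagonal structure and the corner labelling that the paper leaves implicit.
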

\begin{proof}
The companion corner on the saturated side is a $T$ vertex already adjacent
to its within-tile $P$ vertex. Its neighbor across the side must therefore
be unoccupied. In a saturated receiving tile that corner would instead be
$T$. The same argument applies to any adjacency of saturated $P$ corners,
whether or not that adjacency is a matching edge.
\end{proof}

For each deficient tile let $h_Q$ count internal matching edges and $s_Q$
matching edges to saturated tiles. Define $r_Q=\delta_Q-h_Q-s_Q$.
The residual multigraph $\Gamma$ has the deficient tiles as vertices and
one edge for each matching edge between two distinct deficient tiles;
parallel edges and endpoint corners are retained. Put
\[
C=|E(\Gamma)|,\quad R=\sum_Qr_Q,\quad h=\sum_Qh_Q,\quad
s=\sum_Qs_Q,\quad K=h+s.
\]

\begin{lemma}[Residual capacities]\label{lem:capacities}
Every $r_Q$ lies in $\{0,1,2\}$. If $r_Q=0$, then
$\deg_\Gamma(Q)\le1$; otherwise $\deg_\Gamma(Q)\le2r_Q$.
Every zero-capacity tile of positive residual degree has $s_Q\ge1$.
\end{lemma}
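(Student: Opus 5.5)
The plan is a corner count inside a single deficient tile $Q$, followed by a short case analysis on $t_Q\in\{0,1\}$. Because $m,n$ are even, the aligned $2\times2$ tiles partition $V(G_{m,n})$. Hence every matching edge with an endpoint in $Q$ is of exactly one of three kinds: internal to $Q$, joining $Q$ to a saturated tile, or joining $Q$ to another deficient tile. Writing $p_Q=|P\cap Q|$ and $b_Q=|B\cap Q|$, this gives
\[
p_Q=2h_Q+s_Q+\deg_\Gamma(Q),\qquad t_Q+p_Q+b_Q=4 .
\]
So it suffices to produce enough forced $B$ corners in $Q$ to bound $2h_Q+s_Q+\deg_\Gamma(Q)$. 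The source of forced blanks is Lemma~\ref{lem:forced-blank}: every matching edge from a saturated tile into $Q$ forces the companion corner of $Q$ along the shared side to lie in $B$.

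\emph{Case $t_Q=1$, so $\delta_Q=1$.} Let $t$ be the $T$ corner. Its two tile-neighbours cannot both be occupied, by feasibility, so $b_Q\ge1$ and $p_Q\le2$. Next I show $s_Q\le1$. A $P$ corner $x$ adjacent to $t$ in $Q$ has two outward sides. Across one of them the companion corner is $t\in T$; across the other it is the diagonal corner of $t$, which must then be $P$ (since $b_Q\ge1$ is already used by the other neighbour of $t$, $p_Q\le 2$ forces the remaining corners). In either case Lemma~\ref{lem:forced-blank} forbids $x$ from being matched to a saturated tile. Hence only the diagonal corner can carry a saturated attachment, giving $s_Q\le1$ and $h_Q+s_Q\le1$. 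Now enumerate:
\begin{itemize}[leftmargin=*]
\item $h_Q=1$: then $s_Q=\deg_\Gamma(Q)=0$ and $r_Q=0$.
\item $h_Q=0$, $s_Q=1$: then $r_Q=0$ and $\deg_\Gamma(Q)\le 2-1=1$.
\item $h_Q=s_Q=0$: then $r_Q=1$ and $\deg_\Gamma(Q)\le2=2r_Q$.
\end{itemize}
In every case a zero-capacity tile with positive residual degree has $s_Q\ge1$.

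\emph{Case $t_Q=0$, so $\delta_Q=2$.} A single $B$ corner can serve as the companion for at most two attachments, namely those across its two sides. So the forced blanks give $b_Q\ge\lceil s_Q/2\rceil$, and therefore
\[
2h_Q+s_Q+\lceil s_Q/2\rceil+\deg_\Gamma(Q)\le4 .
\]
This forces $s_Q\le2$, and $s_Q=2$ forces $h_Q=0$. Hence $h_Q+s_Q\le2$, i.e.\ $r_Q\in\{0,1,2\}$. For the degree bound, $\lceil s_Q/2\rceil\ge s_Q$ holds exactly when $s_Q\le1$, so in that range
\[
\deg_\Gamma(Q)\le4-2h_Q-2s_Q=2r_Q .
\]
Among these subcases, $r_Q=0$ occurs only for $(h_Q,s_Q)=(2,0)$ or $(1,1)$, and both give degree $0$. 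The remaining subcase $s_Q=2$, $h_Q=0$ has $r_Q=0$ and $\deg_\Gamma(Q)\le4-2-1=1$, with $s_Q\ge1$ as required.

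\emph{Main obstacle.} The delicate step is applying Lemma~\ref{lem:forced-blank} correctly. In the case $t_Q=1$, it must rule out saturated attachments at the $P$ neighbours of $t$. In the case $t_Q=0$, the companion counting $b_Q\ge\lceil s_Q/2\rceil$ must be justified, which requires checking which sides of a given corner can host attachments. I would settle both by fixing coordinates for the four corners of $Q$ and checking each side explicitly.
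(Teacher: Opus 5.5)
Your overall plan matches the paper's proof: the same split $t_Q\in\{0,1\}$ and the same use of Lemma~\ref{lem:forced-blank}. The identity $p_Q=2h_Q+s_Q+\deg_\Gamma(Q)$ makes the endpoint counting explicit. Your $t_Q=0$ case is fully correct. The inequality $s_Q\le 2b_Q$ neatly packs the paper's three exclusions into one count: three attachments, one internal edge plus two attachments, and two internal edges.

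The $t_Q=1$ case, however, contains a false step. Put $t$ at $\mathrm{TL}$ and a $P$ corner $x$ at $\mathrm{TR}$. Feasibility forces $\mathrm{BL}\in B$, but nothing forces the diagonal corner $\mathrm{BR}$ to be $P$; the bound $p_Q\le2$ is only an upper bound. Consider $\mathrm{TL}=T$, $\mathrm{TR}=P$, $\mathrm{BL}=\mathrm{BR}=B$. Here $x$ can be matched rightward into a saturated tile with $P$ at its top-left, $T$ at its top-right and bottom-left, and $B$ at its bottom-right. This configuration is feasible, and the blank forced by Lemma~\ref{lem:forced-blank} is exactly $\mathrm{BR}$. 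So two of your claims are wrong in general: that Lemma~\ref{lem:forced-blank} forbids $x$ from attaching, and that ``only the diagonal corner can carry a saturated attachment''.

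The conclusion $h_Q+s_Q\le1$ still holds, but you need to split on $p_Q$.
\begin{itemize}
\item If $p_Q\le1$, then $h_Q=0$ and $s_Q\le1$ trivially.
\item If $p_Q=2$, the two $P$ corners must be one neighbour $x$ of $t$ and the diagonal of $t$, since both neighbours of $t$ cannot be occupied. Only in this case are $x$'s two companions $t$ and a $P$ corner, so $x$ cannot attach. Moreover, $h_Q=1$ uses both $P$ corners and forces $s_Q=0$.
\end{itemize}
This is precisely the paper's argument (``if both occur they are adjacent; one has $T$ and $P$ as its two within-tile neighbors''). With that repair, your enumeration of the three subcases goes through unchanged.
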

\begin{proof}
If $t_Q=1$, its $T$ corner allows at most one adjacent $P$ and at most the
opposite $P$, hence at most two $P$ vertices. If both occur they are adjacent.
One has $T$ and $P$ as its two within-tile neighbors and cannot be attached
to a saturated tile by Lemma~\ref{lem:forced-blank}. Thus $h_Q+s_Q\le1$.
When $h_Q=1$ no external endpoint remains; when $s_Q=1$ at most one remains;
and when $r_Q=1$ there are at most two residual endpoints.

If $t_Q=0$, every attachment to a saturated tile requires an adjacent $B$
corner. Three attachments would leave at most one $B$, adjacent to only
two corners, so they are impossible. One internal edge and two attachments
would fill the tile and leave no required $B$. Two internal edges leave
no external endpoint. Hence $h_Q+s_Q\le2$. If $r_Q=2$ there are at most
four endpoints. If $r_Q=1$, either an internal edge uses two corners or an
attachment and its required blank use two, leaving at most two endpoints.
If $r_Q=0$, the cases $(h_Q,s_Q)=(2,0),(1,1)$ leave no endpoint; the case
$(0,2)$ leaves at most one because at least one corner is blank. This also
proves the last assertion.
\end{proof}

A zero-capacity tile of positive residual degree will be called a leaf tile.
Up to square symmetries, its corners are one of
\[
\begin{pmatrix}T&c\\B&s\end{pmatrix},\qquad
\begin{pmatrix}B&s\\s&c\end{pmatrix},
\]
where $c$ is the residual matching endpoint and each $s$ is matched to a
saturated tile. Every such $s$ has one within-tile $B$ neighbor and one
within-tile occupied neighbor, so its attachment must leave through the
side containing that $B$ corner.

\begin{lemma}[Leaf exclusion]\label{lem:leaf-exclusion}
Residual endpoints of two distinct leaf tiles cannot be physically adjacent
across a shared side. In particular, no residual edge joins two leaf tiles.
\end{lemma}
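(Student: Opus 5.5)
The plan is to argue by contradiction, working directly from the two leaf-tile shapes listed before the statement. Suppose $Q$ and $Q'$ are distinct leaf tiles sharing a side, with residual endpoints $c\in Q$ and $c'\in Q'$ adjacent across that side. First I would pin down where $c$ sits. In the shape $\begin{pmatrix}T&c\\B&s\end{pmatrix}$, $c$ is adjacent within the tile to a $T$ corner and to the $s$ corner. That $T$ corner already has the occupied neighbor $c$, so by feasibility every other neighbor of it is unoccupied. In the shape $\begin{pmatrix}B&s\\s&c\end{pmatrix}$, $c$ is adjacent within the tile to two $P$ vertices $s$, both matched out of the tile toward saturated tiles. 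In either case, $c$ has exactly one matching partner, and that partner lies outside $Q$ because $h_Q=0$.

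Next I would split according to whether the adjacency $cc'$ is itself the matching edge of $c$.

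\emph{Case 1: $cc'$ is not the matching edge.} Then $c$ is matched out of $Q$ through its other external side. In the first shape, the external sides at $c$ are the top and right sides (taking $c$ as the top-right corner). I would examine the companion corner of $Q'$ along the shared side, namely the $Q'$-corner adjacent to the $T$ corner or to the $s$ corner of $Q$. If the companion of $c'$ sits across from the $T$ corner of $Q$, it must be $B$. I would then read off the shape of $Q'$ and check that the two listed shapes, placed with $c'$ on the shared side, always put an occupied corner (a $T$ or an $s$) next to $c'$ along that side. In the $T$-subcase this contradicts feasibility at the $T$ vertex of $Q$ or of $Q'$, since a $T$ vertex can have at most one occupied neighbor. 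In the $s$-subcase, the $s$ corner must exit through the side containing its within-tile $B$ corner. That exit side is forced to be perpendicular to the shared side, so the corner of $Q'$ facing it is not the required blank. This would contradict Lemma~\ref{lem:forced-blank}, which says the receiving companion corner must be blank.

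\emph{Case 2: $cc'$ is the matching edge.} Then it is a residual edge joining two leaf tiles, and it is the only residual edge at each of them. The plan is a symmetric local check. The corner of $Q$ next to $c$ along the shared side is $T$ or $s$, and so is the corner of $Q'$ next to $c'$. These two corners are physically adjacent. If either is $T$, that $T$ vertex sees $c$ (or $c'$) and also the occupied corner opposite it, which violates $d_A(t)\le1$. If both are $s$ corners, each is a $P$ vertex in a leaf tile adjacent across the side, and each must exit through its $B$-side, which points away from the shared side. I would then use the forced-blank rule of Lemma~\ref{lem:forced-blank} applied at the saturated receiving tiles, together with the occupancy pattern, to derive a contradiction.

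The main obstacle is the bookkeeping in this both-$s$ subcase. Two adjacent $P$ vertices are not themselves forbidden, so the contradiction must come from the exit geometry. Concretely, the $B$ corner of each tile has to lie on the side where its $s$ exits, and I expect this to force the $B$ corners to line up with $c$ and $c'$ in a way incompatible with both shapes. I would enumerate orientations by square symmetry, reducing to a handful of placements, and verify each one by hand. The "in particular" clause then follows immediately, since a residual edge between two leaf tiles is a physical adjacency of their residual endpoints.
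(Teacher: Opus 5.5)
Your first step matches the paper's. In both leaf shapes, every within-tile neighbour of the residual endpoint is occupied. So the two companion corners along the shared side are occupied, and neither can be $T$, since it would see its own $c$ (or $c'$) and the occupied corner opposite. The gap is the remaining subcase, where both companions are $s$; this is where the lemma's content lies.

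In Case~1 your stated contradiction is not valid. Normalize $c=Q.\mathrm{TR}$ with $Q'$ on the right, so the companion $x=Q.\mathrm{BR}$ is an $s$ corner leaving downward. The blank that Lemma~\ref{lem:forced-blank} demands for this attachment is the companion of $x$ in $Q$ along the exit side, namely $Q.\mathrm{BL}$, and that corner is blank. No corner of $Q'$ is required to be blank, so nothing is contradicted.

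In Case~2 you leave the step open, and the mechanism you predict cannot work. The shapes are mutually compatible: $Q=\begin{pmatrix}T&c\\B&s\end{pmatrix}$ beside its mirror image $Q'=\begin{pmatrix}c'&T\\s&B\end{pmatrix}$ is locally feasible. Each $T$ has one occupied neighbour, and each $s$ has a blank on its exit side. So no contradiction can come from where the $B$ corners sit inside $Q\cup Q'$.

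The missing idea is to follow the two attachments one tile further. Both $s$ corners lie at the same end of the shared side. Each leaves through the side containing its $B$ corner, away from $c$ and $c'$. So both cross the same line, here downward, into the saturated tiles $Z$ and $Z'$ directly below $Q$ and $Q'$; these tiles lie inside the rectangle because the attachments exist. The attachments enter at $Z.\mathrm{TR}$ and $Z'.\mathrm{TL}$, which are $P$ corners of distinct saturated tiles adjacent across the side shared by $Z$ and $Z'$.

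This is exactly what the last clause of Lemma~\ref{lem:forced-blank} forbids. Concretely, $Z.\mathrm{BR}$ is a $T$ vertex that already sees its own $P$ corner $Z.\mathrm{TR}$ and also sees the occupied $T$ corner $Z'.\mathrm{BL}$. Your remark that adjacent $P$ vertices are not in themselves forbidden therefore overlooks the very clause of the cited lemma that finishes the argument.

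Note also that this argument never uses whether $cc'$ is the matching edge, so the split into Cases~1 and~2 is unnecessary; the paper treats both at once. The ``in particular'' clause then follows as you say.
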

\begin{proof}
Both companion corners along the side are occupied. Neither can be $T$,
since it would have its within-tile $c$ neighbor and the occupied opposite
companion. Hence both are attachment endpoints $s$. Their forced attachments
leave on the same perpendicular side, entering adjacent saturated tiles at
adjacent $P$ corners, contrary to Lemma~\ref{lem:forced-blank}. Existence of
the attachments ensures that these tiles lie inside the rectangle.
\end{proof}

Every matching edge is internal to a deficient tile, joins one to a saturated
tile, or is residual. Consequently
\begin{equation}\label{eq:residual-identities}
|M|=K+C,\qquad D=R+K,\qquad q=C-R,\qquad D=C-q+K.
\end{equation}
If $\ell$ is the number of leaf tiles, Lemma~\ref{lem:capacities} gives
\begin{equation}\label{eq:attachment-count}
s\ge\ell,\qquad K\ge h+\ell.
\end{equation}

\subsection{Two-edge paths and their companion tiles}

A \emph{leaf port} in a positive-capacity tile is a residual endpoint whose
matching partner lies in a leaf tile. Its side companion is the other corner
along the side crossed by that matching edge.

\begin{lemma}[Companion exclusion]\label{lem:companion-exclusion}
If a residual edge from a leaf enters a tile $Q$, its side companion in $Q$
cannot be the endpoint of another residual edge from a leaf. Every tile
therefore has at most two leaf ports.
\end{lemma}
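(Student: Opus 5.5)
The plan is to work entirely with the two leaf-tile corner patterns listed just before Lemma~\ref{lem:leaf-exclusion}, together with the forced-blank rule of Lemma~\ref{lem:forced-blank}.

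\emph{Setup.} Let $e=cp$ be a residual edge from a leaf tile $L$, with $c\in L$ the residual endpoint and $p\in Q$ its partner, where $Q$ is a deficient tile. The edge crosses the shared side of $L$ and $Q$. Let $c'$ be the side companion of $c$ in $L$ and $p'$ the side companion of $p$ in $Q$, so that $c'p'$ is the other physical edge across that side.

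\emph{Step 1: the corner $c'$ is occupied.} In the first leaf pattern, $c$ is adjacent within $L$ to $T$ and to an $s$. In the second, $c$ is adjacent within $L$ to two $s$ vertices. So whichever corner of $L$ is the side companion $c'$, it is either a $T$ vertex or an attachment endpoint $s$.

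\emph{Step 2: $c'$ cannot be $T$.} Suppose $c'\in T$. It is already adjacent to the occupied vertex $c$ inside $L$, so by feasibility its neighbour $p'$ across the side must be in $B$. Then $p'$ is not an endpoint of anything, and the claim holds in this case.

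\emph{Step 3: $c'$ is an attachment endpoint $s$.} By the remark after Lemma~\ref{lem:capacities}, its attachment must leave through the side containing its within-tile $B$ corner. That side is perpendicular to the $L$--$Q$ side, because the side toward $Q$ contains $c$, which is occupied. Now suppose $p'$ were itself a residual endpoint of an edge from another leaf tile $L'$. I would argue as in Lemma~\ref{lem:leaf-exclusion}. The edge from $p'$ cannot go to $L$, since $c'$ is already matched to a saturated tile. So $p'$ is matched across another side of $Q$, into a leaf $L'\ne L$.

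\emph{Main obstacle.} The hardest part is the geometric case analysis that rules out this configuration. Here $p$ and $p'$ are both $P$ vertices, and each is matched outward from $Q$: $p$ to $L$ and $p'$ to $L'$. So the edge $pp'$ inside $Q$ is not a matching edge, and the two $P$ vertices sit side by side in $Q$.

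First I would check the capacity constraints on $Q$. If $t_Q=1$, the $T$ corner of $Q$ has at most one occupied neighbour, which strongly restricts the layout. If $t_Q=0$, the blanks required by attachments bound the configuration.

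The decisive case is when $L'$ lies along the side of $Q$ perpendicular to the $L$--$Q$ side, at the corner $p'$. Then $L$, $Q$ and $L'$ meet around a common grid point together with a fourth tile $W$. The attachment from $c'$ points toward $W$, and I expect it to land in $W$. The leaf $L'$ has a side-companion structure mirroring that of $L$, so it would also send an attachment into $W$, and the two attachment endpoints would be adjacent $P$ corners of $W$. If $W$ is saturated this contradicts Lemma~\ref{lem:forced-blank}. I still need to confirm that $W$ receiving attachments from leaves forces it to be saturated, which follows from the definition of $s$.

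If instead $L'$ lies opposite $L$ across $Q$, then $p'$ is not adjacent to the $L'$ side in the required way, so a residual edge from $p'$ into $L'$ crosses a side on which $p'$ does not lie. I would rule this out directly.

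\emph{Corollary.} The leaf ports of a tile $Q$ come in pairs of side companions across its four sides. The companion exclusion forbids both members of such a pair from being leaf ports. Each corner lies on two sides, so a short matching count, at most one port per companion pair, should give at most two leaf ports. I would make this count explicit.
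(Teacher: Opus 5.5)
Your Steps 1--3 are correct and match the paper's opening. The companion $c'$ is occupied, and it cannot be $T$ once $p'$ is occupied. So it is an attachment leaving $L$ through the side perpendicular to the $L$--$Q$ side, into a saturated tile $W$. Then $p'$ must be matched across its own perpendicular side into a tile $L'$ adjacent to $W$. Your ``opposite'' case is vacuous, since a corner of $Q$ has neighbours only across the two sides through it.

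\textbf{The gap is in your decisive step.} Normalize $L$ above $Q$, with $c=L.\mathrm{BL}$, $c'=L.\mathrm{BR}$ and $p'=Q.\mathrm{TR}$. Then $W$ lies to the right of $L$, with $c'$ matched to $W.\mathrm{BL}$. Also $L'$ lies to the right of $Q$, with leaf endpoint $L'.\mathrm{TL}$. The configuration is not symmetric: at the common grid point $L$ has its attachment $c'$, but $L'$ has its residual endpoint itself.
\begin{itemize}
\item The side companion of $L'.\mathrm{TL}$ along the $Q$--$L'$ side is $L'.\mathrm{BL}$. Its attachment, if it has one, leaves downward, away from $W$.
\item The corner $L'.\mathrm{TR}$ faces $W$, but it cannot attach into $W$ either. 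Lemma~\ref{lem:forced-blank} would then force its companion $L'.\mathrm{TL}$ to be blank, and that corner is occupied.
\end{itemize}
So $L'$ never sends a second attachment into $W$. The contradiction you plan, two adjacent $P$ corners in $W$, cannot arise.

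\textbf{The missing idea is to use the internal structure of $W$.} A saturated tile containing a $P$ corner has its two $T$ corners diagonal. So $W.\mathrm{BL}\in P$ forces $W.\mathrm{BR}\in T$, and this $T$ is already adjacent to $W.\mathrm{BL}$. But $W.\mathrm{BR}$ is also adjacent to $L'.\mathrm{TR}$. That corner is a within-tile neighbour of the leaf endpoint $L'.\mathrm{TL}$, so it is occupied in both leaf patterns; this is your own Step~1 observation applied to $L'$. Hence a $T$ vertex has two occupied neighbours, which is the paper's contradiction.

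\textbf{Your corollary also needs repair.} The claim ``at most one leaf port per companion pair'' is false. The lemma only constrains the companion on the side actually crossed by the leaf edge. Two adjacent leaf ports exiting through opposite sides are not excluded, and the capacity-one fork of Lemma~\ref{lem:fork} is exactly this configuration. The correct count goes as follows. Among any three corners of a square, one is adjacent to the other two. Whichever side that corner's leaf edge crosses, its companion on that side is one of the other two, contradicting the exclusion.
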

\begin{proof}
Let $c$ and $x$ be the endpoint and companion in the first leaf. If $Q$'s
companion is another leaf endpoint, it is occupied, so $x$ cannot be $T$.
It is an attachment $s$, forced to leave through the perpendicular side
away from $c$, entering a diagonally adjacent saturated tile. The second
leaf edge cannot return to the first leaf, whose residual degree is one.
It must leave $Q$ through the other side at its corner. Its leaf is adjacent
to that saturated tile; the occupied companion in this second leaf faces
a saturated $T$ already having its own $P$ neighbor, a contradiction.
Among any three corners of a square, one is adjacent to the other two.
Either outward side at that corner has one of them as companion, so three
leaf ports would violate the assertion just proved.
\end{proof}

\begin{lemma}[Capacity-one fork]\label{lem:fork}
A capacity-one tile with two residual leaf neighbors is a full $P$ tile
with one internal matching edge. Its two residual endpoints are adjacent
and exit through opposite sides. It is the center of a three-tile residual
component.
\end{lemma}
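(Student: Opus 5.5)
We work with a capacity-one tile $Q$ (so $r_Q=\delta_Q-h_Q-s_Q=1$) that has two residual neighbors $L_1,L_2$, both leaf tiles. By Lemma~\ref{lem:capacities}, $\deg_\Gamma(Q)\le2$, so these are its only residual edges. The plan is to run through the possible values of $t_Q$ and $(h_Q,s_Q)$ with $\delta_Q-h_Q-s_Q=1$ and rule out every case except $t_Q=0$, $h_Q=1$, $s_Q=0$.

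\emph{Case $t_Q=1$.} Here $h_Q=s_Q=0$. The lone $T$ corner allows at most two $P$ corners, and if there are two, they are adjacent; so at most two residual endpoints exist. Both are leaf ports. Among the occupied corners (the $T$ corner and the two $P$'s), one is adjacent to the other two. I would apply Lemma~\ref{lem:companion-exclusion} as in its closing counting step: one port's side companion would have to be the other port, or else the $T$ corner would acquire an occupied leaf endpoint across its side. The second situation contradicts $d_A(t)\le1$, since $T$ already has a within-tile $P$ neighbor. The first is forbidden by companion exclusion directly. I expect this case to close cleanly.

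\emph{Case $t_Q=0$ with $(h_Q,s_Q)=(0,1)$.} An attachment uses one $P$ corner together with its forced blank companion (Lemma~\ref{lem:forced-blank}). That leaves two corners for the two leaf ports, and these lie on the side opposite the blank or on adjacent positions. I would again derive a contradiction from companion exclusion: the two ports are adjacent corners, and at least one of them has the other as its side companion along its exit side, unless both exit through sides that avoid each other. The remaining geometry should then force a leaf tile to face the saturated tile at a $T$ corner that already has a $P$ neighbor. This repeats the mechanism of Lemma~\ref{lem:leaf-exclusion}.

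\emph{Case $t_Q=0$ with $(h_Q,s_Q)=(1,0)$.} This is the surviving case. The internal edge uses two adjacent corners and the two residual endpoints occupy the other two, so the tile is full $P$ with exactly one internal matching edge. The residual endpoints are adjacent, being the two corners of the opposite side. Each exits through the outward side at its own corner. If both exited through the shared outward side, or if one used the perpendicular side toward the other's companion, their companions would be leaf ports, contradicting Lemma~\ref{lem:companion-exclusion}. Hence they exit through opposite (parallel) sides.

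Finally, each $L_j$ has residual degree one by Lemma~\ref{lem:capacities} ($r=0$ gives degree at most $1$), and $Q$ has degree exactly two. So $\{L_1,Q,L_2\}$ is a connected component of $\Gamma$ with $Q$ as its center.

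The main obstacle is the $(0,1)$ attachment case. Its exclusion needs a careful position-by-position analysis of where the blank sits relative to the two ports, and I would organize it by the position of the attachment side.
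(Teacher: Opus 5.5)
Your cases $t_Q=1$ and $(h_Q,s_Q)=(1,0)$ are sound and essentially match the paper. One small correction in the first case: the corner facing $T$ is not a leaf endpoint. It is the leaf's side companion of $c$, and it is occupied because both leaf shapes flank $c$ by occupied corners.

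The genuine gap is the case $t_Q=0$, $(h_Q,s_Q)=(0,1)$. This case is the real content of the lemma, and you only sketch it ("should then force''). The mechanism you predict also does not occur: neither leaf shares a side with the saturated tile receiving $Q$'s attachment, so no leaf ever faces a saturated $T$ corner. Moreover, the configuration is more rigid than your hedge "or on adjacent positions'' allows. The attachment $s$ and its forced blank $B$ are adjacent, so the two ports always form the opposite side. By Lemma~\ref{lem:companion-exclusion}, they must then exit through the two perpendicular sides.

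The missing argument runs as follows.
\begin{enumerate}
\item Normalize the ports to $\mathrm{TL},\mathrm{TR}$, exiting left and right, with $s=\mathrm{BR}$ and $B=\mathrm{BL}$.
\item The attachment $s$ must leave downward. Across the right side its companion is the occupied $\mathrm{TR}$, whereas Lemma~\ref{lem:forced-blank} requires a blank companion.
\item Look at the right leaf, not the left one. The left leaf's companion faces $B$ and yields nothing.
\item The right leaf's side companion is its $\mathrm{BL}$ corner, which is occupied and adjacent to $Q.\mathrm{BR}$. It cannot be $T$, since it would see both its own $c$ and $Q.\mathrm{BR}$, so it is an attachment.
\item This attachment also leaves downward, because its other outward side leads into the deficient tile $Q$.
\item The two receiving saturated tiles are side-adjacent, and the $P$ corners receiving the attachments are adjacent across their shared side. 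This contradicts Lemma~\ref{lem:forced-blank}.
\end{enumerate}
This is indeed the mechanism of Lemma~\ref{lem:leaf-exclusion}, as you suspected. However, it must be applied to $Q$'s own attachment together with the companion of the leaf on the attachment's side. Without that identification, the $(0,1)$ case is not excluded and the lemma is not proved.
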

\begin{proof}
If $t_Q=1$, then $h_Q=s_Q=0$ and the two residual $P$ corners are adjacent.
Lemma~\ref{lem:companion-exclusion} forces their exits across opposite sides.
One side has a $T$ companion; that $T$ sees both its within-tile $P$ and the
occupied leaf companion, contradicting feasibility.
Otherwise $t_Q=0$ and $h_Q+s_Q=1$. If $h_Q=1$, all four corners are $P$
and the claimed form follows. If $s_Q=1$, the tile has three $P$ and one
$B$, with the attachment adjacent to $B$. Normalize the residual endpoints
to the upper row, exiting left and right, and the attachment to the lower
right with $B$ at lower left. That attachment leaves down. The right leaf's
companion faces occupied $P$, so it too is an attachment forced down.
Their receiving saturated tiles have adjacent $P$ corners, impossible.
\end{proof}

\begin{lemma}[Fork compensation]\label{lem:fork-compensation}
Each fork has a companion tile $S$ on the other side of its internal edge,
with $t_S=s_S=\deg_\Gamma(S)=0$, $r_S\in\{1,2\}$ and $h_S=2-r_S$.
At most $r_S$ forks share any one companion $S$.
\end{lemma}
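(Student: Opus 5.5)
We work with the explicit form of a fork from Lemma~\ref{lem:fork} and normalize coordinates. Let the fork tile $Q$ have all four corners in $P$ and one internal matching edge. Its two remaining corners are the residual endpoints, adjacent to each other and exiting through opposite sides. After a symmetry of the square, the internal edge is the lower row of $Q$. The residual endpoints are the upper-left and upper-right corners, and they exit left and right into the two leaf tiles. Since the internal edge lies along the lower side, we define the companion $S$ as the tile directly below $Q$, sharing that lower side. First we check that $S$ exists inside the rectangle. If the lower side of $Q$ were on the boundary, each lower corner would have only two possible neighbours, one inside $Q$ and one in a leaf tile. We rule this out using the leaf shapes listed after Lemma~\ref{lem:capacities}: the leaf companion facing a $P$ corner of $Q$ cannot be $T$, because $T$ would then see its within-tile $c$ as well, and an attachment $s$ would be forced out through a side that meets a saturated tile adjacent to $Q$. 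If this argument does not go through, we instead take $S$ on the side away from the internal edge and reorganize the estimate.

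Next we determine the corners of $S$. The two upper corners of $S$ face the lower corners of $Q$, which are occupied $P$ vertices already matched inside $Q$. An upper corner of $S$ in $T$ would then have an occupied neighbour in $Q$ and, lying in the upper row of $S$, its other within-tile neighbour. We examine each case. If $S$ had a $T$ corner in its upper row, that vertex would see the $Q$ corner above it. Its horizontal neighbour in $S$ would then have to be $B$ or not adjacent, and we want this to contradict the position of the leaves, using Lemma~\ref{lem:forced-blank} along the boundary between $S$ and the leaf-side tiles. The cleaner route is global: we show $S$ cannot be saturated. A saturated $S$ has diagonal $T$ vertices, so one of its upper corners is $T$ adjacent to a $P$ of $Q$, and the upper corners of $Q$ face the leaves. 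Feasibility of that $T$ forces its other neighbours to be blank, which conflicts with the leaf tile configurations. Once $t_S\le1$, a similar argument on the lower $T$ should give $t_S=0$.

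With $t_S=0$, we still need $s_S=0$ and $\deg_\Gamma(S)=0$. Any attachment or residual edge leaving $S$ starts at a $P$ corner. Upper corners of $S$ cannot be matched upward, since $Q$'s lower corners are already matched internally. Sideways exits from the upper corners would enter the tiles below the two leaves, and we will exclude these by Lemma~\ref{lem:leaf-exclusion} and Lemma~\ref{lem:companion-exclusion}. Lower corners that exit downward or sideways must be excluded by showing that they would create a second occupied neighbour for a $T$ in an adjacent saturated or leaf tile. If every corner of $S$ is then either blank or matched inside $S$, we get $r_S=\delta_S-h_S=2-h_S$. The value $r_S=0$, meaning two internal edges and a fully occupied tile, must be excluded by the same local contradiction. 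Hence $r_S\in\{1,2\}$.

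The sharing bound follows from the geometry. A fork attached to $S$ sits across one side of $S$ with its internal edge parallel to that side. The two forks on opposite sides of $S$ make both facing rows of $S$ unable to carry $P$ corners matched externally, and each fork uses up one unit of residual capacity. So at most $r_S$ forks can share $S$: if $r_S=1$, then $S$ has an internal edge, and that edge's row blocks a second fork on the neighbouring side. The main obstacle is the middle case analysis showing $s_S=\deg_\Gamma(S)=0$ and $r_S\ne0$. For this step we would first settle a complete list of leaf-tile orientations around $Q$ and then check each corner of $S$ against Lemma~\ref{lem:forced-blank}.
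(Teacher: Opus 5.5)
There is a genuine gap. You set up the right picture: the fork $Q$ has its internal edge in the lower row, and $S$ is the tile directly below. But you never find the geometric fact that drives every clause of the lemma. As a result each step stays an intention (``we want this to contradict'', ``should give $t_S=0$'', ``must be excluded by the same local contradiction''). Your boundary discussion even ends with a fallback that would put $S$ on the wrong side of the internal edge.

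The missing step is to follow the leaf companions.
\begin{enumerate}
\item Let $Q.\mathrm{TL}$ be matched to $c=L.\mathrm{TR}$ in the left leaf $L$. The side companion $L.\mathrm{BR}$ is occupied, since in both leaf shapes both within-tile neighbours of $c$ are occupied.
\item $L.\mathrm{BR}$ also faces the occupied $Q.\mathrm{BL}$, so it cannot be $T$. It is therefore an attachment $s$. This forces $L.\mathrm{BL}=B$ and sends the attachment \emph{down}, into the top-right corner of the tile $W$ below $L$.
\item Hence $W$ is saturated with $W.\mathrm{TR}=P$, and by the diagonal-$T$ structure $W.\mathrm{BR}=T$. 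Symmetrically, the tile $E$ below the right leaf has $E.\mathrm{TL}=P$ and $E.\mathrm{BL}=T$.
\end{enumerate}
These two saturated tiles flank $S$, and everything follows from them.
\begin{itemize}
\item $S$ lies in the rectangle because $W$ and $E$ do.
\item $S.\mathrm{BL}=S.\mathrm{BR}=B$, because $W.\mathrm{BR}$ and $E.\mathrm{BL}$ are $T$ vertices already seeing their own $P$.
\item Each upper corner of $S$ sees two occupied vertices, one in $Q$ and one in $W$ or $E$, so it is not $T$.
\item A $P$ upper corner can only be matched to the other upper corner, because its remaining neighbours are already matched or blank.
\end{itemize}
This gives $t_S=s_S=\deg_\Gamma(S)=0$ and $r_S=2-h_S\in\{1,2\}$ at once. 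In particular, the sideways exits are not excluded by Lemmas~\ref{lem:leaf-exclusion} and~\ref{lem:companion-exclusion}, as you propose. They are excluded simply because $W.\mathrm{TR}$ and $E.\mathrm{TL}$ are already matched.

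Your sharing argument rests on a mechanism that does not exist: a fork does not ``use up'' residual capacity of $S$. You also never exclude forks on the two sides of $S$ perpendicular to a given fork. With the picture above the count is direct:
\begin{itemize}
\item A fork on one side makes the two perpendicular neighbours of $S$ saturated, hence not forks. Any other fork sharing $S$ must therefore lie on the opposite side.
\item Each fork forces the half of $S$ away from it to be $BB$. Two forks make $S$ entirely blank, so $r_S=2$, while one fork needs only $r_S\ge1$.
\end{itemize}
Pushing the same picture further gives more. A second fork below $S$ would require $W.\mathrm{BR}$ to be the $P$ corner receiving its leaf's attachment, contradicting $W.\mathrm{BR}=T$. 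So in fact at most one fork shares $S$.
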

\begin{proof}
Orient a fork with its residual endpoints in the upper row, exiting left
and right, and its internal matching edge in the lower row. Both leaf
companions must be attachments, forced down into saturated tiles diagonally
below the fork. The full rectangle contains the intervening tile $S$.
The inward lower corners of those saturated tiles are $T$ already adjacent
to their $P$ corners, so the lower half of $S$ is $BB$. Each upper corner
of $S$ has two occupied neighbors, one above in the fork and one lateral
in a saturated tile, and hence cannot be $T$. If it is $P$, its only possible
matching partner is the other upper corner: the lower neighbor is blank and
the other two neighbors are already matched. Thus the upper half is either
$BB$ or one internal matching edge, proving the capacities.

A fork sharing $S$ lies on one of its four sides. One fork forces the
perpendicular neighboring tiles of $S$ to be saturated, excluding forks
there. Thus at most two forks share $S$, on opposite sides. Each forces
the half of $S$ away from it to be $BB$; two force all of $S$ blank and
$r_S=2$. One fork needs only $r_S\ge1$, proving the bound.
\end{proof}

\subsection{A specified port routing}

At a positive-capacity tile place one real port for each residual edge and
$2r_Q-\deg_\Gamma(Q)$ slack ports. At a leaf place one terminal $L$ port.
Pair ports locally at positive-capacity tiles. Together with the residual
edges these pairs decompose into paths and cycles. Let $a,b,c$ count paths
with terminal types $LL,HH,LH$, respectively, where $H$ denotes slack.
Length counts real residual edges. Zero-length $HH$ paths are allowed.
Endpoint counts give
\begin{equation}\label{eq:routing-counts}
\ell=2a+c,\qquad 2R-2C+\ell=2b+c,\qquad q=a-b.
\end{equation}

At capacity one the pairing is unique. At capacity two we have
$t_Q=h_Q=s_Q=0$, so the four corners are the four ports, with $P$ corners
real and $B$ corners slack. Use the following rule. If slack is present,
pair as many leaf ports as possible to distinct slack ports, then pair the
remaining ports. If all four ports are real, pair each leaf port to its
side companion and complete any remaining pair.

\begin{lemma}\label{lem:routing-consistency}
The prescribed companion pairs in a full capacity-two tile are disjoint.
The routing never pairs two leaf ports at capacity two. If an $LL$ path
pairs a leaf port to another real port in a partial capacity-two tile,
that tile has three real ports, two of them leaf ports, and one slack port.
It supplies a distinct immediate one-edge $LH$ path.
\end{lemma}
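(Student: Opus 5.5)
The plan is to treat the three assertions in turn, working inside a single capacity-two tile $Q$. Such a tile has $t_Q=h_Q=s_Q=0$, so its four corners are exactly its four ports: $P$ corners are real ports and $B$ corners are slack ports. Every $P$ corner of $Q$ is matched across a side of $Q$ into another deficient tile, since $h_Q=s_Q=0$. The two tools are Lemma~\ref{lem:companion-exclusion} (the side companion of a leaf port is not itself a leaf port, and $Q$ has at most two leaf ports) and the fact that a leaf tile has residual degree one.

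\emph{Disjointness in a full tile.} Here all four corners are real. Suppose two distinct leaf ports $x,x'$ had the same side companion $y$. Then $x$ and $x'$ are the two neighbours of $y$ in $Q$. The edge from $x$ crosses the side $xy$ into a leaf tile $L_1$, and the edge from $x'$ crosses the side $x'y$ into a leaf tile $L_2$. The two outward sides at $y$ are exactly these two sides. So the matching partner of $y$, which lies outside $Q$, is the companion corner of $L_1$ or of $L_2$. That would make $y$ a leaf port, contradicting Lemma~\ref{lem:companion-exclusion}. A leaf port also cannot be another leaf port's companion, by the same lemma. Hence the prescribed companion pairs are disjoint, and each leaf port is paired with a non-leaf real port.

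\emph{No $LL$ pairing at capacity two.} In the full case this follows from the previous paragraph. In a partial tile there is at least one slack port and at most two leaf ports, so I will split on the counts.
\begin{itemize}
\item If the number of slack ports is at least the number of leaf ports, every leaf port is paired to a distinct slack port.
\item The only other case is two leaf ports, one slack port and one non-leaf real port. Then one leaf port takes the slack port and the other must take the non-leaf real port.
\end{itemize}
In neither case are two leaf ports paired together.

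\emph{The partial-tile exception.} If a leaf port is paired to a real port in a partial tile, the leaf ports exhausted the slack ports. By the count above, the tile then has exactly three real ports, two of them leaf ports, and one slack port. The other leaf port is paired with that slack port. Together with its residual edge to the leaf, this gives a path from a leaf terminal through one real residual edge to a slack terminal, that is, an immediate one-edge $LH$ path. Distinct such tiles give distinct paths, because the slack port belongs to the tile.

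I expect the main obstacle to be the disjointness step: identifying the outward sides at $y$ correctly and checking that $y$'s partner really lands in a leaf tile. Everything else is local port counting.
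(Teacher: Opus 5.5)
Your proposal is correct. The port counting for partial tiles is the same as the paper's; the real difference is in the disjointness step.

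\textbf{The paper's route.} The paper normalizes a repeated companion to diagonal leaf ports at TL and BR sharing the companion TR. The companions inside the two leaves face the occupied corner TR, so they cannot be $T$ and must be attachments $s$. Both forced attachments would then need the same bottom-left $P$ corner of the saturated tile diagonally beyond TR, which violates the matching condition.

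\textbf{Your route.} You use the status of the shared companion $y$ itself. In a full capacity-two tile, $y$ is a $P$ corner with no internal or attachment edge, so it must be matched across one of its two sides. Both outward neighbours of $y$ lie in the two leaf tiles. Either choice gives a leaf a second residual endpoint, contradicting $\deg_\Gamma(Q)\le1$ for zero-capacity tiles; equivalently, it contradicts Lemma~\ref{lem:companion-exclusion}.

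\textbf{Comparison.} Your argument is shorter and needs only Lemma~\ref{lem:capacities} together with the full-tile structure $t_Q=h_Q=s_Q=0$. The paper's argument instead uses the leaf corner forms and the forced-attachment geometry, and never uses how $y$ itself is matched.

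\textbf{One point to make explicit.} As the statement is later used in Lemma~\ref{lem:short-charges}, ``distinct'' also means that at most one $LL$ path passes through a slack-paid tile. This follows at once from your description: such a tile has exactly two pairs, and one of them terminates at the slack port. You should still state it, as the paper does.
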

\begin{proof}
A companion cannot be another leaf port by Lemma~\ref{lem:companion-exclusion}.
A repeated companion could occur only for diagonal leaf ports. Normalize
these to the top-left and bottom-right corners, both with top-right companion.
Their leaf edges enter from above and right. In the two leaves, the companions
face occupied corners of the full tile, so both must be attachments. Their
forced attachments require the same bottom-left $P$ corner in the diagonally
upper-right saturated tile, violating the matching condition. This excludes
conflict. With slack present and at most two leaf ports, a leaf can be paired
to a real port only when there are three real ports, two leaf ports and one
slack. In that case the other leaf is paired immediately to slack. There is
only one remaining pair, so at most one $LL$ path uses this tile.
\end{proof}

Call a tile in the last case a \emph{slack-paid tile}. The routing need not
be planar: only its path decomposition and the physical corner constraints
are used. If $f_2$ counts length-two $LL$ paths, their centers must have
capacity one by Lemma~\ref{lem:routing-consistency}. They are exactly forks
of Lemma~\ref{lem:fork}. A companion tile is residual-isolated and supplies
$r_S$ zero-length $HH$ paths under any routing. Thus
\begin{equation}\label{eq:two-payment}
f_2\le b.
\end{equation}
Each such path also has its own internal matching edge at its fork center.

\subsection{Three-edge paths}

Name square corners TL, TR, BL and BR. A capacity-one tile of residual
degree two has adjacent real $P$ corners. Its other two corners are $T,B$;
an attachment $s,B$; or one internally matched $P,P$ pair, in either
possible order for the first two types. Indeed, either $t=1,h=s=0$,
or $t=0,h+s=1$; the local-capacity proof excludes every other form.

\begin{lemma}[Companion escape]\label{lem:escape}
If a three-edge $LL$ path uses a full capacity-two internal tile, its other
internal tile is slack-paid.
\end{lemma}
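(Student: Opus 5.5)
We work directly from the definitions. A three-edge $LL$ path has the form $\Lambda_1 - X - Y - \Lambda_2$, where $\Lambda_1,\Lambda_2$ are leaf tiles and $X,Y$ are its two internal positive-capacity tiles. The step from $X$ to $Y$ is a single residual edge, so $X$ and $Y$ share a side. Assume $X$ is the full capacity-two tile, so $t_X=h_X=s_X=0$ and all four corners of $X$ are real $P$ ports. At a full tile the routing pairs each leaf port with its side companion. The leaf port $p$ entered from $\Lambda_1$ is therefore paired with its side companion $p'$, and $p'$ is the endpoint of the residual edge $p'y$ into $Y$. By Lemma~\ref{lem:routing-consistency} these companion pairs are disjoint, so the configuration is well defined.

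\textbf{Step 1: fix the geometry.} Normalize $\Lambda_1$ to lie above $X$, with $p$ the TL corner of $X$. Then $p'$ is TR. Its residual edge cannot exit upward, since that side is the one already used, and it cannot enter $\Lambda_1$ because leaves have residual degree one. So it exits to the right into $Y$, reaching the TL corner $y$ of $Y$. Next examine the corner $z$ of $\Lambda_1$ above $p'$. It is the companion of $c$ in the leaf, and it faces the occupied corner $p'$. By the leaf forms listed after Lemma~\ref{lem:capacities}, and the argument used in Lemma~\ref{lem:leaf-exclusion}, $z$ cannot be $T$. Hence $z$ is an attachment $s$, forced rightward into the saturated tile diagonally above-right of $X$, that is, the tile directly above $Y$. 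The $T$ corner of that saturated tile adjacent to the attachment's $P$ lies above TL or TR of $Y$. The same Lemma~\ref{lem:forced-blank} reasoning then forces the corner of $Y$ below that $T$ corner to be blank, or at least prevents it from being $T$.

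\textbf{Step 2: show $Y$ has capacity two and the slack-paid shape.} First rule out $r_Y=1$. A capacity-one tile of degree two has adjacent real $P$ corners and a partner pair $(T,B)$, $(s,B)$ or $(P,P)$. With $y$ at TL facing the full tile $X$ and the $Y$-corner facing the saturated tile above, check each form. A $T$ corner would see two occupied neighbors. An $s$ corner would need to exit through a side whose adjacent tiles are already $X$ or saturated with $P$ corners, which violates Lemma~\ref{lem:forced-blank}. The internal $PP$ option would make the second leaf's residual edge reproduce the fork geometry of Lemma~\ref{lem:fork}, which is impossible because $Y$ is not the center of a three-tile component. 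So $r_Y=2$, and $t_Y=h_Y=s_Y=0$. Suppose $Y$ were also full. The leaf port at $Y$ (from $\Lambda_2$) would then be paired with its side companion, and that companion would have to be $y$. The mirror image of Step 1 would place an attachment of $\Lambda_2$ into a saturated tile whose $P$ corner is adjacent to the one forced in Step 1. This contradicts Lemma~\ref{lem:forced-blank}, in the same way as the repeated-companion argument of Lemma~\ref{lem:routing-consistency}. Therefore $Y$ is a partial capacity-two tile containing a slack port. In it, a leaf port is paired to a real port, namely $y$. The last sentence of Lemma~\ref{lem:routing-consistency} then says $Y$ has three real ports, two of them leaf ports, and one slack port; that is, $Y$ is slack-paid.

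\textbf{Main obstacle.} The hardest step is the case exclusion in Step 2 when $r_Y=1$ or $Y$ is full. Each exclusion relies on the forced-blank and attachment-direction constraints, and the normalizations must be tracked carefully, including reflections where $\Lambda_2$ enters $Y$ from below versus from the right. I would first enumerate the possible entry sides for $\Lambda_2$ relative to $y$. For each one I would identify the pair of saturated tiles whose adjacent $P$ corners give the contradiction.
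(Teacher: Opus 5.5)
Your skeleton matches the paper's: normalize the middle edge to $X.\mathrm{TR}$--$Y.\mathrm{TL}$ with $\Lambda_1$ above $X$, and show that the companion of $\Lambda_1$'s endpoint is an attachment forced right into the saturated tile above $Y$. Then use that tile to blank a corner of $Y$, exclude the full and capacity-one cases, and finish with Lemma~\ref{lem:routing-consistency}. However, Step~2 is not actually proved.

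First, pin the blank down exactly instead of hedging. The attachment enters the tile above $Y$ at its BL corner, directly above $y$. So that saturated tile has $T$ at TL and BR and $B$ at TR. Its BR corner is a $T$ already adjacent to its own $P$, which forces $Y.\mathrm{TR}=B$; ``not $T$'' would not suffice. With this, $Y$ is immediately not full, so your mirror-image argument is unnecessary. The $(P,P)$ option is also excluded outright. Your appeal to Lemma~\ref{lem:fork} for that option is not valid anyway: that lemma describes a capacity-one tile with two leaf neighbours. It says nothing against a full-$P$ tile with one leaf neighbour and one non-leaf neighbour. In the capacity-one case, the second real corner must therefore be $Y.\mathrm{BL}$, exiting down to $\Lambda_2$, and $Y.\mathrm{BR}\in\{T,s\}$.

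The genuine gap is the case $Y.\mathrm{BR}=s$. Your reason, that it would have to exit through a side adjacent to $X$ or to a saturated tile with $P$ corners, gives no contradiction. Its attachment simply goes right into the tile $W$ to the right of $Y$, and nothing established so far constrains $W$.

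The missing idea is to use the second leaf. The top-right corner of $\Lambda_2$ is the side companion of its endpoint, hence occupied. It faces the occupied $Y.\mathrm{BR}$, so it cannot be $T$ and must be an attachment, forced right. The two receiving $P$ corners are the BL corner of $W$ and the TL corner of the tile to the right of $\Lambda_2$. They are adjacent across a shared side, contradicting Lemma~\ref{lem:forced-blank}. The same occupied companion of $\Lambda_2$ is also the second occupied neighbour that rules out $Y.\mathrm{BR}=T$. Without this argument, the capacity-one exclusion, which you rightly flag as the main obstacle, remains unproved.
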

\begin{proof}
Normalize the middle edge to $Q.\mathrm{TR}$--$R.\mathrm{TL}$, with the
full tile $Q$ on the left. Its companion pairing makes its leaf port
$Q.\mathrm{TL}$ entering from above: the alternative bottom-right port
would receive its leaf from $R$, a positive-capacity tile. In the leaf
above $Q$, the bottom-right companion faces occupied $Q.\mathrm{TR}$,
so it is an attachment, forced right into the bottom-left corner of the
saturated tile above $R$. That saturated tile's bottom-right $T$ corner
forces $R.\mathrm{TR}$ blank. Thus $R$ is not full capacity two.

If $R$ has capacity one, its other real corner must be BL, exiting down
to its leaf. Its BR corner is $T$ or $s$. A $T$ there sees both its within-tile
BL endpoint and the occupied companion in the leaf below. If it is $s$,
its attachment exits right. The top-right companion in the leaf below
faces this occupied corner, so it too is $s$ and attaches right. The two
receiving saturated $P$ corners are adjacent, again impossible.
The remaining case is capacity two with slack, and
Lemma~\ref{lem:routing-consistency} makes it slack-paid.
\end{proof}

\begin{lemma}[Two capacity-one internal tiles]\label{lem:two-capacity-one}
If a three-edge $LL$ path has two capacity-one internal tiles, at least one
of them contains an internal matching edge.
\end{lemma}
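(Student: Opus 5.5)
We argue by contradiction. Take a three-edge $LL$ path $L_1\!-\!Q\!-\!R\!-\!L_2$ in which both internal tiles $Q,R$ have capacity one and neither contains an internal matching edge. By the classification just before Lemma~\ref{lem:escape}, each of $Q,R$ has two adjacent real $P$ corners, and its other two corners are either $\{T,B\}$ or an attachment $s$ together with its required blank $B$. The middle residual edge joins a real corner of $Q$ to a real corner of $R$ across a shared side. Normalize it as in Lemma~\ref{lem:escape}: $Q.\mathrm{TR}$--$R.\mathrm{TL}$, with $Q$ on the left. Each of $Q,R$ then has one further real corner, adjacent to its middle endpoint, and that corner is a leaf port leading to $L_1$ or $L_2$.

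\textbf{Step 1: placement of the leaf ports.} For $Q$, the second real corner is either $Q.\mathrm{TL}$, exiting up or left, or $Q.\mathrm{BR}$, exiting down. It cannot exit right, since that side faces $R$, which has positive capacity. The symmetric statement holds for $R$. In each case the side companion of the leaf port is either the other real corner or one of the two non-real corners. Lemma~\ref{lem:companion-exclusion} and the leaf shapes listed after Lemma~\ref{lem:capacities} restrict what that companion in the leaf can face.

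\textbf{Step 2: local contradiction in each configuration.} For each placement, we identify the corner of the leaf tile that faces an occupied corner of $Q$ or $R$. By the leaf shapes, that leaf corner cannot be $T$, because a $T$ would see its within-tile $c$ and the occupied neighbor. So it is an attachment $s$, forced to leave perpendicularly into a saturated tile. We then examine the non-real corners of $Q$ and $R$:
\begin{enumerate}[label=(\roman*)]
\item A $T$ corner adjacent to a real endpoint, and also adjacent to the occupied leaf companion or to a saturated $P$, violates feasibility.
\item An $s$ corner forces an attachment whose receiving saturated $P$ corner is side-adjacent to the receiving saturated $P$ corner of the leaf's forced attachment. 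This is excluded by Lemma~\ref{lem:forced-blank}.
\end{enumerate}
These are exactly the two arguments used in the capacity-one case of Lemma~\ref{lem:escape}. In addition, the saturated tile entered by $L_1$'s attachment forces a blank into $Q$ or $R$, in the same way that $R.\mathrm{TR}$ was forced blank there. This pins down which of $\{T,B\}$ or $\{s,B\}$ occupies each corner.

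\textbf{Step 3: enumerate and close.} By the horizontal reflection swapping $Q$ and $R$, we may fix the exit side of $Q$'s leaf port. That leaves four combinations of exits: $Q$ up or down, times $R$ up or down. Each combination has the $\{T,B\}$ or $\{s,B\}$ alternatives for both tiles. In every case, Step 2 yields a contradiction. Consequently at least one of $Q,R$ has its non-real corners forming an internally matched $P,P$ pair, which is the claim.

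The main obstacle is the case where both leaf ports exit to the same side, for example both up. There the two leaves may be adjacent or share a saturated neighbour, and Lemma~\ref{lem:leaf-exclusion} must be combined with the forced-attachment geometry. My first attempt is to show that the two facing leaf companions are both attachments leaving toward each other or into a common row of saturated tiles, so that Lemma~\ref{lem:forced-blank} applies directly.
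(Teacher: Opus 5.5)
Your setup (the contradiction hypothesis, the normalization $Q.\mathrm{TR}$--$R.\mathrm{TL}$, and the exit list in Step~1) agrees with the paper. From there, however, you give a template rather than a proof, and the case split in Step~3 is wrong. Step~1 yields exits $\{\text{left},\text{up},\text{down}\}$ at $Q$ and $\{\text{right},\text{up},\text{down}\}$ at $R$, so there are nine cases. The reflection swapping $Q$ and $R$ only pairs each case with its mirror; it cannot fix $Q$'s exit. Your ``$Q$ up or down times $R$ up or down'' therefore silently drops every case with a left exit at $Q$ or a right exit at $R$. Those are exactly the cases where the hypothesis ``no internal edge'' is used.

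With a left exit at $Q$, the corner $Q.\mathrm{BL}$ cannot be $T$: it would see $Q.\mathrm{TL}$ and the occupied companion in the left leaf. It also cannot be $s$, because that companion would then be an attachment forced down beside the downward attachment of $Q.\mathrm{BL}$, which is your item (ii). The local types allow this $s$ option, so it must be excluded explicitly; the paper's wording passes over it. Hence, without an internal edge, $Q.\mathrm{BL}=B$ and $Q.\mathrm{BR}\in\{T,s\}$.

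In the left/right case, the mirror statement at $R$ makes $Q.\mathrm{BR}$ and $R.\mathrm{BL}$ adjacent occupied corners. Neither can be $T$, so both are attachments forced down into side-adjacent saturated $P$ corners, contradicting Lemma~\ref{lem:forced-blank}. This clash is between attachments of $Q$ and $R$ themselves, and your item (ii), phrased through a leaf's attachment, does not cover it. In the left/down case, $Q.\mathrm{BR}$ sees $Q.\mathrm{TR}$ and $R.\mathrm{BL}$, so it is an attachment going down. The bottom-right $T$ of the receiving saturated tile then sees the occupied bottom-left corner of the leaf below $R$.

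The case you call the main obstacle is also left open, and the two-leaf interaction you propose to analyse is not what is needed. A single up exit is already impossible, whatever the other exit and whether or not internal edges are present. The first step of Lemma~\ref{lem:escape} uses only that both upper corners of $Q$ are occupied, and it forces $R.\mathrm{TR}=B$. Then $R$'s second real corner is $R.\mathrm{BL}$, exiting down, with $R.\mathrm{BR}\in\{T,s\}$, and the rest of that proof excludes both. Reflection handles an up exit at $R$. Down/down follows at once from Lemma~\ref{lem:leaf-exclusion}, since the leaf endpoints below $Q.\mathrm{BR}$ and $R.\mathrm{BL}$ are side-adjacent.

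Step~2 asserts a contradiction ``in every case'' without verifying any. With the correct nine-case split, the four arguments above (up exits, down/down, left/right, and left/down with its mirror) are what actually has to be written out.
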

\begin{proof}
Normalize its middle edge to $Q.\mathrm{TR}$--$R.\mathrm{TL}$. The possible
leaf exits are left or up from $Q.\mathrm{TL}$, or down from $Q.\mathrm{BR}$;
and right or up from $R.\mathrm{TR}$, or down from $R.\mathrm{BL}$.
An exit into the other internal tile cannot lead to a leaf. These give
nine cases.

An up exit at $Q$ is impossible by the geometric argument of
Lemma~\ref{lem:escape}: its first step needs only the two upper corners
of $Q$ occupied, and excludes capacity-one $R$. Reflection handles an up
exit at $R$. Down/down is impossible by Lemma~\ref{lem:leaf-exclusion},
since the two leaf endpoints immediately below the middle edge would be adjacent.

In the left/right case, suppose neither tile has an internal matching edge.
$Q.\mathrm{BL}$ cannot be $T$, since it sees $Q.\mathrm{TL}$ and an occupied
companion in the left leaf. The local types therefore give $Q.\mathrm{BL}=B$
and $Q.\mathrm{BR}$ equal to $T$ or $s$. Symmetrically $R.\mathrm{BR}=B$
and $R.\mathrm{BL}$ is $T$ or $s$. The two inner lower corners are adjacent
and occupied; neither can be $T$ as it already has a $P$ neighbor above.
Both are attachments forced down into adjacent saturated $P$ corners,
a contradiction.

In the left/down case, suppose $Q$ has no internal edge. The same first
argument gives $Q.\mathrm{BL}=B$ and $Q.\mathrm{BR}$ equal to $T$ or $s$.
Since it is adjacent to occupied $R.\mathrm{BL}$ as well as the $P$ above,
it must be $s$. Its downward attachment enters the top-right corner of
the saturated tile below $Q$. The bottom-right $T$ of that tile faces the
occupied bottom-left companion of the leaf below $R$, while already having
its own $P$ neighbor, a contradiction. Thus $Q$ has an internal edge.
Reflection proves the down/right case with an internal edge at $R$.
These cases exhaust the possibilities.
\end{proof}

\begin{lemma}[Injective short-path charges]\label{lem:short-charges}
If $f_3$ counts three-edge $LL$ paths, there is $z\ge0$ with
\[
f_2+f_3\le h+z,\qquad z\le c.
\]
\end{lemma}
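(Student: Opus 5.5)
We build an explicit charging map from short $LL$ paths into internal matching edges, plus a set of ``immediate'' one-edge $LH$ paths whose number is $z$. The charge has two parts.

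\emph{Two-edge paths.} Each length-two $LL$ path has a fork center, by Lemma~\ref{lem:routing-consistency} and Lemma~\ref{lem:fork}. Charge the path to the internal matching edge of that fork center. Distinct forks are distinct tiles, so this part is injective into the edges counted by $h$.

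\emph{Three-edge paths.} Take one with internal tiles $Q,R$ joined by its middle residual edge. Both internal tiles have positive capacity, since leaf tiles have residual degree one. Split into cases by the capacities.
\begin{itemize}
\item If one internal tile is full capacity two, Lemma~\ref{lem:escape} makes the other internal tile slack-paid. By Lemma~\ref{lem:routing-consistency}, that tile supplies a distinct immediate one-edge $LH$ path. Charge the three-edge path to it and count it in $z$.
\item If one tile has capacity one and the other is a partial capacity-two tile pairing a leaf port to a real port, Lemma~\ref{lem:routing-consistency} again says the latter is slack-paid. Charge the same way. A partial capacity-two tile that pairs its $LL$ leaf port to slack cannot be internal to an $LL$ path, so no further subcases arise.
\item If both internal tiles have capacity one, Lemma~\ref{lem:two-capacity-one} gives an internal matching edge in $Q$ or $R$. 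Charge to it, choosing by a fixed rule (say $Q$ first).
\end{itemize}
Setting $z$ to the number of distinct immediate $LH$ paths used gives $z\le c$ automatically. Then $f_2+f_3\le h+z$ follows once the charge is injective.

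The main obstacle is \textbf{injectivity} of the charge into internal edges.
\begin{itemize}
\item A capacity-one tile with an internal matching edge is a full $P$ tile with $t_Q=0$ and $h_Q=1$. Its two residual endpoints are adjacent, so it is the center of at most one path. That path is either a fork (length two) or a three-edge path through it. Hence each internal edge of a capacity-one tile is charged at most once across both the $f_2$ and the $f_3$ charges.
\item Internal edges of companion tiles $S$ in Lemma~\ref{lem:fork-compensation} are never charged, because the paths are charged only at their own internal tiles.
\item Each slack-paid tile carries exactly one $LL$ path through it (Lemma~\ref{lem:routing-consistency}). So its immediate $LH$ path is charged at most once, which also keeps $z\le c$.
\end{itemize}
The remaining thing to check is that a three-edge path cannot have both internal tiles using the same internal edge. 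This is immediate, because internal edges lie inside single tiles.
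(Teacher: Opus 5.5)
Your charging scheme is the paper's. Two-edge paths are charged to the internal edge of their fork center. A three-edge path is charged to an internal edge when both internal tiles have capacity one (Lemma~\ref{lem:two-capacity-one}), and otherwise to the immediate $LH$ path of a slack-paid tile (Lemmas~\ref{lem:escape} and~\ref{lem:routing-consistency}). Injectivity comes from the unique port pair at a capacity-one tile and from the single $LL$ path through a slack-paid tile.

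\textbf{Missing case.} Your case split is not exhaustive: it omits the case where both internal tiles are partial capacity-two tiles. The paper covers this with its ``otherwise'' clause. Every internal tile of an $LL$ path pairs a leaf port with a real port, since the middle edge goes to a positive-capacity tile. So by Lemma~\ref{lem:routing-consistency}, every partial capacity-two internal tile is slack-paid, and you may charge the path to either one.

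\textbf{Distinctness across tiles.} State explicitly, as the paper does, that two different slack-paid tiles cannot share an immediate $LH$ path. That one-edge path has its only slack terminal in its own tile. This is what makes the $z$ charges distinct across tiles, not merely within a single tile.

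With these two additions the argument is complete. In the mixed capacity-one/partial case you charge the $LH$ path even when an internal edge is available, whereas the paper prefers the internal edge; this difference is harmless.
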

\begin{proof}
Charge a two-edge path to the internal matching edge at its fork. For a
three-edge path use an internal matching edge at an internal tile if one
exists. If both internal tiles have capacity one, Lemma~\ref{lem:two-capacity-one}
ensures such an edge. Otherwise, Lemma~\ref{lem:escape} and the partial-tile
rule ensure that a path without such an edge uses a slack-paid tile; charge
it to that tile's immediate $LH$ path.

A tile with an internal edge on a residual route has capacity one and only
one port pair, so distinct paths cannot charge that internal edge. A slack-paid
tile has one leaf-slack pair and one other pair, so at most one $LL$ path
charges it. Its immediate $LH$ path ends at a zero-capacity leaf and cannot
be another slack-paid tile's immediate path. These charges are distinct;
take $z$ to be their number of $LH$ charges.
\end{proof}

\begin{proposition}[Final count]\label{prop:absorption-count}
The selected routing satisfies $D\ge5q+5b+c$, and in particular $D\ge5q$.
\end{proposition}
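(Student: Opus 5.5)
The plan is to combine the exact identities \eqref{eq:residual-identities}, \eqref{eq:attachment-count} and \eqref{eq:routing-counts} with a lower bound on the number $C$ of residual edges, obtained from the path lengths of the selected routing. Then I would absorb the short $LL$ paths using \eqref{eq:two-payment} and Lemma~\ref{lem:short-charges}. No new geometry should be needed; the content is bookkeeping.

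\emph{Step 1: bound $C$ by path lengths.} The local pairing of ports together with the residual edges decomposes the real residual edges into paths and cycles, and every residual edge lies in exactly one of them. Discarding the cycles and all $HH$ paths, which have nonnegative length, gives $C\ge\sum(\text{lengths of $LL$ and $LH$ paths})$. For the individual lengths:
\begin{itemize}
\item An $LH$ path has length at least one, since its leaf terminal is attached through a real residual edge.
\item An $LL$ path cannot have length one, because no residual edge joins two leaf tiles by Lemma~\ref{lem:leaf-exclusion}.
\end{itemize}
Let $f_2,f_3$ count the $LL$ paths of lengths two and three. Every other $LL$ path has length at least four, so
\[
C\ge 2f_2+3f_3+4(a-f_2-f_3)+c=4a-2f_2-f_3+c.
\]

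\emph{Step 2: substitute.} By \eqref{eq:residual-identities}, $D=C-q+K$. By \eqref{eq:attachment-count}, $K\ge h+\ell$, and by \eqref{eq:routing-counts}, $\ell=2a+c$ and $a=q+b$. Substituting these gives
\[
D\ge 6a-q+2c+h-2f_2-f_3=5q+6b+2c+h-2f_2-f_3.
\]

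\emph{Step 3: charge the short paths.} Lemma~\ref{lem:short-charges} gives $h-f_2-f_3\ge -z$, so
\[
D\ge 5q+6b-f_2+2c-z.
\]
Then \eqref{eq:two-payment} gives $f_2\le b$, and $z\le c$, which yields $D\ge5q+5b+c$. Since $b,c\ge0$, this also gives $D\ge5q$.

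\emph{Main obstacle.} The delicate point is Step 1. One must confirm that the routing really partitions the residual edges, with each real port used exactly once, so that every residual edge is counted at most once. One must also confirm that the lengths assigned to the terminal types are as claimed, especially that no $LL$ path of length one or zero survives. Both follow from the port construction, in which every real port is paired exactly once at a positive-capacity tile, together with Lemma~\ref{lem:leaf-exclusion}. All the genuinely geometric work is already contained in the earlier lemmas feeding \eqref{eq:two-payment} and Lemma~\ref{lem:short-charges}.
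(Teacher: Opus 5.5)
Your proposal is correct and follows the paper's proof essentially step for step. It uses the same path-length bound $C\ge4a-2f_2-f_3+c$, the same substitutions $D=C-q+K$, $K\ge h+\ell$, $\ell=2a+c$ and $a=q+b$, and the same closing use of Lemma~\ref{lem:short-charges}, \eqref{eq:two-payment} and $z\le c$, with your explicit appeal to Lemma~\ref{lem:leaf-exclusion} for excluding one-edge $LL$ paths merely spelling out what the paper states without citation.
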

\begin{proof}
There is no one-edge $LL$ path. Counting two edges for the $f_2$ paths, three
for the $f_3$ paths, at least four for every other $LL$ path, and at least
one per $LH$ path gives
\[
C\ge2f_2+3f_3+4(a-f_2-f_3)+c=4a-2f_2-f_3+c.
\]
The omitted $HH$ paths and cycles have nonnegative length. Use
\eqref{eq:residual-identities}, \eqref{eq:attachment-count} and
\eqref{eq:routing-counts} to obtain
\begin{align*}
D&=C-q+K\\
 &\ge4a-2f_2-f_3+c-q+h+2a+c\\
 &=5q+6b-2f_2-f_3+2c+h\\
 &\ge5q+6b-f_2+2c-z\\
 &\ge5q+5b+c.
\end{align*}
The last two steps use Lemma~\ref{lem:short-charges}, $f_2\le b$ and $z\le c$.
The count holds for every sign of $q$.
\end{proof}

\subsection{Objective-preserving fork removal}

\begin{proposition}\label{prop:fork-normalization}
Every feasible pair can be changed to a feasible pair $(T',M')$ with the
same objective, $|T'|\ge|T|$, and $|M'|\le|M|$, whose residual graph has
no capacity-one fork. The process terminates after at most the initial
number of matching edges internal to the fixed tiling.
\end{proposition}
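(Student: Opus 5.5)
The plan is to remove forks one at a time by a local modification supported on the fork and its companion tile from Lemma~\ref{lem:fork-compensation}. Each step strictly decreases the number of matching edges internal to tiles of the fixed $2\times2$ tiling. That gives the termination bound, and since each step keeps the objective and never lowers $|T|$, the monotonicity claims follow by induction. The structural input is Lemma~\ref{lem:fork}: a capacity-one fork $Q$ has $t_Q=0$ and all four corners in $P$. Orient it as in the proof of Lemma~\ref{lem:fork-compensation}: the two upper corners are residual endpoints exiting left and right into leaf tiles, and the lower row is an internal matching edge $e$. Both leaf companions are attachments forced down into saturated tiles, and the companion tile $S$ below $Q$ has lower half $BB$. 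Its upper half is either $BB$ or a single internal edge.

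\emph{Promotion.} Delete $e$ from $M$ and put one lower corner of $Q$, say BL, into $T$; the other lower corner becomes $B$. The objective is unchanged, $|T|$ grows by one and $|M|$ drops by one. For feasibility, the new $T$ vertex has occupied neighbors among the upper-left $P$ corner of $Q$, the corner of $S$ below it, and its left neighbor in the adjacent saturated tile.
\begin{itemize}
\item The left neighbor is the inward corner of a saturated tile that Lemma~\ref{lem:fork-compensation} shows is $T$. So the new $T$ vertex would see this $T$ vertex together with the $P$ corner above it, which gives two occupied neighbors. Hence we must instead promote the corner whose lateral neighbor is blank.
\item If both lateral neighbors are occupied, use the \emph{matching flip} instead. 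Replace $e$ and the two residual edges by the two vertical edges of $Q$, i.e.\ rematch within $Q$. This frees the leaf endpoints $c$, each of which we promote to $T$ or reroute.
\end{itemize}

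I expect the main obstacle to be this case analysis around the promotion. The task is to verify that old $T$ vertices adjacent to the modified corners (the saturated-tile $T$ corners, and $T$ corners in the leaves) never gain a second occupied neighbor. It must also be verified that the modification creates no new fork, or, failing that, that any new fork is charged to a strictly smaller internal-edge count.

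First I would settle whether the upper half of $S$ is $BB$ or matched. If it is $BB$, both lower corners of $Q$ have blank lower neighbors, and I would check the lateral neighbors directly using Lemma~\ref{lem:forced-blank}. If it is matched, I would combine that edge with the fork in the same promotion. Finally, because the modification only deletes internal edges or converts $P$ to $T$/$B$, the potential $h$ strictly decreases, which gives the termination claim.
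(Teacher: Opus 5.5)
\textbf{There is a genuine gap: both of your local moves fail.} Your overall scheme (remove forks one at a time, with the number of tile-internal matching edges as the potential) matches the paper's, but each move is aimed at the wrong tile.

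\emph{The promotion inside $Q$ never works.} In the oriented fork, each lower corner of $Q$ has two occupied neighbors that your move leaves untouched. One is the upper corner of $Q$ above it, which stays a residual $P$ endpoint. The other is its lateral neighbor. That neighbor lies in the adjacent \emph{leaf} tile, not in a saturated tile (the saturated tiles are diagonally below $Q$), and it is that leaf's side companion. The proof of Lemma~\ref{lem:fork-compensation} shows this companion is an attachment endpoint, hence occupied. So promoting either lower corner always gives it two occupied neighbors, and ``promote the corner whose lateral neighbor is blank'' never applies.

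\emph{The fallback flip fails on both counts.} The two vertical edges of $Q$ are internal to $Q$, so the internal-edge count rises from one to two, contradicting your claim that every step only deletes internal edges. The flip also deletes three matching edges and adds two. Each freed leaf endpoint $c$ is adjacent both to its leaf's attachment corner and to the still-occupied upper corner of $Q$, so it cannot enter $T$, and the objective drops by one. ``Reroute'' is not a specified step.

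\textbf{The missing idea is to act in the companion tile $S$ below $Q$,} whose lower half is $BB$.
\begin{itemize}
\item If $S$ is entirely blank, delete $e$ and promote an \emph{upper corner of $S$} to $T$. Its neighbor above has just become blank, and its two neighbors inside $S$ are blank. Its only occupied neighbor is therefore the lateral $P$ corner of the saturated tile, which is not in $T$. No old $T$ vertex gains a neighbor, $|T|$ rises by one and $|M|$ falls by one.
\item If the upper half of $S$ is a matching edge, that edge and $e$ span a $2\times2$ square straddling the interface between $Q$ and $S$. Replace them by that square's two vertical edges. The occupied set is unchanged, so feasibility and the objective are preserved. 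Because the new edges cross a tile boundary, the internal-edge count drops by two.
\end{itemize}
In both cases the internal-edge count strictly decreases. That alone gives termination and the stated bound, so you never need to show that no new forks are created.
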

\begin{proof}
Orient a fork $Q$ as in Lemma~\ref{lem:fork-compensation}, with its internal
edge in the lower row and its companion $S$ below. If $S$ is all blank,
remove that internal edge and its endpoints, and promote one upper corner
of $S$ to $T$. The promoted vertex has exactly one occupied neighbor, the
lateral $P$ in a saturated tile; its neighbor above was removed and its
other two neighbors in $S$ are blank. It has no $T$ neighbor. Thus all old
$T$ constraints are preserved, the objective is unchanged, $|T|$ increases
by one and $|M|$ decreases by one.

If $S$ has an internal edge in its upper row, the two internal edges of
$Q$ and $S$ form a square across the tile interface. Replace them by the
two vertical edges of that square. Occupied vertices, feasibility and
objective are unchanged. In either case the number of matching edges
internal to the fixed tiling strictly decreases, by one or two. Repetition
therefore terminates, with the claimed properties. This removes forks,
not all longer leaf-to-leaf paths.
\end{proof}

\subsection{Long corridors and compensation between components}

For a residual component $J$, define its excess by
$e(J)=|E(J)|-\sum_{Q\in V(J)}r_Q$. Equation~\eqref{eq:residual-identities}
gives $q=\sum_J e(J)$, including isolated deficient tiles. The stronger
componentwise assertion $e(J)\le0$ can fail even without forks.

\begin{proposition}[Corridor replacement]\label{prop:corridor}
For every $k\ge2$ there is a feasible pair with no capacity-one fork whose
residual graph has an $LL$ path of length $k+1$ and excess one.
In the construction below, a local replacement preserves $|T|+|M|$ and
removes $k$ internal matching edges. The replacement remains valid when
the same aligned pattern occurs inside a larger feasible rectangle.
\end{proposition}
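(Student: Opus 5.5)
The proof is an explicit construction followed by a local check. The chain structure and the excess count are easy. The delicate part is the replacement at the corridor ends; it is only sketched here and not yet verified.

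\textbf{The corridor pair.} Fix $k\ge2$ and place $k$ aligned tiles $Q_1,\ldots,Q_k$ side by side in one horizontal band of the tiling.
\begin{itemize}
\item Each $Q_j$ will be a full $P$ tile of capacity one: its lower two corners form one internal matching edge, and its upper two corners are residual endpoints.
\item In the upper row, the right corner of $Q_j$ is matched horizontally to the left corner of $Q_{j+1}$. This gives $k-1$ residual edges between consecutive corridor tiles.
\item At the two ends, the outer upper corners of $Q_1$ and $Q_k$ are matched horizontally into leaf tiles $Q_0$ and $Q_{k+1}$. These have the leaf form $\bigl(\begin{smallmatrix}T&c\\B&s\end{smallmatrix}\bigr)$, suitably reflected, with $c$ the residual endpoint.
\item The attachment $s$ of each leaf leaves through the side containing its $B$ corner, into a saturated tile with diagonal $T$ corners, one $P$ and one $B$, as the forced-blank and leaf-form descriptions require.
\item All remaining tiles, in particular the bands directly above and below the corridor, are left blank. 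This keeps every $T$ vertex with at most one occupied neighbour.
\end{itemize}
This fits in a rectangle with even sides, e.g.\ height $8$ and width $2(k+4)$, and also embeds in any larger even rectangle with blank padding.

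\textbf{Feasibility and the residual graph.} Feasibility is checked corner by corner: only the $T$ corners of the two leaves and of the two receiving saturated tiles have occupied neighbours, and each has exactly one. The residual graph is then the path $Q_0Q_1\cdots Q_kQ_{k+1}$. It has $k+1$ edges, capacities $r_{Q_j}=1$ for $1\le j\le k$ and $r=0$ at the leaves, so its excess is $e=(k+1)-k=1$. No corridor tile has two leaf neighbours because $k\ge2$, so by Lemma~\ref{lem:fork} there is no capacity-one fork.

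\textbf{The local replacement.} The goal is to keep the objective $|T|+|M|$ and remove the $k$ internal lower-row edges.
\begin{itemize}
\item Re-pair the lower row of the corridor with edges crossing tile boundaries, as in the upper row. Matching lower-right of $Q_j$ to lower-left of $Q_{j+1}$ gives $k-1$ non-internal edges.
\item This frees the lower-left corner of $Q_1$ and the lower-right corner of $Q_k$, and the objective has dropped by exactly one.
\item To recover that unit, I would move one leaf: change the attachment of $Q_0$ (or the saturated tile it enters) so that a freed end corner, or a vacated corner nearby, can be promoted to $T$ with one occupied neighbour. Alternatively I would route a single new edge from a freed end corner into the adjacent blank band.
\item The total count of removed internal edges must come out to exactly $k$. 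I would recount it after fixing the end modification, since a leaf change that alters another tile-internal edge would shift this count.
\end{itemize}
Every changed vertex lies inside the corridor or in the tiles next to it, and every surrounding neighbour is blank. So validity inside any larger feasible rectangle reduces to checking that the boundary band of the pattern stays blank, which I would state as the hypothesis on the aligned pattern.

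\textbf{Main obstacle.} The main obstacle is the end bookkeeping in the replacement. At each end, a freed corner next to the leaf's occupied attachment corner $s$ cannot be promoted to $T$ naively, because it would then see both $s$ and its $P$ neighbour above. I would first try to change the leaf type at one end to $\bigl(\begin{smallmatrix}B&s\\s&c\end{smallmatrix}\bigr)$ or to push the attachment one tile outward, and verify the $T$-degree conditions by a direct case check on the few affected corners. If that fails, I would fall back to a slightly longer explicit pattern with an extra blank tile at each end, chosen so that a promotion becomes legal.
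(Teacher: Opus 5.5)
Your corridor pair is correct and is essentially the paper's pattern. It has two leaves of type $T,c$ over $B,s$ whose attachments go down into saturated tiles, and $k$ full $P$ tiles each carrying one internal edge in its lower row. The upper-row residual edges form an $LL$ path with $k+1$ edges and capacity $k$, so the excess is one and there is no fork when $k\ge2$.

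\textbf{The gap.} The second half of the proposition is where its content lies, and you never fix a replacement. So you have not shown that $|T|+|M|$ is preserved, that exactly $k$ internal edges disappear, or that the move stays valid inside a larger rectangle. Your search also looks in the wrong place: the leaves need not change, and the lower row need not be re-paired.

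\textbf{The missing idea.} Use the blank tiles directly below the corridor. Delete all $k$ lower-row internal edges and unoccupy their endpoints. Then put into $T$ the vertex directly below the left endpoint of each deleted edge. In the paper's coordinates (rows $0$--$3$, corridor in rows $0,1$), these vertices are $(2,2i)$ for $1\le i\le k$.
\begin{itemize}
\item Each such vertex has blank neighbours above (just vacated), to the right and below.
\item Its left neighbour is blank except when $i=1$, where it is the $P$ corner $(2,1)$ of the saturated tile under the left leaf.
\item The new vertices are pairwise at distance two and adjacent to no old $T$ vertex.
\end{itemize}
Hence the pair stays feasible, $|T|$ rises by $k$, $|M|$ drops by $k$, and exactly $k$ internal edges disappear.

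\textbf{Repairing your own scheme.} The same observation works there. After you re-pair the lower row into $k-1$ boundary-crossing edges, look at the vertex directly below the freed lower-left corner of $Q_1$. It has exactly one occupied neighbour, the saturated $P$ corner, and no $T$ neighbour. Promoting it to $T$ restores the lost unit. Joining it to the freed corner by a vertical edge also works, since that edge is not internal to a tile.

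\textbf{The ambient claim.} You do not need a blank-boundary hypothesis. The new $T$ vertices lie strictly inside the patch, so all their neighbours are inside it and are controlled by the pattern. Every other change only deletes occupied vertices. Deletions can only help the constraints of old $T$ vertices, whether inside or outside the patch.
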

\begin{proof}
Use local rows $0,1,2,3$ and columns $0,\ldots,N-1$, where $N=2k+4$.
Put
\[
T=\{(0,0),(0,N-1),(2,0),(3,1),(2,N-1),(3,N-2)\}.
\]
Take the matching edges
\begin{align*}
&\{(0,2i+1),(0,2i+2)\} &&(0\le i\le k),\\
&\{(1,2i),(1,2i+1)\} &&(1\le i\le k),
\end{align*}
together with $\{(1,1),(2,1)\}$ and $\{(1,N-2),(2,N-2)\}$.
All other vertices are blank. Each $T$ vertex has one occupied neighbor,
and the edges have disjoint endpoints, so the pair is feasible.

The upper outer tiles are zero-capacity leaves, attached to the lower
outer saturated tiles. Each of the $k$ upper inner tiles has capacity one
and one internal matching edge. The residual path therefore has $k+1$
edges and total capacity $k$, with no fork for $k\ge2$. The $k$ lower
inner tiles are isolated and each has capacity two. Thus the total excess
is $1-2k$, although the path component has excess one.

Delete the $k$ matching edges in row 1 and their endpoints, and add
$(2,2i)$ to $T$ for $1\le i\le k$. Each new $T$ vertex has blank neighbors
above, below and to its right, and at most one occupied neighbor to its
left. None is adjacent to an old $T$ vertex or lies on the patch boundary.
Deleting occupied vertices helps all old $T$ constraints, including those
outside the patch. Hence feasibility is preserved in any ambient rectangle.
The objective changes by $k-k=0$, and the number of internal matching
edges decreases by $k$. In the standalone construction, the upper component
now has capacity $2k$ and excess $1-k\le0$.
\end{proof}

This replacement extends the blank-companion fork move to arbitrarily long
straight corridors. For a general residual path the neighboring band can
be occupied, so the remaining compensation problem must retain that geometry
and allow resources in other residual components.